\global\long\def\argmin{\operatorname*{argmin}}%
\definecolor{green}{rgb}{0,0.6,0.0}
\newtheorem{thm}{Theorem}[section]
\newtheorem{theorem}[thm]{Theorem}
\newtheorem{lemma}[thm]{Lemma}
\newtheorem{proposition}[thm]{Proposition}
\newcommand{\beq}{\begin{equation}}
\newcommand{\eeq}{\end{equation}}
\newcommand{\beqa}{\begin{eqnarray}}
\newcommand{\eeqa}{\end{eqnarray}}
\newcommand{\beqas}{\begin{eqnarray*}}
\newcommand{\eeqas}{\end{eqnarray*}}
\newcommand{\ei}{\end{itemize}}
\newcommand{\vgap}{\vspace{.1in}}
\newcommand{\R}{\mathbb{R}}
\newcommand{\lam}{{\lambda}}
\newcommand{\inner}[2]{\langle #1,#2\rangle}
\newcommand{\inte}{\mathrm{int}\,}
\newcommand{\dom}{\mathrm{dom}\,}
\newcommand{\bConv}[1]{\mbox{\rm C}\overline{\mbox{\rm onv}}\,(\Re^{#1})}
\newcommand{\tx}{\tilde x}
\global\long\def\tx{\tilde{x}}%
\global\long\def\rn{\Re^{n}}%
\global\long\def\cal{\mathcal}%
\global\long\def\R{\Re}%
\global\long\def\r{\Re}%
\global\long\def\n{\mathbb{N}}%
\global\long\def\pt{\mathbb{\partial}}%
\global\long\def\lam{\lambda}%
\global\long\def\argmin{\operatorname*{argmin}}%
\global\long\def\dom{\operatorname*{dom}}%
\global\long\def\inner#1#2{\langle#1,#2\rangle}%
\global\long\def\trc{\operatorname*{tr}}%
\global\long\def\cConv{\overline{{\rm Conv}}\ }%
\global\long\def\bConv{\overline{\text{Conv}}(\rn)}%
\normalfont\fontsize{14}{15}\bfseries}{\thesection}{1em}{}
\normalfont\fontsize{12}{15}\bfseries}{\thesubsection}{1em}{}
\title{An adaptive superfast inexact proximal \\
augmented Lagrangian method for smooth \\
nonconvex composite optimization problems}
\author{Arnesh Sujanani\thanks{School of Industrial and Systems Engineering, Georgia Institute of Technology, Atlanta, GA, 30332-0205. (Email: {\tt asujanani6@gatech.edu} \& {\tt monteiro@isye.gatech.edu}). These authors were partially supported by AFORS Grant FA9550-22-1-0088.} \and 
  Renato D.C. Monteiro\footnotemark[1]}
\date{July 24, 2022\\
1st revision: October 3, 2022
}
\begin{document}
\maketitle
\begin{abstract}
This work presents an adaptive superfast proximal augmented Lagrangian (AS-PAL) method for solving linearly-constrained smooth nonconvex composite optimization problems. Each iteration of AS-PAL inexactly solves a possibly nonconvex proximal augmented Lagrangian
(AL) subproblem obtained by
an aggressive/adaptive choice of prox stepsize
with the aim of substantially improving its computational performance followed by a full Lagrangian multiplier update.
A major advantage of AS-PAL compared to other AL methods is that
it requires no knowledge
of parameters (e.g., size of constraint matrix, objective function curvatures, etc) associated
with the optimization problem,
due to its adaptive nature
not only in choosing the prox stepsize
but also in using a crucial
adaptive accelerated composite gradient variant to solve the proximal AL subproblems.
The speed and efficiency of AS-PAL is demonstrated through extensive computational experiments showing that it can solve many instances more than ten times faster than
other state-of-the-art
penalty and AL methods,
particularly when high accuracy
is required.
\end{abstract}
\section{Introduction} \label{sec:intro}
The main goal of this paper 
is to present the theoretical analysis and the excellent computational performance of an
adaptive superfast proximal augmented Lagrangian method,
referred to as AS-PAL,
for solving
the linearly-constrained smooth nonconvex composite optimization (SNCO) problem
	\begin{equation} \label{eq:main_prb}
	\phi^*:=\min \{ \phi(z) :=  f(z) + h(z) : A z  =b\},
	\end{equation}
	where $A:\Re^n \to \Re^l$ is a linear operator, $b\in\Re^l$,
	 $h:\Re^n \to (-\infty,\infty]$ is a closed proper convex function which
	 is $M_h$-Lipschitz continuous on its compact domain, and
	$f:\Re^n \to \Re$ is a real-valued differentiable
	nonconvex function which is
	$m_f$-weakly convex
	and whose
	gradient is $L_f$--Lipschitz continuous.
	AS-PAL is essentially an adaptive
	version of the IAIPAL method and the NL-IAIPAL method
	studied in \cite{NL-IPAL, RenWilmelo2020iteration},
	but, in contrast to these methods, it does not require knowledge of the above parameters $m_f$, $L_f$, and $M_h$.

An iteration of AS-PAL
has a similar pattern to
the ones of the methods in \cite{RenWilmelo2020iteration, NL-IPAL} and is
also based on the augmented Lagrangian (AL) function ${\cal L}_c(z;p)$ defined as
\begin{equation}\label{lagrangian2}
{\cal L}_c(z;p):=f(z)+h(z)+\left\langle p,Az-b\right\rangle+\frac{c}{2}\|Az-b\|^2.
\end{equation}
More specifically, its rough description is as follows:
given $(z_{k-1},p_{k-1}) \in {\cal H} \times \Re^l$ and
a pair of positive scalars $(\lam_k,c_k)$, it computes $z_k$ as a suitable approximate solution of the possibly nonconvex proximal subproblem
\begin{equation}\label{eq:approx_primal_update}
\min_{u}\left\{ \lam_k {\cal L}_{c_k}(u;p_{k-1})+\frac{1}{2}\|u-z_{k-1}\|^{2}\right\},
\end{equation}
and $p_k$ according to the full Lagrange multiplier update
\begin{equation}\label{eq:dual_update}
p_{k}=p_{k-1}+c_{k}(Az_{k}-b).
\end{equation}
Based on the fact that \eqref{eq:approx_primal_update} is strongly convex whenever the prox stepsize $\lambda_k$
is chosen in $(0,1/m_f)$, the methods of \cite{RenWilmelo2020iteration, NL-IPAL} set $\lambda_k=0.5/m_f$ for every $k$ and solve each strongly-convex subproblem using an accelerated composite gradient (ACG) method (see \cite{fistaReport2021, MontSvaiter_fista}).

{\bf Our contributions:}
Since it is empirically observed
that the larger $\lam_k$ is, the faster the procedure outlined above in \eqref{eq:approx_primal_update}-\eqref{eq:dual_update} approaches
a desired approximate solution of \eqref{eq:main_prb},
AS-PAL adaptively chooses the prox stepsize $\lam_k$ to be a scalar
which 
is usually much larger than $0.5/m_f$.
As \eqref{eq:approx_primal_update} may become nonconvex with such a choice of $\lam_k$, a standard ACG method
applied to \eqref{eq:approx_primal_update}
may fail to obtain a desirable
approximate solution of \eqref{eq:approx_primal_update}.
To remedy this situation, AS-PAL uses a new adaptive ACG method for solving \eqref{eq:approx_primal_update} which accounts for the fact that \eqref{eq:approx_primal_update}
may be nonconvex and the Lipschitz constant of the objective function of \eqref{eq:approx_primal_update} may be unknown.
Thus, in contrast to the methods of \cite{RenWilmelo2020iteration, NL-IPAL},
AS-PAL has the interesting feature
of requiring no knowledge of the parameters $m_f$, $L_f$ and $M_h$ underlying \eqref{eq:approx_primal_update}
in view of its ability to
adaptively generate the prox stepsize $\lam_k$ and
the
estimate of the Lipschitz constant of the objective function of \eqref{eq:approx_primal_update} within the adaptive ACG method.
Moreover, as was shown for
the method of \cite{NL-IPAL}, under the assumption that a Slater point exists,
it is also shown 
that, 
for any given tolerance pair $(\hat \rho,\hat \eta) \in \Re^2_{++}$, AS-PAL finds a $(\hat \rho, \hat \eta)$-approximate stationary solution of \eqref{eq:approx_primal_update}, i.e., a triple $(z,p,w)$  
satisfying
\begin{align}\label{eq:approx_stationary1'}
	w\in\nabla f(z)+\partial h(z)+A^*p ,\quad
\|w\|\leq \hat \rho,\quad  \|Az-b\|\leq \hat \eta,
\end{align}
in at most $\cal O(\hat \eta^{-1/2} \hat \rho^{-2}+\hat \rho^{-3})$ iterations (up to logarithmic terms). Finally, a major advantage of AS-PAL is that it substantially improves the computational performance of the methods in \cite{NL-IPAL, RenWilmelo2020iteration}, whose performance was already substantially better than other existing methods for solving \eqref{eq:main_prb}. Our extensive computational results of section~\ref{sec:numerical} show that AS-PAL can efficiently compute highly accurate solutions for all problems tested, while the other methods can fail to do so in many of these problems. AS-PAL can often find such solutions in just a few seconds or minutes while all the other methods may take several hours to do so.

\par {\bf Literature review.}
We only focus on relatively recent papers dealing with the iteration complexity of augmented Lagrangian (AL) type methods.
In the convex setting, AL-based methods have been widely studied for example in \cite{LanRen2013PenMet, Aybatpenalty, IterComplConicprog, AybatAugLag, LanMonteiroAugLag, ShiqiaMaAugLag16, zhaosongAugLag18, Patrascu2017, YangyangAugLag17}. 


We now discuss AL type methods in the nonconvex setting of \eqref{eq:main_prb}.
Various 
proximal AL methods for solving both linearly and
nonlinearly constrained SNCO problems
have been studied in \cite{HongPertAugLag, AIDAL, RJWIPAAL2020, NL-IPAL, RenWilmelo2020iteration, YinMoreau, ErrorBoundJzhang-ZQLuo2020, ADMMJzhang-ZQLuo2020, DualDescent}.
More specifically, \cite{AIDAL, HongPertAugLag,RJWIPAAL2020} present proximal AL methods based on a perturbed augmented Lagrangian function and an under-relaxed multiplier update. Papers \cite{RenWilmelo2020iteration, NL-IPAL} present an accelerated proximal AL method based off the classical augmented Lagrangian function and a full multiplier update. The method in \cite{DualDescent} is an AL-based method which reverses the direction of the multiplier update. Papers \cite{YinMoreau, ErrorBoundJzhang-ZQLuo2020, ADMMJzhang-ZQLuo2020} study AL type variants based on the Moreau envelope. Finally,
non-proximal AL methods for solving SNCO problems
are studied in \cite{ImprovedShrinkingALM20, inexactAugLag19}.

We now discuss papers that are tangentially related to this work. Penalty methods for SNCO problems have been studied in \cite{WJRproxmet1, WJRComputQPAIPP, MinMax-RenWilliam, Penalty2.5Xu}. It is worth mentioning that AS-PAL extends the methods in \cite{RenWilmelo2020iteration, NL-IPAL} by allowing for an adaptive prox stepsize, similar to the way the method of \cite{WJRComputQPAIPP} extends the one in \cite{WJRproxmet1}. Finally, paper \cite{SZhang-Pen-admm} studies a penalty-ADMM method that solves an equivalent reformulation of \eqref{eq:main_prb} 
while the paper \cite{HybridPenaltyAugLag19} presents an inexact proximal point method applied to the function defined as $\phi(z)$ if $z$ is feasible and $+\infty$ otherwise.

\par {\bf Organization of the paper.} The paper
is laid out as follows. Subsection~\ref{subsec:notation} presents basic definitions and notation used throughout the paper. Section~\ref{sec:aug_lagr} contains two subsections. The first describes the problem of interest and the assumptions made on it. The second formally states the AS-PAL method and its main complexity result. Section~\ref{sec:proofs of main proposition} is dedicated to proving the main complexity result. Section~\ref{sec:numerical} presents extensive computational experiments which demonstrate the efficiency of AS-PAL. The Appendix contains two subsections. Appendix~\ref{sec:acg} presents the ADAP-FISTA algorithm which is used to solve possibly nonconvex unconstrained subproblems while Appendix~\ref{technical lagrange multiplier} presents technical results which are used to prove that the sequence of the Lagrange multipliers generated by AS-PAL is bounded.

\subsection{Basic Definitions and Notations} \label{subsec:notation}

This subsection presents notation and basic definitions used in this paper.
	
Let $\Re_{+}$  and $\Re_{++}$ denote the
set of nonnegative and positive
real numbers, respectively. We denote by  $\Re^n$  an $n$-dimensional inner product space with inner product and associated norm denoted by $\inner{\cdot}{\cdot}$ and $\|\cdot\|$, respectively. We use $\Re^{l\times n}$ to denote the set of all $l\times n$ matrices and ${\mathbb S}_n^+$ to denote the set of positive semidefinite matrices in $\r^{n\times n}$.  The smallest positive singular value of a  nonzero linear operator $Q:\Re^n\to \Re^l$ is denoted by $\nu^+_Q$. For a given closed convex set $Z \subset \Re^n$, its boundary is denoted by $\partial Z$ and the distance of a point $z \in \Re^n$  to $Z$ is denoted by ${\rm dist}(z,Z)$. The indicator function of $Z$, denoted by $\delta_Z$, is defined by $\delta_Z(z)=0$ if $z\in Z$, and $\delta_Z(z)=+\infty$ otherwise.
 For any $t>0$ and $b\geq 0$, we let $\log_b^+(t):=\max\{\log t, b\}$, and we  define ${\cal O}_1(\cdot) = {\cal O}(1 + \cdot)$.

The domain of a function $h :\Re^n\to (-\infty,\infty]$ is the set $\dom h := \{x\in \Re^n : h(x) < +\infty\}$.
Moreover, $h$ is said to be proper if
$\dom h \ne \emptyset$. The set of all lower semi-continuous proper convex functions defined in $\Re^n$ is denoted by $\cConv \rn$. The $\varepsilon$-subdifferential of a proper function $h :\Re^n\to (-\infty,\infty]$ is defined by 
\begin{equation}\label{def:epsSubdiff}
\partial_\varepsilon h(z):=\{u\in \Re^n: h(z')\geq h(z)+\inner{u}{z'-z}-\varepsilon, \quad \forall z' \in \Re^n\}
\end{equation}
for every $z\in \Re^n$.	The classical subdifferential, denoted by $\partial h(\cdot)$,  corresponds to $\partial_0 h(\cdot)$.  
Recall that, for a given $\varepsilon\geq 0$, the $\varepsilon$-normal cone of a closed convex set $C$ at $z\in C$, denoted by  $N^{\varepsilon}_C(z)$, is 
$$N^{\varepsilon}_C(z):=\{\xi \in \Re^n: \inner{\xi}{u-z}\leq \varepsilon,\quad \forall u\in C\}.$$
The normal cone of a closed convex set $C$ at $z\in C$ is denoted by  $N_C(z)=N^0_C(z)$.
If $\psi$ is a real-valued function which
is differentiable at $\bar z \in \Re^n$, then its affine   approximation $\ell_\psi(\cdot,\bar z)$ at $\bar z$ is given by
\begin{equation}\label{eq:defell}
\ell_\psi(z;\bar z) :=  \psi(\bar z) + \inner{\nabla \psi(\bar z)}{z-\bar z} \quad \forall  z \in \Re^n.
\end{equation}

\section{The AS-PAL method} \label{sec:aug_lagr}

This section consists of two subsections. The first one precisely describes the problem of interest and its assumptions. The second one motivates and states the AS-PAL method and presents its main complexity result.

\subsection{Problem of Interest} \label{subsec:prb_of_interest}

This subsection presents the main problem of interest and discusses the assumptions underlying it.

Consider problem \eqref{eq:main_prb}
where $A: \Re^{n} \rightarrow \Re^{l}$, $b \in \Re^{l}$ and
functions $f,h: \Re^{n} \rightarrow (-\infty,\infty]$ satisfy the following
assumptions:
\begin{itemize}
\item[{\bf(A1)}] $h\in \bConv$ is $M_h$-Lipschitz continuous on ${\mathcal H}:=\dom h$
and the diameter
\[
D_h:=\sup \{ \|z-z'\| : z, z' \in {\mathcal H} \}
\]
of ${\mathcal H}$ is finite;
\item[{\bf(A2)}] $A$ is a nonzero linear operator and there exists $\bar z \in \inte({\mathcal H})$ such that $A\bar z=b$;
\item[{\bf(A3)}] $f$ is nonconvex and  differentiable on $\Re^{n}$, and there exists $L_f\geq m_f > 0$ such that for all $z,z' \in \R^n$,
\begin{align}
\|\nabla f(z') -  \nabla f(z)\| &\le L_f \|z'-z\|\label{gradLips},\\
f(z') -\ell_f(z';z) &\ge - \frac{m_f}2 \|z'-z \|^2.\label{lowerCurvature-m}
\end{align}
\end{itemize} 




\subsection{The AS-PAL method}\label{sec:ASPAL}

This subsection  motivates and states the AS-PAL method and presents its main complexity result.  

Before presenting the method, we give a short but precise outline of its key steps as well as a description of how its iterates are generated.
Recall from the introduction that the AS-PAL method,
whose goal is to find a $(\hat \rho, \hat \eta)$-approximate stationary solution as in \eqref{eq:approx_stationary1'}, is an iterative method which, at its $k$-th step, computes its next iterate $(z_k, p_k)$ according to \eqref{eq:approx_primal_update} and \eqref{eq:dual_update}. We are now ready to provide a complete description of the AS-PAL method.

\noindent\begin{minipage}[t]{1\columnwidth}%
\rule[0.5ex]{1\columnwidth}{1pt}

\noindent \textbf{AS-PAL Method}

\noindent \rule[0.5ex]{1\columnwidth}{1pt}%
\end{minipage}

\noindent \textbf{Input}: functions $(f,h)$, scalars
$\sigma\in (0,1/2)$, $\chi \in (0,1)$, and $\beta>1$, an initial $\bar{\lambda}>0$, an initial
point $z_0 \in {\cal H}$, $p_0=0$, a penalty parameter $c_1>0$, and a tolerance pair $(\hat{\rho},\hat{\eta})\in\r_{++}^{2}$.

\noindent \textbf{Output}: a triple $(z,p,w)$ satisfying \eqref{eq:approx_stationary1'}.

\begin{itemize}
\item[{\bf 0.}] set  $\hat k=1$,  $k=1$, and
\begin{equation}\label{def:lamb-C1}
\lambda=\overline{\lambda}, \quad C_{\sigma}= \frac{2(1-\sigma)^2}{1-2\sigma};
\end{equation}

\item[{\bf 1.}]  let $M^k_0 \in [1,\overline \lambda(L_f+ c_k\|A\|^2)+1]$ and call the ADAP-FISTA method described in Appendix~\ref{sec:acg}
with inputs 
\begin{align}
x_0&=z_{k-1}, \quad
(\mu,L_0,\chi,\beta,\sigma)= (1/2,M_0^k,\chi,\beta,\sigma)\label{eq:Ms-mu}, \\
\psi_s &=\lam [\mathcal{L}_{c_k}(\cdot,p_{k-1})-h] +\frac{1}{2}\|\cdot-z_{k-1}\|^2 , \quad \psi_n = \lam h \label{eq:psiS-psimu};
\end{align}
\item[{\bf 2.}]
if ADAP-FISTA fails or its output 
$(z,u)$ (if it succeeds) does not satisfy the inequality \begin{equation}\label{subdiff ineq check}
	   \lambda{\cal L}_{c_k}(z_{k-1},p_{k-1}) - \left [ \lambda{\cal L}_{c_k}(z,p_{k-1}) + \frac{1}{2} \|z-z_{k-1}\|^2 \right ] \ge \inner{u}{z_{k-1}-z},
	   \end{equation}
then set $\lam=\lam/2$ and go to step $1$; else, set
	$(z_{k},u_{k})=(z,u)$,
	$\lam_k=\lam$,
	and 
	\begin{align}
	{w}_{k}:&=\frac{u_k+z_{k-1}-z_{k}}{\lambda_k},\label{eq:w_k_def}\\
	p_{k}:&=p_{k-1}+c_{k}(Az_k-b)\label{eq:dual_update2},
	\end{align}
	and
	go to step~3;
\item[{\bf 3.}]	
if $\|w_k\|\leq \hat \rho$ and $\|Az_k-b\|\leq \hat \eta$,
then stop with success and output $(z,p,w)=(z_k,p_k,w_k)$; else, go to step~4;

\item[{\bf 4.}]
if $k\geq \hat k+1 $ and 
\begin{equation}\label{eq:stopCOndition-deltak}
\Delta_k:=\frac{1}{\sum_{i=\hat k+1}^{k}\lambda_i}\left[\mathcal{L}_{c_k}(z_{\hat k},p_{\hat k-1})-\mathcal{L}_{c_k}(z_k,p_k) - \frac{\|p_k\|^2}{2c_k}\right] \le
\max\left\{\frac{\sum_{i=\hat k+1}^{k}\lam_{i}\|w_{i}\|^2}{2C_{\sigma}\sum_{i=\hat k+1}^{k}\lambda_i},\frac{\hat \rho^2}{2C_\sigma}\right\},
    \end{equation}
then set $c_{k+1}=2 c_k$ and $\hat k=k+1$;
otherwise, set $c_{k+1}=c_k$;
\item[{\bf 5.}] 
set $k\gets k+1$ and go to step~1. 
\end{itemize}
\noindent \rule[0.5ex]{1\columnwidth}{1pt}

AS-PAL makes two types of iterations, namely, the outer iterations indexed by $k$ and the ACG iterations performed during its calls to the ADAP-FISTA method in step 1.

We now make some remarks about AS-PAL. First, it follows from Proposition~\ref{prop:nest_complex1} (see Appendix~\ref{sec:acg}) that the total number of resolvent evaluations \footnote{A resolvent evaluation of $h$ is an evaluation of $(I+\gamma \partial h)^{-1}(\cdot)$ for some $\gamma>0$.} made by ADAP-FISTA is on the same order of magnitude as its total number of ACG iterations. Second, noting that
the sum of
the functions $\psi_s$ and $\psi_n$ 
in \eqref{eq:psiS-psimu} is equal
to the objective function of \eqref{eq:approx_primal_update},
it follows from
Proposition~\ref{prop:nest_complex1} in Appendix~\ref{sec:acg} that the pair $(z_k,u_k)$ in step 2 of AS-PAL is
an approximate solution of \eqref{eq:approx_primal_update} in the sense of \eqref{acg problem}.
 Third, it will be shown in Proposition~\ref{ACG facts}(b) below that the triple $(z_k,p_k,w_k)$ computed in step 2 satisfies the inclusion in \eqref{eq:approx_stationary1'} for every $k \ge 1$. As a consequence, if AS-PAL terminates in step 3, then the triple $(z,p,w)$ output in this step is a $(\hat \rho, \hat \eta)$-approximate solution of \eqref{eq:main_prb}.
 Finally, step 4 provides a test, namely, inequality \eqref{eq:stopCOndition-deltak}, to determine when to increase the penalty parameter $c_k$.

Define
the $l$-th cycle ${\cal C}_l$
as the $l$-th set of consecutive indices
$k \ge 1$ for which $c_k$ remains constant, i.e.,
\begin{equation}\label{def:ctilde-l}
{\cal C}_l := \{ k \ge 1: c_k=\tilde c_l:= 2^{l-1} c_1 \} \quad \forall l \ge 1.
\end{equation}
For every $l \ge 1$,  let
$k_l$ denote the smallest  index in ${\cal C}_l$.
Hence,
\begin{equation}\label{def C}
{\cal C}_l = \{ k_{l}, \ldots, k_{l+1}-1\} \quad \forall l \ge 1.
\end{equation}
Clearly, the different values of $\hat k$ that
arise in step 4 are exactly the indices in $\{k_l : l \ge 1\}$.
Moreover, in view of the test performed
in step 4, we have that
$k_{l+1}-k_{l} \ge 2$ for every $l \ge 1$, or equivalently,
every cycle contains at least two indices. While generating the indices
in the $l$-th cycle,
if an index $k \ge k_{l}+2$
satisfying
\eqref{eq:stopCOndition-deltak} is found,
$k$ becomes the last index $k_{l+1}-1$ in the
$l$-th cycle and the $(l+1)$-th cycle
is started at iteration $k_{l+1}$ with
the penalty parameter set to
$\tilde c_{l+1}=2 \tilde c_l$, where $\tilde c_l$ is as in \eqref{def:ctilde-l}.

In the remaining part of this section, we state the main complexity result for AS-PAL, whose proof is the main focus of Section~\ref{sec:proofs of main proposition}. Before stating the main result, we first introduce
the following quantities:
\begin{equation}\label{phi* and lambda bound and bar d}
\phi_*:=\inf_{z\in \Re^n} \phi(z), \quad \bar{d}:=\mbox{\rm dist}(\bar z, {\partial {\mathcal H}}), \quad \underline \lambda:=\min\{\overline{\lambda}, 1/(4m_f)\}
\end{equation}
\begin{equation}\label{nablaf and kappap and chat}
\nabla_f:=\sup_{z\in \mathcal H} |\nabla f(z)|, \quad \kappa_p:=\frac{2D_h(M_h+\nabla_f+\underline \lambda^{-1}(1+\sigma)D_h)}{\bar d \nu^{+}_A}, \quad \hat{c}(\hat \rho, \hat \eta):= \frac{18C_{\sigma}\kappa^2_p}{\underline \lambda \hat \rho^2}+\frac{2\kappa_p}{\hat \eta}
\end{equation}
\begin{equation}\label{S and kappad}
S:=\sup_{z\in \mathcal H}|\phi(z)|, \quad \kappa_d:=S+\frac{4\kappa^2_p}{c_1}-\phi_*,
\end{equation}
where $\overline \lambda$, $c_1$, and $\sigma$ are input parameters for AS-PAL, $(m_f, L_f)$ are as in (A3), $\bar z$ is as in (A2), $M_h$ is as in (A1), $D_h$ is as in (A1), $\nu^+_A$ is as in Subsection~\ref{subsec:notation}, and $C_\sigma$ is as in \eqref{def:lamb-C1}. Note that
assumptions (A1) and (A3) imply that
$S$ and $\nabla_f$ are finite.

The following result describes the ACG iteration/resolvent evaluation complexity for AS-PAL.

\begin{theorem}\label{theor:StaticIPAAL}
Let a tolerance pair $(\hat \rho, \hat \eta) \in \Re_{++}^{2}$ be given
and assume that
$c_1 \leq 4\hat c(\hat \rho, \hat \eta)$ and
$\overline \lam$ is such that
$\overline{\lam} = \Omega(m_f^{-1})$ and $\log^{+}_{0}(m_f\overline \lambda)\leq {\cal O}(1+\kappa_d/(\underline \lambda \hat \rho^2))$, where $c_1$ and $\overline \lambda$ are the initial penalty parameter and prox stepsize of AS-PAL, respectively, $m_f$ is as in (A3), $\hat c(\hat \rho, \hat \eta)$ is as in \eqref{nablaf and kappap and chat}, and $\kappa_d$ is as in \eqref{S and kappad}. Then,  AS-PAL outputs a $(\hat \rho,\hat\eta)$-approximate stationary solution of \eqref{eq:main_prb} in
\begin{equation}\label{eq:main complexity-bound}
{\cal O}\left(\left[1+\frac{m_f \kappa_d}{\hat \rho^2}\right] \sqrt{\mathcal M(\hat c)} \left[\log \left(\mathcal M(\hat c)+\frac{\hat c}{c_1}\right) \right]^2\right)
\end{equation}
ACG iterations/resolvent evaluations, where $\hat c := \hat c(\hat \rho,\hat \eta)$ and
\begin{equation}\label{M equation}
\mathcal M(c):=\overline{\lambda} (L_f+c\|A\|^2) +1
\quad \forall c \in \R.
\end{equation}
\end{theorem}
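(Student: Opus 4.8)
The plan is to bound the total ACG iteration count as the product of the total number of calls to ADAP-FISTA and a uniform upper bound on the ACG iterations performed within a single call. I would group the outer iterations into the cycles $\mathcal{C}_l$ of \eqref{def:ctilde-l}, on which the penalty parameter is frozen at $\tilde c_l$, and then separately control three quantities: the number of cycles, the total number of outer iterations, and the number of prox-stepsize halvings.

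Two structural facts drive everything. First, the generated multipliers stay bounded, $\|p_k\|\le\kappa_p$; I would derive this from the Slater point of (A2), the Lipschitz continuity and compact domain of (A1), and the technical estimates of Appendix~\ref{technical lagrange multiplier}, with $\kappa_p$ exactly as in \eqref{nablaf and kappap and chat}. Second, the accepted stepsize obeys $\lambda_k\ge\underline\lambda$: once $\lambda\le 1/(2m_f)$ the objective of \eqref{eq:approx_primal_update} is strongly convex with modulus at least $1/2$, so by the guarantees of ADAP-FISTA (Appendix~\ref{sec:acg}) the call succeeds and the check \eqref{subdiff ineq check} holds, whence $\lambda$ is never halved below $\underline\lambda=\min\{\overline{\lambda},1/(4m_f)\}$. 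Since $\lambda$ is monotonically nonincreasing over the entire run (it is not reset between outer iterations), the number of halvings, and hence of failed ADAP-FISTA calls, is at most $\mathcal{O}(\log_0^+(m_f\overline{\lambda}))$, which the hypothesis bounds by $\mathcal{O}(1+m_f\kappa_d/\hat\rho^2)$.

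Next I would bound the number of successful outer iterations. Whenever the test \eqref{eq:stopCOndition-deltak} fails one has $\Delta_k>\hat\rho^2/(2C_\sigma)$, which together with $\lambda_i\ge\underline\lambda$ forces the Lyapunov quantity $\mathcal{L}_{c_k}(z_k,p_k)-\|p_k\|^2/(2c_k)$ to drop by at least $\Omega(\underline\lambda\hat\rho^2/C_\sigma)$ per iteration. Telescoping this decrease across cycles, where the correction term $-\|p_k\|^2/(2c_k)$ and the bound $\|p_k\|\le\kappa_p$ are used to absorb the jump caused by each penalty doubling, and bounding the total variation of the Lyapunov function by $\kappa_d$ via its definition \eqref{S and kappad}, gives $\mathcal{O}(C_\sigma\kappa_d/(\underline\lambda\hat\rho^2))=\mathcal{O}(m_f\kappa_d/\hat\rho^2)$ outer iterations, using $\underline\lambda^{-1}=\Theta(m_f)$. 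To close the cycle count I would show $c_k=\mathcal{O}(\hat c)$: the constant $\hat c(\hat\rho,\hat\eta)$ in \eqref{nablaf and kappap and chat} is calibrated so that once the penalty reaches it the stopping conditions of step 3 are met and the method halts, so the number of cycles is $\mathcal{O}(1+\log_0^+(\hat c/c_1))$.

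Finally I would invoke Proposition~\ref{prop:nest_complex1} to cap the ACG iterations in any single call by $\mathcal{O}(\sqrt{\mathcal{M}(c_k)}\,\log(\cdot))$, since the smooth part $\psi_s$ of \eqref{eq:psiS-psimu} has curvature at most $\mathcal{M}(c_k)\le\mathcal{M}(\hat c)$ and the input $\mu=1/2$ contributes only a constant. Multiplying this per-call bound by the total number of calls $\mathcal{O}(1+m_f\kappa_d/\hat\rho^2)$ and collecting the two logarithmic contributions — one from the accelerated/adaptive-Lipschitz per-call complexity and one from the cycle count $\log_0^+(\hat c/c_1)$, both dominated by $\log(\mathcal{M}(\hat c)+\hat c/c_1)$ — yields \eqref{eq:main complexity-bound}. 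I expect the main obstacle to be the descent analysis of the third step: producing a genuine per-iteration decrease of a Lyapunov function despite the full multiplier update rests essentially on $\|p_k\|\le\kappa_p$ together with the correction term built into \eqref{eq:stopCOndition-deltak}, and certifying that the penalty cannot grow past $\mathcal{O}(\hat c)$ before termination is the most delicate calibration in the argument.
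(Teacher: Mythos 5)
Your proposal is correct and follows essentially the same route as the paper: multiplier boundedness via the Slater point (Proposition~\ref{th:pkbounded}), the stepsize floor $\lambda_k\ge\underline\lambda$ from strong convexity below $1/(2m_f)$ (Proposition~\ref{ACG facts}), the per-cycle outer-iteration bound from $\Delta_k\le\kappa_d/\sum\lambda_i$ versus the threshold $\hat\rho^2/(2C_\sigma)$ (Lemma~\ref{lem:upperboundDeltak} and Proposition~\ref{lem:StaticIPAAL}(a)), the penalty cap $\tilde c_l\le\max\{c_1,2\hat c\}$ giving a logarithmic cycle count (Proposition~\ref{lem:StaticIPAAL}(b)), and the per-call bound $\mathcal{O}(\sqrt{\mathcal{M}(\hat c)}\log\mathcal{M}(\hat c))$ from Proposition~\ref{prop:nest_complex1}. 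The only informality is describing the cycle-length argument as a per-iteration Lyapunov decrease, whereas the mechanism is a cumulative average-decrease bound via $\Delta_k$ (the augmented Lagrangian need not decrease monotonically because of the multiplier-update term), but this does not affect the resulting bound.
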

It follows from the definitions of $\hat c(\cdot, \cdot)$ and $\mathcal M(\cdot)$ in \eqref{nablaf and kappap and chat} and \eqref{M equation}, respectively, that the iteration complexity bound \eqref{eq:main complexity-bound} in terms of the tolerance pair $(\hat\rho, \hat \eta)$, up to a logarithmic term, is
\[
{\cal O}\left(\frac{1}{\sqrt{\hat \eta}\cdot\hat\rho^2}+\frac{1}{\hat\rho^3}
\right).
\]

\section{Proof of Theorem~\ref{theor:StaticIPAAL}}\label{sec:proofs of main proposition}

The result below describes properties of the loop consisting of steps 1 and 2 of AS-PAL.

\begin{proposition}\label{ACG facts}
Let $k \in {\cal C}_l$
for some $l \ge 1$ be given.
Then, the following statements hold:
\begin{itemize}
\item[(a)] every ACG call in step 1 of the $k$-th iteration of AS-PAL performs
\begin{equation}\label{ACG result-2}
{\cal O}_1 \left( \sqrt{\mathcal M(\tilde c_l)}\, \log \mathcal M(\tilde c_{l})
\right)
\end{equation}
ACG iterations/resolvent evaluations;
\item [(b)]
during the $k$-th iteration of AS-PAL,
the loop consisting of steps 1 and 2 eventually ends with
a quintuple $(z_k,u_k,w_k,p_k,\lam_k)$ satisfying
\begin{align}
&\|u_k\| \le \sigma \min \left\{\|z_{k}-z_{k-1}\| \,,\,\frac{\|\lambda_k w_k\|}{1-\sigma}\right\};\label{Nesterov zk}\\
&\lambda_k{\cal L}_{c_k}(z_{k-1},p_{k-1}) - \left [ \lambda_k{\cal L}_{c_k}(z_{k},p_{k-1}) + \frac{1}{2} \|z_k-z_{k-1}\|^2 \right ] \ge \inner{u_{k}}{z_{k-1}-z_k};\label{subdiff zk} \\
&w_k\in\nabla f(z_k)+\pt h(z_k)+A^{*}p_k, \quad\|\lambda_k w_k\|\leq (1+\sigma)\|z_k-z_{k-1}\|; \label{w inclusion}\\
&\overline{\lambda}\geq \lambda_k \geq \underline \lambda,\label{lower lambda bound}
\end{align}
where $\overline \lambda$ is the initial prox stepsize and $\underline \lambda$ is as in \eqref{phi* and lambda bound and bar d};
moreover, every prox stepsize $\lam$ generated in the loop consisting of steps 1 and 2 of AS-PAL is in $[\underline \lam, \overline \lam]$.
\end{itemize}
\end{proposition}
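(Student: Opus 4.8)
The plan is to read everything off the input/output specification of ADAP-FISTA (Proposition~\ref{prop:nest_complex1}), the weak-convexity bound (A3), and the definitions \eqref{eq:w_k_def}--\eqref{eq:dual_update2}. For (a), I would first observe that the smooth part in \eqref{eq:psiS-psimu} satisfies $\nabla\psi_s(\cdot) = \lambda[\nabla f(\cdot) + A^*p_{k-1} + c_k A^*(A\cdot - b)] + (\cdot - z_{k-1})$, so its gradient is Lipschitz continuous with constant at most $\lambda(L_f + c_k\|A\|^2)+1$. Since every call in step 1 of the $k$-th iteration uses $\lambda \le \overline\lambda$ and $c_k = \tilde c_l$, this constant is bounded by $\mathcal{M}(\tilde c_l)$ of \eqref{M equation}. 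Feeding $\mu = 1/2$ and $M_0^k \le \mathcal{M}(\tilde c_l)$ into Proposition~\ref{prop:nest_complex1} would then bound the number of iterations (and resolvent evaluations) of each call, whether it succeeds or fails, by \eqref{ACG result-2}.

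For the four relations in (b), I would argue conditionally on the loop stopping. When an output $(z,u)=(z_k,u_k)$ is accepted at stepsize $\lambda_k$, it has passed \eqref{subdiff ineq check}; rewriting the left-hand side of that test as $\Psi(z_{k-1}) - \Psi(z_k)$, where $\Psi := \psi_s + \psi_n$ is the objective of \eqref{eq:approx_primal_update}, gives \eqref{subdiff zk} directly. The ADAP-FISTA output also supplies $u_k \in \nabla\psi_s(z_k) + \partial\psi_n(z_k)$ and $\|u_k\| \le \sigma\|z_k - z_{k-1}\|$ from \eqref{acg problem}; substituting $\nabla\psi_s$, dividing by $\lambda_k$, and combining $u_k + z_{k-1} - z_k = \lambda_k w_k$ from \eqref{eq:w_k_def} with $A^*p_k = A^*p_{k-1} + c_k A^*(Az_k - b)$ from \eqref{eq:dual_update2} yields the inclusion in \eqref{w inclusion}. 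Finally, the triangle inequality applied to $\lambda_k w_k = u_k - (z_k - z_{k-1})$ gives both $\|\lambda_k w_k\| \le (1+\sigma)\|z_k - z_{k-1}\|$ and $\|\lambda_k w_k\| \ge (1-\sigma)\|z_k - z_{k-1}\|$, the latter converting $\|u_k\| \le \sigma\|z_k - z_{k-1}\|$ into the second branch of the minimum in \eqref{Nesterov zk}.

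The crux is termination together with \eqref{lower lambda bound}, which I would establish via the lemma: if the current stepsize satisfies $\lambda \le 1/(2m_f)$, the loop is forced to accept. Indeed, (A3) makes $\lambda f$ be $\lambda m_f$-weakly convex, so $\psi_s$ is $(1 - \lambda m_f)$-strongly convex with modulus at least $1/2 = \mu$, whence Proposition~\ref{prop:nest_complex1} guarantees ADAP-FISTA succeeds; and $\lambda \le 1/(2m_f) \le 1/m_f$ makes $\Psi$ convex, so the accepted inclusion $u \in \partial\Psi(z)$ and the subgradient inequality reproduce exactly \eqref{subdiff ineq check}. Since the loop starts at $\overline\lambda$ and halves, it stops no later than the first term of $\overline\lambda, \overline\lambda/2, \dots$ that is $\le 1/(2m_f)$; because its predecessor exceeds $1/(2m_f)$, this term exceeds $1/(4m_f)$, so $\lambda_k > 1/(4m_f)$ (and $\lambda_k = \overline\lambda$ when $\overline\lambda \le 1/(2m_f)$). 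Either way $\lambda_k \ge \underline\lambda$, and since every generated stepsize is $\ge \lambda_k$ and $\le \overline\lambda$, all lie in $[\underline\lambda, \overline\lambda]$.

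I expect the main obstacle to be precisely this termination-with-lower-bound step: it hinges on aligning the $\mu = 1/2$ strong-convexity hypothesis under which Proposition~\ref{prop:nest_complex1} certifies ADAP-FISTA's success with the weak-convexity constant $m_f$ of $f$, and on tracking the factor-two slack from halving so as to land at $\underline\lambda = \min\{\overline\lambda, 1/(4m_f)\}$ rather than $1/(2m_f)$. Once \eqref{acg problem} and the definitions \eqref{eq:w_k_def}--\eqref{eq:dual_update2} are in hand, the remaining derivations are routine algebra.
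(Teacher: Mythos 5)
Your proposal is correct and follows essentially the same route as the paper: part (a) via the Lipschitz bound $\overline\lambda(L_f+\tilde c_l\|A\|^2)+1=\mathcal M(\tilde c_l)$ fed into Proposition~\ref{prop:nest_complex1}(a), and part (b) via the two claims that an accepted output yields \eqref{Nesterov zk}--\eqref{w inclusion} (using \eqref{acg problem}, \eqref{eq:w_k_def}, \eqref{eq:dual_update2}, and the triangle inequality) and that any stepsize $\lambda\le 1/(2m_f)$ forces success through the $\mu=1/2$ strong convexity of $\psi_s$ and Proposition~\ref{strongly convex fista}, which pins $\lambda_k$ above $\underline\lambda=\min\{\overline\lambda,1/(4m_f)\}$. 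The only cosmetic difference is that you obtain the second branch of the minimum in \eqref{Nesterov zk} from the lower triangle bound $\|\lambda_k w_k\|\ge(1-\sigma)\|z_k-z_{k-1}\|$ rather than the paper's chain starting from $\sigma^{-1}\|u_k\|-\|u_k\|$, which is equivalent.
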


\begin{proof}
(a) Using the definition of $\cal L_{c_k}(\cdot;p_{k-1})$
in \eqref{lagrangian2}
and assumption \eqref{gradLips}, we easily see that its smooth part, namely, $\cal L_{ c_k}(\cdot;p_{k-1})-h(\cdot)$,
has $(L_f+c_k\|A\|^2)$-Lipschitz continuous gradient everywhere on $\R^n$.
This observation together with the facts that $\lam \le \overline \lam$,
$c_k = \tilde c_l$,
and the definition of
$\cal M(\cdot)$ in \eqref{M equation}, then imply that the function
$\psi_s$ in \eqref{eq:psiS-psimu} has
$\cal M(\tilde c_l)$-Lipschitz continuous gradient.
Noting that $M_0^k$ in step 1
is chosen so that $M_0^k \le \mathcal M(c_k) =\mathcal M(\tilde c_l)$ and
that each call to ADAP-FISTA in step 1 is made
with $(\mu,L_0)=(1/2,M_0^k)$, we then conclude that (a) follows directly from Proposition~\ref{prop:nest_complex1}(a) with $\bar L= \mathcal M(\tilde c_l)$ and $\mu=1/2$.

(b) 
We first claim that if the loop consisting of steps 1 and 2 of the $k$-iteration of AS-PAL
stops, then \eqref{Nesterov zk}, \eqref{subdiff zk}, and \eqref{w inclusion} hold.
Indeed, assume that the loop consisting of steps 1 and 2 of the $k$-th iteration of AS-PAL
stops. It then follows
that ADAP-FISTA with inputs given by
\eqref{eq:Ms-mu} and \eqref{eq:psiS-psimu} stops successfully
and $(z,u,\lam)=(z_k,u_k,\lam_k)$ satisfies \eqref{subdiff ineq check}.
These two conclusions,
identities \eqref{eq:psiS-psimu} and \eqref{eq:w_k_def}, and Proposition~\ref{prop:nest_complex1}(b) with $(\psi_s,\psi_n)$ as in \eqref{eq:psiS-psimu}, $x_0=z_{k-1}$, and $(y,u)=(z_k,u_k)$
then imply that
\eqref{subdiff zk},
the first inequality in \eqref{Nesterov zk},
and the inclusion in \eqref{w inclusion} hold.
Now, using the definition of $w_k$ in \eqref{eq:w_k_def}, the triangle inequality, and the first inequality in \eqref{Nesterov zk}, we have:
\begin{equation}\label{lambdaw bound}
\frac{1}{\sigma}\|u_k\|-\|u_k\|\overset{\eqref{Nesterov zk}}{\leq}\|z_{k}-z_{k-1}\|-\|u_{k}\|\leq \|u_k+z_{k-1} -z_{k}\|\overset{\eqref{eq:w_k_def}}{=}\|\lambda_kw_k\|\overset{\eqref{Nesterov zk}}{\leq} (1+\sigma)\|z_k-z_{k-1}\|,
\end{equation}
from which 
the second inequality in \eqref{Nesterov zk} and the inequality in \eqref{w inclusion} follow.

We now claim that
if 
step 1 is performed
with a prox stepsize
$\lambda \leq 1/(2m_f)$
in the $k$-th iteration,
then for every $j >k$, we have that
$\lam_{j-1}=\lam$
and the $j$-th iteration performs step 1 only once.
To show the claim,
assume that $\lambda\leq 1/(2m_f)$. Using this assumption, the definition of $\cal L_c$ in \eqref{lagrangian2}, and the assumption \eqref{lowerCurvature-m} that $f$ is $m_f$-weakly convex, we see that the function $\psi_s$ in \eqref{eq:psiS-psimu} is strongly convex with modulus $1-\lam m_f \geq 1/2$. Since each ACG call is performed in step~1 of AS-PAL with $\mu=1/2$, it follows immediately from Proposition~\ref{strongly convex fista} with $(\psi_s,\psi_n)$ as in \eqref{eq:psiS-psimu} that ADAP-FISTA terminates successfully and outputs
a pair $(z,u)$ satisfying $u \in \partial (\psi_s+\psi_n)(z)$. This inclusion, the definition of
$(\psi_s,\psi_n)$, and 
the definition of subdifferential in \eqref{def:epsSubdiff},
then imply that
\eqref{subdiff ineq check} holds.
Hence, in view of the termination criteria of step~2 of AS-PAL, it follows that $\lambda_k=\lambda$. It is then easy to see, by the way $\lambda$ is updated in step~2 of AS-PAL, that $\lambda$ is not halved in the $(k+1)$-th iteration or any subsequent iteration, hence proving the claim.

It is now straightforward to
see that the above two claims,
the fact that the initial value of the prox stepsize is equal to $\overline{\lambda}$, and
the way $\lam_k$ is updated in
AS-PAL, imply
that the lemma holds.
\end{proof}

The subsequent technical result characterizes the change in the augmented Lagrangian function between consecutive iterations of the AS-PAL method.



\begin{lemma}\label{lem:declemma5}
For every $k \ge 1$, we have:
\begin{align}
{\cal L}_{c_k}(z_k,p_k) - {\cal L}_{c_k}(z_k,p_{k-1}) &=\frac{1}{c_k}\|p_k-p_{k-1}\|^2, \label{auxineq:declemma2}
\end{align}
and
\begin{align}
\frac {\lambda_k}{C_{\sigma}}\|w_k\|^2
&\le {\cal L}_{c_k}(z_{k-1},p_{k-1})-{\cal L}_{c_k}(z_k,p_k)+\frac{1}{c_k}\|p_k-p_{k-1}\|^2\label{auxineq:declemma3}
\end{align}
where $C_\sigma$ is as in \eqref{def:lamb-C1}.
 \end{lemma}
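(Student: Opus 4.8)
The plan is to dispatch the identity \eqref{auxineq:declemma2} by direct computation and then to reduce the inequality \eqref{auxineq:declemma3} to a statement about the proximal subproblem with the multiplier $p_{k-1}$ held fixed, which can be settled using the estimates in Proposition~\ref{ACG facts}(b). For \eqref{auxineq:declemma2} I would simply expand both augmented Lagrangian values via the definition \eqref{lagrangian2}; since the terms $f(z_k)+h(z_k)$ and the quadratic penalty $\frac{c_k}{2}\|Az_k-b\|^2$ are common to both and cancel, one is left with $\inner{p_k-p_{k-1}}{Az_k-b}$. The dual update \eqref{eq:dual_update2} gives $Az_k-b=(p_k-p_{k-1})/c_k$, and substituting this immediately yields $\frac{1}{c_k}\|p_k-p_{k-1}\|^2$.

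For \eqref{auxineq:declemma3}, the first step is to use \eqref{auxineq:declemma2} to simplify its right-hand side: the term $\frac{1}{c_k}\|p_k-p_{k-1}\|^2$ equals ${\cal L}_{c_k}(z_k,p_k)-{\cal L}_{c_k}(z_k,p_{k-1})$, so the right-hand side collapses to ${\cal L}_{c_k}(z_{k-1},p_{k-1})-{\cal L}_{c_k}(z_k,p_{k-1})$. It therefore suffices to prove $\frac{\lambda_k}{C_\sigma}\|w_k\|^2\le{\cal L}_{c_k}(z_{k-1},p_{k-1})-{\cal L}_{c_k}(z_k,p_{k-1})$, a bound involving only the fixed multiplier $p_{k-1}$. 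Next I would invoke the descent inequality \eqref{subdiff zk}, which after dividing by $\lambda_k$ lower-bounds ${\cal L}_{c_k}(z_{k-1},p_{k-1})-{\cal L}_{c_k}(z_k,p_{k-1})$ by $\lambda_k^{-1}\left[\frac{1}{2}\|z_k-z_{k-1}\|^2-\inner{u_k}{z_k-z_{k-1}}\right]$.

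The crux, and the step I expect to be the main obstacle, is to show that this lower bound is at least $\frac{\lambda_k}{C_\sigma}\|w_k\|^2$ with the precise constant $C_\sigma=2(1-\sigma)^2/(1-2\sigma)$. The tempting route is to bound $\inner{u_k}{z_k-z_{k-1}}$ and $\|\lambda_k w_k\|$ separately by Cauchy--Schwarz together with $\|u_k\|\le\sigma\|z_k-z_{k-1}\|$ from \eqref{Nesterov zk} and $\|\lambda_k w_k\|\le(1+\sigma)\|z_k-z_{k-1}\|$ from \eqref{w inclusion}, but this is too lossy and fails to close with the stated $C_\sigma$. Instead I would substitute the definition \eqref{eq:w_k_def}, i.e. $z_k-z_{k-1}=u_k-\lambda_k w_k$, directly into $\frac{1}{2}\|z_k-z_{k-1}\|^2-\inner{u_k}{z_k-z_{k-1}}$; the cross terms cancel and this expression simplifies exactly to $\frac{1}{2}\left(\|\lambda_k w_k\|^2-\|u_k\|^2\right)$.

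Finally I would apply the \emph{second} estimate in \eqref{Nesterov zk}, namely $\|u_k\|\le\frac{\sigma}{1-\sigma}\|\lambda_k w_k\|$, to obtain $\frac{1}{2}\left(\|\lambda_k w_k\|^2-\|u_k\|^2\right)\ge\frac{1}{2}\left(1-\frac{\sigma^2}{(1-\sigma)^2}\right)\|\lambda_k w_k\|^2=\frac{1-2\sigma}{2(1-\sigma)^2}\|\lambda_k w_k\|^2=\frac{1}{C_\sigma}\|\lambda_k w_k\|^2$. Dividing through by $\lambda_k$ and using $\|\lambda_k w_k\|^2=\lambda_k^2\|w_k\|^2$ then produces precisely $\frac{\lambda_k}{C_\sigma}\|w_k\|^2$, which completes the argument. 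The whole difficulty is thus concentrated in recognizing that the identity $z_k-z_{k-1}=u_k-\lambda_k w_k$ converts the descent quantity into a clean difference of squares, after which the tailored constant $C_\sigma$ matches by design.
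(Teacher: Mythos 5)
Your proposal is correct and follows essentially the same route as the paper's proof: the identity \eqref{auxineq:declemma2} by direct expansion with the dual update, and the inequality by combining \eqref{subdiff zk} with the algebraic identity $\frac{1}{2}\|z_k-z_{k-1}\|^2+\inner{u_k}{z_{k-1}-z_k}=\frac{1}{2}\|\lambda_k w_k\|^2-\frac{1}{2}\|u_k\|^2$ and the second bound in \eqref{Nesterov zk}. The only (immaterial) difference is that you collapse the right-hand side via \eqref{auxineq:declemma2} before establishing the descent bound, whereas the paper adds it at the end.
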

\begin{proof}
Identity \eqref{auxineq:declemma2} follows immediately from the definition of the Lagrangian in \eqref{lagrangian2} and relation \eqref{eq:dual_update2}. Now, using relation \eqref{subdiff zk},
the second inequality in \eqref{Nesterov zk}, and
the definitions of $C_{\sigma}$ and $w_k$ in \eqref{def:lamb-C1} and \eqref{eq:w_k_def}, respectively, we conclude that:
\begin{align}
& \lambda_k{\cal L}_{c_k}(z_{k-1},p_{k-1})-\lambda_k{\cal L}_{c_k}(z_k,p_{k-1})  \overset{\eqref{subdiff zk}}{\geq} \frac{1}{2} \|z_k-z_{k-1}\|^2 +\inner{u_{k}}{z_{k-1}-z_k} \nonumber\\
&=\frac12 \| z_{k-1}-z_k + u_k \|^2-\frac{1}{2}\|u_k\|^2 \overset{\eqref{eq:w_k_def}}{=} \frac{1}{2}\|\lambda_kw_k\|^2-\frac{1}{2}\|u_k\|^2 \nonumber \\
&\overset{\eqref{Nesterov zk}}{\geq} \frac{1}{2}\|\lambda_kw_k\|^2-\frac{\sigma^2}{2(1-\sigma)^2}\|\lambda_kw_k\|^2=\frac{1-2\sigma}{2(1-\sigma)^2}\|\lambda_kw_k\|^2 \overset{\eqref{def:lamb-C1}}{=} \frac{\|\lambda_kw_k\|^2}{C_{\sigma}}. \label{eq:aux_DeltaLagr_bd1}
\end{align}
Inequality \eqref{auxineq:declemma3} now follows by dividing
\eqref{eq:aux_DeltaLagr_bd1} by
$\lambda_k$ and combining
the resulting inequality with
\eqref{auxineq:declemma2}.
\end{proof}

The result below,
which establishes boundedness of the sequence of Lagrange multipliers,
makes use of a
technical result in the Appendix,
namely Lemma \ref{lem:qbounds-2}.

\begin{proposition}\label{th:pkbounded} The sequence  $\{p_k\}$ generated by  AS-PAL satisfies 
\begin{equation} \label{ineq:pkbounded}
\|p_k\|\leq \kappa_p,\quad \forall k\geq 0,
\end{equation}
where $\kappa_p$ is defined in \eqref{nablaf and kappap and chat}.
\end{proposition}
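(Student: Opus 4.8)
The plan is to prove \eqref{ineq:pkbounded} by induction on $k$, by first extracting from the first-order inclusion in \eqref{w inclusion} a scalar relation linking $\|p_k\|$ and $\|p_{k-1}\|$, and then closing this recursion with the technical Lemma~\ref{lem:qbounds-2}. Two preliminary facts come first. Since $p_0=0$ and each update \eqref{eq:dual_update2} adds $c_k(Az_k-b)=c_k A(z_k-\bar z)$ — using $A\bar z=b$ from (A2) — a trivial induction shows $p_k\in\mathrm{Range}(A)$ for all $k$; as $\mathrm{Range}(A)=(\ker A^*)^\perp$, this gives $\|A^*p_k\|\ge \nu^+_A\|p_k\|$. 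Next, combining the norm bound in \eqref{w inclusion} with $\lambda_k\ge\underline\lambda$ from \eqref{lower lambda bound} and $\|z_k-z_{k-1}\|\le D_h$ from (A1) yields the crude a priori bound $\|w_k\|\le (1+\sigma)D_h/\underline\lambda$.

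The core of the argument is a Slater-point estimate, needed because subgradients of $h$ can be unbounded at boundary points of $\mathcal H$, so the naive $M_h$-bound on $\partial h$ fails. I would rewrite the inclusion in \eqref{w inclusion} as $w_k-\nabla f(z_k)-A^*p_k\in\partial h(z_k)$ and test the subgradient inequality at $z':=\bar z-\bar d\,A^*p_k/\|A^*p_k\|$, which lies in $\mathcal H$ since the closed ball of radius $\bar d={\rm dist}(\bar z,\partial\mathcal H)$ about $\bar z$ is contained in $\mathcal H$. Computing $\langle A^*p_k,z'-z_k\rangle=-\langle p_k,Az_k-b\rangle-\bar d\|A^*p_k\|$ and bounding the subgradient inequality's right-hand side via the $M_h$-Lipschitz continuity of $h$, the bound $\|\nabla f(z_k)\|\le\nabla_f$, the estimate on $\|w_k\|$ above, and $\|z'-z_k\|\le D_h$, then invoking $\|A^*p_k\|\ge\nu^+_A\|p_k\|$ and substituting $Az_k-b=(p_k-p_{k-1})/c_k$ from \eqref{eq:dual_update2}, I would arrive at
\[
\bar d\,\nu^+_A\|p_k\|+\frac{1}{c_k}\Big(\|p_k\|^2-\langle p_k,p_{k-1}\rangle\Big)\le D_h\left(M_h+\nabla_f+\frac{(1+\sigma)D_h}{\underline\lambda}\right)=\tfrac12\,\bar d\,\nu^+_A\kappa_p,
\]
where the last identity is just the definition of $\kappa_p$ in \eqref{nablaf and kappap and chat}.

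The remaining and most delicate step is to turn this one-step relation into a uniform bound. The obstacle is the cross term $\langle p_k,p_{k-1}\rangle$, which couples consecutive multipliers and prevents a direct estimate; this is exactly what Lemma~\ref{lem:qbounds-2} is designed to resolve. Writing $q_k:=\|p_k\|$ and feeding the displayed recursion (with $q_0=0$) into that lemma yields $\|p_k\|\le\kappa_p$. Concretely, the induction runs as follows: assuming $q_{k-1}\le\kappa_p$ and using Cauchy–Schwarz $\langle p_k,p_{k-1}\rangle\le q_k q_{k-1}\le q_k\kappa_p$, the displayed inequality gives $\bar d\,\nu^+_A q_k+c_k^{-1}q_k(q_k-\kappa_p)\le\tfrac12\bar d\,\nu^+_A\kappa_p$; if one had $q_k>\kappa_p$ the second term would be positive, forcing $q_k<\tfrac12\kappa_p$, a contradiction, so $q_k\le\kappa_p$. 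I expect the packaging of this induction into the clean statement of Lemma~\ref{lem:qbounds-2}, handling the varying penalty $c_k$ and the cross term uniformly, to be the main technical point, while the Slater estimate of the second paragraph is the key analytic idea.
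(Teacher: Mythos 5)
Your proposal is correct and follows essentially the same route as the paper: bound $\|w_k-\nabla f(z_k)\|$ via \eqref{w inclusion} and \eqref{lower lambda bound}, use a Slater-point subgradient estimate together with $p_k\in A(\Re^n)$ and Lemma~\ref{lem:linalg} to control $\bar d\,\nu_A^+\|p_k\|+\inner{p_k}{Az_k-b}$, substitute the multiplier update \eqref{eq:dual_update2}, and close by induction from $p_0=0$ — which is exactly the content of Lemma~\ref{lem:qbounds-2}(a)--(b) as the paper invokes it. The only differences are cosmetic: you re-derive the Slater estimate directly with the test point $\bar z-\bar d\,A^*p_k/\|A^*p_k\|$ (obtaining a constant sharper by a factor of $2$ than the paper's Lemma~\ref{lem:bound_xiN} route, and implicitly requiring the easy degenerate case $A^*p_k=0$, i.e.\ $p_k=0$, to be set aside), you make the verification $p_k\in\mathrm{Range}(A)$ explicit where the paper leaves it implicit, and you run the final contradiction argument inline rather than through the max-formula \eqref{q bound-2}.
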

\begin{proof}
Using the inequality in \eqref{w inclusion}, the triangle inequality, the second inequality in \eqref{lower lambda bound}, and the definitions of $D_h$ and $\nabla_f$ in (A1) and \eqref{nablaf and kappap and chat}, respectively, we conclude that
\begin{equation}\label{new w bound}
\|w_k-\nabla f(z_k)\|\overset{\eqref{w inclusion}}{\leq} \frac{1}{\lam_k}(1+\sigma)\|z_k-z_{k-1}\|+\nabla_f\overset{\eqref{lower lambda bound}}{\leq} \frac{D_h(1+\sigma)}{\underline \lambda}+\nabla_f.
\end{equation}
Now, using the inclusion in \eqref{w inclusion}, the relation in \eqref{new w bound},
Lemma~\ref{lem:qbounds-2}(b) with
$(z,q,r)=(z_k,p_k,w_k-\nabla f(z_k))$ and
$q^-=p_{k-1}$, and the
definition of $\kappa_p$ in \eqref{nablaf and kappap and chat}, we conclude that for every
$k \geq 1$:
\begin{equation}\label{pksecondbound}
\|p_k\|\overset{\eqref{q bound-2}}{\leq} \max\left\{\|p_{k-1}\|,\frac{2D_h(M_h+\|w_k-\nabla f(z_k)\|)}{\bar d \nu^{+}_A} \right\}\overset{\eqref{new w bound}}{\leq}\max\left\{\|p_{k-1}\|,\kappa_p\right\}.
\end{equation}
Now, the conclusion of the proposition follows from the above relation, the fact that
$p_0=0$, and a simple induction argument.
\end{proof}

Recall that the
$l$-th cycle ${\cal C}_l$ of AS-PAL is  defined
in \eqref{def:ctilde-l}. The following result shows that  the sequence $\{\|\|w_k\|\}_{k\in {\cal C}_l}$ is bounded and can be controlled  by   $\{\Delta_k\}_{k\in {\cal C}_l}$ plus a term which is of $\mathcal{O}(1/{\tilde c_l})$.

\begin{lemma}\label{lem:minwk-Deltak}
Consider the sequences
$\{(z_k,p_k,w_k)\}_{k\in {\cal C}_l}$ and $\{\Delta_k\}$ generated by AS-PAL. Then, for every $k\in {\cal C}_l$ such that $k \geq \hat k+1$, we have:
  
\begin{equation}\label{main-ineq-dreasingLag}
\frac{\sum_{i=\hat k+1}^{k} \lambda_i\|w_i\|^2}{\sum_{i=\hat k+1}^{k} \lambda_i}\leq C_{\sigma}\left(\Delta_k + \frac{9 \kappa^2_p}{\underline \lambda \tilde c_l}\right)
\end{equation}
where  $C_{\sigma}$, $\underline \lambda$, and $\kappa_p$ are as in \eqref{def:lamb-C1}, \eqref{phi* and lambda bound and bar d}, and \eqref{nablaf and kappap and chat}, respectively and $\hat k$ is the first index in ${\cal C}_l$.

\end{lemma}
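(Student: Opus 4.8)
The plan is to sum the per-iteration decrease inequality \eqref{auxineq:declemma3} across the cycle, telescope the augmented Lagrangian, and then recognize the quantity $\Delta_k$ inside the resulting expression. First I fix $l$ and record that $c_i = \tilde c_l$ for every $i \in {\cal C}_l$, so that throughout the argument every augmented Lagrangian is evaluated at the single penalty value $\tilde c_l$; this constancy is exactly what makes the telescoping legitimate. Writing $\Lambda := \sum_{i=\hat k+1}^{k}\lambda_i$ for brevity and applying \eqref{auxineq:declemma3} for each $i \in \{\hat k+1,\dots,k\}$, summation gives
\[
\frac{1}{C_\sigma}\sum_{i=\hat k+1}^{k}\lambda_i\|w_i\|^2 \le \sum_{i=\hat k+1}^{k}\left[{\cal L}_{\tilde c_l}(z_{i-1},p_{i-1}) - {\cal L}_{\tilde c_l}(z_i,p_i)\right] + \frac{1}{\tilde c_l}\sum_{i=\hat k+1}^{k}\|p_i - p_{i-1}\|^2,
\]
where the first sum on the right telescopes to ${\cal L}_{\tilde c_l}(z_{\hat k},p_{\hat k}) - {\cal L}_{\tilde c_l}(z_k,p_k)$.

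Next I would convert the boundary term ${\cal L}_{\tilde c_l}(z_{\hat k},p_{\hat k})$ into the form appearing in $\Delta_k$. Identity \eqref{auxineq:declemma2} at index $\hat k$ yields ${\cal L}_{\tilde c_l}(z_{\hat k},p_{\hat k}) = {\cal L}_{\tilde c_l}(z_{\hat k},p_{\hat k-1}) + \tilde c_l^{-1}\|p_{\hat k}-p_{\hat k-1}\|^2$, while the definition of $\Delta_k$ in \eqref{eq:stopCOndition-deltak} (with $c_k = \tilde c_l$) gives ${\cal L}_{\tilde c_l}(z_{\hat k},p_{\hat k-1}) - {\cal L}_{\tilde c_l}(z_k,p_k) = \Delta_k \Lambda + \|p_k\|^2/(2\tilde c_l)$. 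Substituting both and folding the extra increment into the sum produces
\[
\frac{1}{C_\sigma}\sum_{i=\hat k+1}^{k}\lambda_i\|w_i\|^2 \le \Delta_k \Lambda + \frac{\|p_k\|^2}{2\tilde c_l} + \frac{1}{\tilde c_l}\sum_{i=\hat k}^{k}\|p_i-p_{i-1}\|^2.
\]

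It then remains to bound the error terms, for which I invoke Proposition~\ref{th:pkbounded} to get $\|p_i\| \le \kappa_p$ for all $i$, hence $\|p_i - p_{i-1}\|^2 \le 4\kappa_p^2$ and $\|p_k\|^2 \le \kappa_p^2$. Since the extended sum $\sum_{i=\hat k}^{k}$ has $k-\hat k+1$ terms, the two error terms are at most $\kappa_p^2/(2\tilde c_l) + 4\kappa_p^2(k-\hat k+1)/\tilde c_l = \tilde c_l^{-1}\kappa_p^2\,[\,9/2 + 4(k-\hat k)\,]$. The final step is the counting argument: by the lower bound in \eqref{lower lambda bound} we have $\Lambda \ge (k-\hat k)\underline{\lambda}$, and the hypothesis $k \ge \hat k+1$ gives $k-\hat k \ge 1$, so $9/2 + 4(k-\hat k) \le 9(k-\hat k) \le (9/\underline{\lambda})\Lambda$; thus the error is at most $9\kappa_p^2\Lambda/(\underline{\lambda}\,\tilde c_l)$. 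Dividing through by $\Lambda$ and multiplying by $C_\sigma$ yields \eqref{main-ineq-dreasingLag}.

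The main obstacle I anticipate is the accumulation of the $O(k-\hat k)$ dual-increment terms $\|p_i-p_{i-1}\|^2$: each is individually only $O(1/\tilde c_l)$, but there are linearly many of them, so no constant-order bound on their sum is possible. The resolution is that $\Lambda$ itself grows at least linearly in $k-\hat k$ through $\lambda_i \ge \underline{\lambda}$, so dividing by $\Lambda$ restores the $O(1/\tilde c_l)$ scaling; the specific constant $9$ is then precisely what is needed to simultaneously absorb the two lower-order terms $\|p_k\|^2/(2\tilde c_l)$ and $\tilde c_l^{-1}\|p_{\hat k}-p_{\hat k-1}\|^2$, using only $k-\hat k \ge 1$.
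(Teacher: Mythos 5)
Your proof is correct and follows essentially the same route as the paper's: sum and telescope \eqref{auxineq:declemma3} over the cycle (where $c_i=\tilde c_l$ is constant), shift the boundary term via \eqref{auxineq:declemma2} at $i=\hat k$ to recognize $\Delta_k$, bound the dual increments by $4\kappa_p^2$ each using Proposition~\ref{th:pkbounded}, and absorb the resulting $O(k-\hat k)$ error into $9\kappa_p^2/(\underline\lambda\,\tilde c_l)$ via $\sum_i\lambda_i\ge(k-\hat k)\underline\lambda$ and $k-\hat k\ge 1$. The only difference is cosmetic bookkeeping of the constant $9$, which matches the paper's computation exactly.
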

\begin{proof}
We have by relation \eqref{ineq:pkbounded} and the bound   $\|p_j-p_{j-1}\|^2\leq 2\|p_j\|^2+2\|p_{j-1}\|^2\leq 4 \kappa^2_p$, that it follows that for any $k\in \mathcal C_{l}$,
\begin{equation}\label{eq:auxboundpk2}
\frac{\|p_k\|^2}{2}+ \sum_{i={\hat k}}^k\|p_i-p_{i-1}\|^2
\le \frac{\kappa_p^2}{2}+ 4(k-\hat k+1)\kappa_p^2= \frac{(1+ 8(k-\hat k+1))\kappa_p^2}{2}\leq 9(k-\hat k)\kappa_p^2.
\end{equation}
Hence, relations \eqref{auxineq:declemma2}, \eqref{auxineq:declemma3}, and \eqref{eq:auxboundpk2} and the fact that $c_{k}=\tilde c_l$ for every $k\in {\cal C}_l$,  imply that for any $k\in {\cal C}_l$ such that $k \geq \hat k+1$,
\begin{align*}
	\frac{1-2\sigma}{2(1-\sigma)^2}\sum_{i=\hat k+1}^k \lam_i\|w_i\|^2 & \overset{\eqref{auxineq:declemma3}}{\leq} \sum_{i=\hat k+1}^k\left[\
	{\cal L}_{c_i}(z_{i-1},p_{i-1})-{\cal L}_{c_i}(z_i,p_i)+ \frac{1}{c_i}\|p_i-p_{i-1}\|^2 \right]
	\\
    & \overset{j \in {\cal C}_l}{=} \sum_{i=\hat k+1}^k
	\left[{\cal L}_{\tilde c_l}(z_{i-1},p_{i-1})-{\cal L}_{\tilde c_l}(z_i,p_i)+ \frac{1}{\tilde c_l}\|p_i-p_{i-1}\|^2\right]  \\
	&={\cal L}_{\tilde c_l}(z_{\hat k},p_{\hat k})-{\cal L}_{\tilde c_l}(z_k,p_k)+ \frac{1}{\tilde c_l}\sum_{i={\hat k+1}}^k\|p_i-p_{i-1}\|^2\\
	& \overset{\eqref{auxineq:declemma2}}{=} {\cal L}_{\tilde c_l}(z_{\hat k},p_{\hat k-1})-{\cal L}_{\tilde c_l}(z_{k},p_{k})+ \frac{1}{\tilde c_l}\sum_{i={\hat k}}^k\|p_i-p_{i-1}\|^2 \\
	& \overset{\eqref{eq:auxboundpk2}}{\leq} {\cal L}_{\tilde c_l}(z_{\hat k},p_{\hat k-1})-{\cal L}_{\tilde c_l}(z_{k},p_{k}) - \frac{\|p_k\|^2}{2 \tilde c_l} + \frac{9(k-\hat k)\kappa_p^2}{\tilde c_l} \\
	& =  \left(\sum_{i=\hat k+1}^{k} \lambda_i \right) \Delta_k + \frac{9(k-\hat k)\kappa_p^2}{\tilde c_l},
	\end{align*}
where the last equality follows from the definition of $\Delta_k$  in \eqref{eq:stopCOndition-deltak}.
Now, using the above bound and \eqref{lower lambda bound} we have:
\begin{equation*}
\frac{\sum_{i=\hat k+1}^{k} \lambda_i\|w_i\|^2}{\sum_{i=\hat k+1}^{k} \lambda_i}\leq C_{\sigma}\left(\Delta_k + \frac{9(k-\hat k)\kappa_p^2}{\tilde c_l \sum_{i=\hat k+1}^{k}\lam_i}\right)\overset{\eqref{lower lambda bound}}{\leq}C_{\sigma}\left(\Delta_k + \frac{9\kappa_p^2}{\underline \lambda \tilde c_l}\right).
\end{equation*}
The result follows immediately from the above bound.
\end{proof}
The next result establishes bounds on
$\|Az_k-b\|$ and on
the quantity
$\Delta_k$ defined in \eqref{eq:stopCOndition-deltak}.

\begin{lemma}\label{lem:upperboundDeltak} Consider the sequence of iterates $\{(z_{k},c_{k},p_{k})\}_{k\in {\cal C}_l}$ generated during the $l$-th cycle of  AS-PAL and let $\Delta_k$ be as in \eqref{eq:stopCOndition-deltak}.
Then, for every $k\in {\cal C}_l$,
\begin{itemize}
\item[(a)] we have
\begin{equation}\label{feasibility bound}
\|Az_k-b\|\leq \frac{2\kappa_p}{\tilde c_l};
\end{equation}
\item[(b)]
if additionally $k\geq \hat k+1$, then
\begin{equation}\label{delta k bound}
\Delta_k \le \frac{\kappa_d}{\sum_{i= \hat k+1}^{k}\lambda_i},
\end{equation}
where $\kappa_d$ is as in \eqref{S and kappad} and $\hat k$ denotes the
first index in ${\cal C}_l$.
\end{itemize}
	\end{lemma}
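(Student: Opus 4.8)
The plan is to handle the two parts separately: part (a) is a direct consequence of the multiplier update together with the boundedness of $\{p_k\}$, while part (b) hinges on a completion-of-squares identity for the augmented Lagrangian.

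For part (a), I would start from the dual update \eqref{eq:dual_update2}, which gives $Az_k-b = (p_k-p_{k-1})/c_k$. Taking norms, applying the triangle inequality, and invoking the uniform multiplier bound $\|p_j\|\le \kappa_p$ from Proposition~\ref{th:pkbounded} yields $\|Az_k-b\|\le 2\kappa_p/c_k$. Since $c_k=\tilde c_l$ for every $k\in\mathcal{C}_l$ by \eqref{def:ctilde-l}, this is precisely \eqref{feasibility bound}.

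For part (b), the goal is to show that the bracketed quantity defining $\Delta_k$ in \eqref{eq:stopCOndition-deltak}, namely $\mathcal{L}_{\tilde c_l}(z_{\hat k},p_{\hat k-1})-\mathcal{L}_{\tilde c_l}(z_k,p_k)-\|p_k\|^2/(2\tilde c_l)$, is bounded above by $\kappa_d$; dividing by $\sum_{i=\hat k+1}^{k}\lambda_i$ then gives \eqref{delta k bound}. I would bound the two Lagrangian contributions in opposite directions. For the first term, I would expand $\mathcal{L}_{\tilde c_l}(z_{\hat k},p_{\hat k-1})$ via \eqref{lagrangian2} and estimate each piece: $\phi(z_{\hat k})\le S$ since $z_{\hat k}\in\mathcal{H}$; the linear term is at most $\|p_{\hat k-1}\|\cdot\|Az_{\hat k}-b\|\le 2\kappa_p^2/\tilde c_l$ by Cauchy--Schwarz, Proposition~\ref{th:pkbounded}, and part (a) applied at the index $\hat k\in\mathcal{C}_l$; and the quadratic term is likewise at most $2\kappa_p^2/\tilde c_l$. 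Summing and using $\tilde c_l\ge c_1$ yields $\mathcal{L}_{\tilde c_l}(z_{\hat k},p_{\hat k-1})\le S+4\kappa_p^2/c_1$.

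The crucial step is lower-bounding the remaining terms, and this is where I expect the main difficulty to lie. I would use the completion-of-squares identity
\[
\mathcal{L}_c(z,p)+\frac{\|p\|^2}{2c}=\phi(z)+\frac{c}{2}\left\|Az-b+\frac{p}{c}\right\|^2,
\]
which follows directly from \eqref{lagrangian2}. Applied with $(z,p,c)=(z_k,p_k,\tilde c_l)$, it shows $\mathcal{L}_{\tilde c_l}(z_k,p_k)+\|p_k\|^2/(2\tilde c_l)\ge\phi(z_k)\ge\phi_*$, hence $-\mathcal{L}_{\tilde c_l}(z_k,p_k)-\|p_k\|^2/(2\tilde c_l)\le-\phi_*$. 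Combining this with the upper bound on the first Lagrangian term gives the bracketed quantity $\le S+4\kappa_p^2/c_1-\phi_*=\kappa_d$, by the definition of $\kappa_d$ in \eqref{S and kappad}. The obstacle is really just recognizing this identity; without it one is forced into a lengthy accounting of inner products after substituting $Az_k-b=(p_k-p_{k-1})/\tilde c_l$, which obscures the clean bound. With the identity in hand, the estimate is immediate and the division by $\sum_{i=\hat k+1}^{k}\lambda_i$ completes the proof.
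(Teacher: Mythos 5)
Your proposal is correct and follows essentially the same route as the paper: part (a) via the dual update and the multiplier bound $\|p_k\|\le\kappa_p$, and part (b) by upper-bounding $\mathcal{L}_{\tilde c_l}(z_{\hat k},p_{\hat k-1})$ term by term using Cauchy--Schwarz, \eqref{ineq:pkbounded}, \eqref{feasibility bound}, and $\tilde c_l\ge c_1$, then lower-bounding $\mathcal{L}_{\tilde c_l}(z_k,p_k)+\|p_k\|^2/(2\tilde c_l)$ by $\phi_*$ via the same completion-of-squares identity the paper uses in \eqref{lower bound lagrange}. No gaps.
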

\begin{proof} 
(a)
Let $k \in \cal C_{l}$. Using the update for $p_k$ in \eqref{eq:dual_update2}, triangle inequality, and the bound on $p_k$ in \eqref{ineq:pkbounded}, we have:
\begin{equation*}
\|Az_k-b\|\overset{\eqref{eq:dual_update2}}{=}\frac{\|p_k-p_{k-1}\|}{c_k} \overset{k \in \cal C_l}{\leq}  \frac{\|p_k\|+\|p_{k-1}\|}{\tilde c_{l}}\overset{\eqref{ineq:pkbounded}}{\leq}\frac{2\kappa_p}{\tilde c_l}
\end{equation*}
which immediately proves \eqref{feasibility bound}.

(b) 
Recall from \eqref{def:ctilde-l} that $\cal C_{l} := \{k : c_k=\tilde c_l:= 2^{l-1} c_1 \}$. Then, using the Cauchy-Schwarz inequality,
the definition of the Lagrangian
function in \eqref{lagrangian2}, the definition of $S$ in \eqref{S and kappad}, relations \eqref{ineq:pkbounded} and \eqref{feasibility bound},
and the fact that $\tilde c_l \ge c_1$,
we have
\begin{align}\label{upper bound lagrange2}
\mathcal L_{\tilde c_{l}}(z_{\hat k},p_{\hat k-1})&\le S+\|p_{\hat k-1}\| \|Az_{\hat k}-b\| +\frac{\tilde c_l}{2}\|Az_{\hat k}-b\|^2
\overset{\eqref{feasibility bound}}{\leq}  S+\|p_{\hat k-1}\| \left(\frac{2\kappa_p}{\tilde c_{l}} \right)+\frac{2\kappa^2_p}{\tilde c_{l}}
\overset{\eqref{ineq:pkbounded}}{\leq} S+\frac{4\kappa_p^2}{c_1}.
\end{align}
Let $k \in \mathcal C_l$ be such that $k \geq \hat k+1$. Using the definition of $\phi_{*}$ in \eqref{phi* and lambda bound and bar d} and completing the square, we have:
\begin{equation}\label{lower bound lagrange}
 \cal L_{\tilde c_l}(z_k,p_k) - \phi_*
    \ge \mathcal L_{\tilde c_l}(z_k,p_k) - (f+h)(z_k) 
    = \frac{1}{2}\left\|\frac{p_k}{\sqrt{\tilde c_l}}+\sqrt{\tilde c_l}(Az_k-b)\right\|^2-\frac{\|p_k\|^2}{2\tilde c_l}\geq -\frac{\|p_k\|^2}{2\tilde c_l}.
\end{equation}
Hence, it follows from 
the definition
of $\Delta_k$ in \eqref{eq:stopCOndition-deltak} and relations \eqref{upper bound lagrange2} and \eqref{lower bound lagrange} that
\begin{align*}
\Delta_k = \frac{1}{\sum_{i=\hat k +1}^{k}\lambda_i}\left({\cal L}_{\tilde c_l}(z_{\hat k},p_{\hat k-1})- {\cal L}_{\tilde c_l}(z_k,p_k)-\frac{\|p_k\|^{2}}{2\tilde c_l}\right)\leq \frac{1}{\sum_{i=\hat k+1}^{k}\lambda_i}\left(S+\frac{4\kappa^2_p}{ c_1}-\phi_*\right).
\end{align*}
Thus, \eqref{delta k bound} immediately follows from the definition of $\kappa_d$ in \eqref{S and kappad}.
\end{proof}

The following result establishes
bounds on the number of ACG and outer iterations performed during an AS-PAL cycle and shows that AS-PAL outputs a $(\hat\rho,\hat\eta)$-approximate stationary solution of \eqref{eq:main_prb}
within a logarithmic number of cycles.

\begin{proposition}\label{lem:StaticIPAAL}
The following statements about AS-PAL hold: 
\begin{itemize}
\item[(a)] every cycle  performs 
at most 
\begin{equation}\label{outer iterations bound}
\left\lceil2+\frac{2 C_\sigma\kappa_d}{\underline \lambda \hat \rho^2} \right\rceil
\end{equation}
outer iterations,
where $\underline \lambda$, $\kappa_d$, and $C_\sigma$ are as in \eqref{phi* and lambda bound and bar d}, \eqref{S and kappad}, and \eqref{def:lamb-C1} respectively;
moreover, if $\overline \lambda$ is such that $\overline{\lam} = \Omega(m_f^{-1})$ and $\log^{+}_{0}(m_f\overline \lambda)\leq {\cal O}(1+\kappa_d/(\underline \lambda \hat \rho^2))$,
then the number of ACG calls within an arbitrary cycle is
${\cal O}(1+ m_f \kappa_d/\hat \rho^2)$;
\item[(b)]
for any cycle $l$ of
AS-PAL, its
penalty parameter
satisfies
$\tilde c_{ l} \le \max\{c_1,2\hat{c}\}$
where $\hat c :=\hat c(\hat \rho, \hat \eta)$ and $\hat c(\hat \rho, \hat \eta)$ is as in \eqref{nablaf and kappap and chat};
as a consequence,
the number of cycles
of
AS-PAL
is bounded by
\begin{equation}\label{log number of cycles}
 \log_1^+\left(\frac{4 \hat c}{c_1}\right)
\end{equation}
where $c_1$ is the initial penalty parameter for AS-PAL.
\end{itemize}
\end{proposition}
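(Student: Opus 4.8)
The plan is to handle the two parts separately, using Lemma~\ref{lem:upperboundDeltak} as the main engine for part~(a) and combining Lemmas~\ref{lem:minwk-Deltak} and~\ref{lem:upperboundDeltak} with the step-3 stopping test for part~(b). For the outer-iteration bound in part~(a), I would exploit the fact that a cycle $\mathcal C_l$ continues exactly as long as the test \eqref{eq:stopCOndition-deltak} in step~4 fails. Hence for every index $k$ strictly before the last one in the cycle, i.e. $\hat k+1\le k\le k_{l+1}-2$, the negation of \eqref{eq:stopCOndition-deltak} gives in particular $\Delta_k>\hat\rho^2/(2C_\sigma)$. Feeding this into the upper bound $\Delta_k\le \kappa_d/\sum_{i=\hat k+1}^k\lambda_i$ from Lemma~\ref{lem:upperboundDeltak}(b) yields $\sum_{i=\hat k+1}^k\lambda_i<2C_\sigma\kappa_d/\hat\rho^2$, and the lower bound $\lambda_i\ge\underline\lambda$ from \eqref{lower lambda bound} then converts this into $k-\hat k<2C_\sigma\kappa_d/(\underline\lambda\hat\rho^2)$. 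Taking $k$ to be the next-to-last index $k_{l+1}-2$ gives $k_{l+1}-k_l<2+2C_\sigma\kappa_d/(\underline\lambda\hat\rho^2)$, which is \eqref{outer iterations bound} after taking the ceiling. That a cycle does terminate is itself a byproduct: since $\sum_{i=\hat k+1}^k\lambda_i\to\infty$ forces $\Delta_k\to 0$, \eqref{eq:stopCOndition-deltak} must eventually hold unless the method already stopped in step~3.

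For the ACG-call count in part~(a), I would split the calls made inside a cycle into the successful ones, of which there is exactly one per outer iteration, hence $\mathcal O(1+\kappa_d/(\underline\lambda\hat\rho^2))$ of them by the first part, and the failed ones, each of which halves $\lambda$. The key point, already established in the proof of Proposition~\ref{ACG facts}(b), is that $\lambda$ is never reset, is only ever halved, and is never halved again once it drops to $1/(2m_f)$ or below; thus the total number of halvings over the entire run, a fortiori within one cycle, is at most $\mathcal O(1+\log_0^+(m_f\overline\lambda))$. The hypothesis $\log_0^+(m_f\overline\lambda)\le\mathcal O(1+\kappa_d/(\underline\lambda\hat\rho^2))$ bounds this by $\mathcal O(1+\kappa_d/(\underline\lambda\hat\rho^2))$, and the hypothesis $\overline\lambda=\Omega(m_f^{-1})$ makes $\underline\lambda=\Theta(m_f^{-1})$, so that $\kappa_d/(\underline\lambda\hat\rho^2)=\Theta(m_f\kappa_d/\hat\rho^2)$, giving the asserted $\mathcal O(1+m_f\kappa_d/\hat\rho^2)$.

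Part~(b) is where I expect the real work. The plan is to show that if $\tilde c_l\ge\hat c$, then the method must stop in step~3 during cycle $l$, so the penalty can never double past $2\hat c$. Concretely, suppose cycle $l$ terminates through step~4 at its final index $k=k_{l+1}-1$, so that \eqref{eq:stopCOndition-deltak} holds there. I would bound $\min_{\hat k+1\le i\le k}\|w_i\|^2$ by the weighted average $\sum\lambda_i\|w_i\|^2/\sum\lambda_i$, which Lemma~\ref{lem:minwk-Deltak} controls by $C_\sigma(\Delta_k+9\kappa_p^2/(\underline\lambda\tilde c_l))$. A two-case analysis on the maximum in \eqref{eq:stopCOndition-deltak} then collapses this: if $\Delta_k\le\hat\rho^2/(2C_\sigma)$ one gets $\min_i\|w_i\|^2\le\hat\rho^2/2+9C_\sigma\kappa_p^2/(\underline\lambda\tilde c_l)$, while if $\Delta_k$ is dominated by the weighted-average term, substituting it back into the bound absorbs the $\Delta_k$ and leaves $\min_i\|w_i\|^2\le 18C_\sigma\kappa_p^2/(\underline\lambda\tilde c_l)$. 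In either case $\tilde c_l\ge 18C_\sigma\kappa_p^2/(\underline\lambda\hat\rho^2)$ forces $\min_i\|w_i\|\le\hat\rho$; simultaneously $\tilde c_l\ge 2\kappa_p/\hat\eta$ and Lemma~\ref{lem:upperboundDeltak}(a) give $\|Az_i-b\|\le\hat\eta$ for every $i$ in the cycle. Since $\hat c=18C_\sigma\kappa_p^2/(\underline\lambda\hat\rho^2)+2\kappa_p/\hat\eta$ dominates both thresholds, $\tilde c_l\ge\hat c$ would make the step-3 test pass at the minimizing index $i^\ast$, contradicting the assumption that the cycle ended by incrementing the penalty. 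Hence any completed cycle has $\tilde c_l<\hat c$, so its successor satisfies $\tilde c_{l+1}=2\tilde c_l<2\hat c$; together with $\tilde c_1=c_1$ this gives $\tilde c_l\le\max\{c_1,2\hat c\}$. The delicate point to get right is the bookkeeping that $i^\ast$ lies inside the cycle and that step~3 is evaluated there before step~4 can act, so that the contradiction is genuine.

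Finally, the bound \eqref{log number of cycles} on the number of cycles follows by taking base-$2$ logarithms of $\tilde c_l=2^{l-1}c_1\le\max\{c_1,2\hat c\}$: the largest admissible $l$ satisfies $2^{l-1}\le\max\{1,2\hat c/c_1\}$, whence $l\le 1+\log_2^+(2\hat c/c_1)\le\log_1^+(4\hat c/c_1)$. The main obstacle throughout is part~(b), specifically the min-over-index argument and its interaction with the step-3 test; parts~(a) and the cycle count are then routine consequences of the $\Delta_k$ estimates already in hand.
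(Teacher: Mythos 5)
Your proposal is correct and follows essentially the same route as the paper: part (a) is the contrapositive of the paper's argument (the failed step-4 test forces $\Delta_k>\hat\rho^2/(2C_\sigma)$, which Lemma~\ref{lem:upperboundDeltak}(b) and \eqref{lower lambda bound} turn into the iteration bound, and the ACG count follows from counting halvings of $\lambda$ via Proposition~\ref{ACG facts}(b)), while part (b) is the same contradiction built from Lemmas~\ref{lem:minwk-Deltak} and~\ref{lem:upperboundDeltak}(a) together with the step-3 criterion and the two-term structure of $\hat c$, merely set up inside the offending cycle rather than by examining the preceding one as the paper does. The only cosmetic difference is your explicit two-case split on the maximum in \eqref{eq:stopCOndition-deltak}, which the paper handles in a single chain of inequalities; the substance is identical.
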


\begin{proof}
(a) Fix a cycle $l$ and let $\hat k = k_l$ denote the first index in $\mathcal C_{l}$ (see \eqref{def C}). If some $ k\in {\mathcal C}_l$ is such that 
\begin{equation}\label{outer iter bound}
k> \hat{k}+\frac{2C_\sigma\kappa_d}{\underline \lambda \hat \rho^2} 
\end{equation}
then
\begin{equation}
\Delta_k\overset{\eqref{delta k bound}}{\leq}\frac{\kappa_d}{\sum_{i=\hat k+1}^{k}\lam_i}\overset{\eqref{lower lambda bound}}{\leq} \frac{\kappa_d}{\underline \lambda(k-\hat k)}\overset{\eqref{outer iter bound}}{\leq}\frac{\hat \rho^2}{2C_\sigma}
\end{equation}
which clearly implies that $\Delta_k$ satisfies inequality \eqref{eq:stopCOndition-deltak}
and hence that the $l$-th cycle ends at or before the $k$-th iteration. Hence, the first part of (a) follows immediately from this conclusion.
To prove the second part, first note that
the number of times $\lam$ is divided by $2$
in step 2 of AS-PAL is at most $\lceil \log_0^+(\overline \lambda/{\underline \lam})/\log 2 \rceil$,
in view of the last conclusion of Proposition~\ref{ACG facts}(b).
This observation, the conclusion of the
first part, the
two conditions imposed on $\overline \lambda$,
and the definition of $\underline \lam$ in \eqref{phi* and lambda bound and bar d},
then imply that
the number
of ACG calls within an arbitrary cycle
is ${\cal O}(1+ m_f \kappa_d/\hat \rho^2)$.


(b) 
Assume by contradiction that $\tilde c_l >\max\{c_1,2\hat c\}$.
Since $\tilde c_1=c_1$
in view of \eqref{def:ctilde-l},
this implies that
$l>1$ and
$\tilde c_l > 2 \hat c$
and hence that
$\tilde c_{l-1} > \hat c$
in view of the fact that
$\tilde c_l = 2 \tilde c_{l-1}$.
Hence, it follows from
the definition of $\hat c:=\hat c(\hat \rho, \hat \eta)$ in \eqref{nablaf and kappap and chat} and
Lemma~\ref{lem:upperboundDeltak}(a) with $l=l-1$ that
for every $k\in \mathcal C_{l-1}$,
\begin{equation}\label{feasibility bound contradiction}
\|Az_k-b\|\overset{\eqref{feasibility bound}}{\leq} \frac{2\kappa_p}{\tilde c_{l-1}}<\frac{2\kappa_p}{\hat c}< \eta.
\end{equation}
This implies that
$\min_{i \in \cal C_{l-1}} \|w_{i}\|>\hat \rho$
in view of the termination criterion of step 3 and
the fact that AS-PAL has not stopped in the $(l-1)$-th cycle.
Letting $\hat k := k_{l-1}$,
this conclusion
together with Lemma~\ref{lem:minwk-Deltak}
with $l=l-1$ then imply that
\[
\hat \rho^2 <
\frac{\sum_{i=\hat k+1}^{k} \lambda_i\|w_i\|^2}{\sum_{i=\hat k+1}^{k} \lambda_i} \le C_{\sigma}\left(\Delta_k + \frac{9\kappa^2_p}{\underline \lambda \tilde c_{l-1}}\right) < C_{\sigma}\left(\Delta_k + \frac{9\kappa^2_p}{\underline \lambda \hat c}\right) \le C_{\sigma}\Delta_{k} + \frac{\hat \rho^2}{2}
\]
where the third inequality follows from the fact that
$\tilde c_{l-1}>\hat c$ and
the fourth one follows the definition of $\hat c:=\hat c(\hat \rho, \hat \eta)$ in \eqref{nablaf and kappap and chat}.
Using this last conclusion, we can easily see that
\eqref{eq:stopCOndition-deltak} is violated for
 every $k \in {\cal C}_{l-1}$ such that $k \ge \hat k+1$, a conclusion
 that contradicts the fact that the $(l-1)$-th cycle terminated.
\end{proof}
We are now ready to prove Theorem~\ref{theor:StaticIPAAL}.

\begin{proof}[of Theorem~\ref{theor:StaticIPAAL}]
First, note that the assumptions that $\overline{\lam} = \Omega(m_f^{-1})$, $\log^{+}_{0}(m_f\overline \lambda)\leq {\cal O}(1+\kappa_d/(\underline \lambda \hat \rho^2))$, the definition of $\underline \lambda$ in
\eqref{phi* and lambda bound and bar d}, and the second conclusion of Proposition~\ref{lem:StaticIPAAL}(a) imply that every cycle of
AS-PAL performs ${\cal O}(1+ m_f \kappa_d/\hat \rho^2)$ ACG calls.
Second, the assumption that $c_1 \leq 4\hat c$ and Proposition~\ref{lem:StaticIPAAL}(b) imply that
$\tilde c_{ l} \le 4\hat c$ and hence that
$\mathcal M(\tilde c_l) \le \mathcal M(\hat c)$ in view of
the definition of $\mathcal M(c)$ in Theorem~\ref{theor:StaticIPAAL}. The result then immediately follows from the above observations, Proposition~\ref{ACG facts}(a), and the bound \eqref{log number of cycles} on the number of cycles performed by AS-PAL.
\end{proof}

\section{Numerical Experiments} \label{sec:numerical}
This section showcases the numerical performance of AS-PAL, nicknamed ASL, against five other benchmark algorithms for solving five classes of linearly-constrained SNCO problems. It contains five subsections. Each subsection reports the numerical results on a different class of linearly-constrained SNCO problems.

We have implemented a more aggressive variant of ASL, whose details we now describe. First, the variant differs from ASL in that it allows
the prox stepsize to be doubled in step 5 of any iteration if it has not been halved in step 2 and the number of iterations performed by its ACG call in step 1 has not exceeded a pre-specified number.
Second, since the prox stepsize is allowed to increase in this variant, the initial prox stepsize is taken to be relatively small.
Third, our implementation chooses the following values for the input parameters of ASL:
\[
\quad\sigma=0.1, \quad \mu=1/4, \quad \chi=0.5005, \quad \beta=1.25, \quad p_{0}=0.
\]
Finally, for $k \geq 1$, if $L_k$ is the last estimated Lipschitz constant generated by ADAP-FISTA at the end of step 2 of the $k^{\rm th}$ iteration of ASL, then we take $M^{k+1}_0=L_k$.

Now, we describe the implementation details of the five benchmark algorithms which we compare our algorithm with. We consider the iALM method of \cite{ImprovedShrinkingALM20}, two variants of the S-prox-ALM of \cite{ErrorBoundJzhang-ZQLuo2020, ADMMJzhang-ZQLuo2020} (nicknamed SPA1 and SPA2), the inexact proximal augmented Lagrangian method of \cite{RenWilmelo2020iteration} (nicknamed IPL), and the relaxed quadratic penalty method of \cite{WJRComputQPAIPP} (nicknamed RQP). The implementation of iALM chooses the parameters  $\sigma$, $\beta_0$, $w_0$, $\textbf{y}^0$, and $\gamma_k$ as
\[
\sigma=5,\quad\beta_{0}=1 ,\quad w_{0}=1,\quad\boldsymbol{y}^{0}=0,\quad\gamma_{k}=\frac{\left(\log2\right)\|Ax^{1}\|}{(k+1)\left[\log(k+2)\right]^{2}}, 
\]
for every $k \geq 1$. Furthermore, the implementation of iALM uses the ACG subroutine called APG. The starting point for the $k^{\rm th}$ APG call is the prox center for the $k^{\rm th}$ prox subproblem.  The implementations of SPA1 and SPA2 also choose the parameters $\alpha$, $p$, $c$, $\beta$, $y_0$, and $z_0$ as
\[
\alpha=\frac{\Gamma}{4},\quad p=2(L_{f}+\Gamma\|A\|^{2}),\quad c=\frac{1}{2(L_{f}+\Gamma\|A\|^{2})},\quad\beta=0.5,\quad y_{0}=0,\quad z_{0}=x_{0},
\]
where $\Gamma=1$ in SPA1 and $\Gamma=10$ in SPA2. The implementation of IPL sets $\sigma=0.3$, initial penalty parameter $c_1=1$, and constant prox parameter $\lambda=1/(2m_f)$. RQP uses the AIPPv2 variant in \cite{WJRComputQPAIPP} with initial prox stepsize $\lambda=1/m_f$, $\sigma=0.3$, and parameters $(\theta, \tau)=(4,10\left[\lambda L_f+1\right])$. Finally, note that IPL and RQP solve each prox subproblem using the ACG variant in \cite{MontSvaiter_fista} with an adaptive line search for the ACG variant's stepsize parameter as described in \cite{KongThesis2021}.

We describe the type of solution each of the methods aims to find. That is, given a linear operator $A$, functions $f$ and $h$ satisfying assumptions described in Subsection~\ref{subsec:prb_of_interest}, an initial point $z_0 \in \mathcal H$, and tolerance pair $(\hat \rho, \hat \eta)\in\R_{++}^{2}$, each of the methods aims to find a triple $(z,p,w)$ satisfying:
\begin{equation}\label{solution type comp study}
w\in\nabla f(z)+\partial h(z)+A^{*}p, \quad \frac{\|w\|}{1+\|\nabla f(z_{0})\|}\leq \hat\rho ,\quad\frac{\|Az-b\|}{1+\|Az_{0}-b\|}\leq\hat\eta,
\end{equation}
where $\|\cdot\|$ signifies the Euclidean norm when solving vector problems and the Frobenius norm when solving matrix problems. Note that SPA1 and SPA2 are only included for comparison in the experiments of Subsection~\ref{subsec:nonconvex_qpsimplex} and Subsection~\ref{subsec:nonconvex_qpbox} since they are only guaranteed to converge when $h$ is the indicator function of a polyhedron.

The tables below report the runtimes and the total number of ACG iterations needed to find a triple satisfying \eqref{solution type comp study}. The bold numbers in the tables of this section indicate the algorithm that performed the best for that particular metric (i.e. runtime or ACG iterations).
It will be seen in the following subsections that the two adaptive methods ASL
and RQP are the most consistent ones among
the six considered.
More specifically,
within the specified time limit for each problem class,
ASL converged in all instances considered in our experiments while RQP converged in 90\% of them. To compare these two methods on a particular problem class more closely,
we also report in each table caption the following average time ratio (ATR) between
ASL and RQP defined as
\begin{equation}\label{speed comparison}
ATR=\frac{1}{N}\sum_{i=1}^{N}a_i/r_i,   
\end{equation}
where $N$ is the number of class instances that both methods were able to solve and $a_i$ and $r_i$ are the runtimes of ASL and RQP for instance $i$, respectively.

Finally, we note that all experiments were performed in MATLAB 2020a and run on a Macbook Pro with 8-core Intel Core i9 processor and 32 GB of memory. All codes for these experiments are also available online\footnote{See https://github.com/asujanani6/AS-PAL}.

\subsection{Nonconvex QP}\label{subsec:nonconvex_qpsimplex}
Given a pair of dimensions $(\ell,n)\in\mathbb{N}^{2}$, a scalar
pair $(\tau_1, \tau_2)\in\R_{++}^{2}$, matrices $A,C\in\R^{\ell\times n}$
and $B\in\r^{n\times n}$, positive diagonal matrix $D\in\R^{n\times n}$,
and a vector pair $(b,d)\in\R^{\ell}\times\R^{\ell}$, we consider
the problem 
\begin{align*}
\min_{z}\  & \left[f(z):=-\frac{\tau_{1}}{2}\|DBz\|^{2}+\frac{\tau_{2}}{2}\|{ C}z-d\|^{2}\right]\\
\text{s.t.}\  & Az=b,\quad z\in\Delta^{n},
\end{align*}
where $\Delta^{n}:=\{x\in\ \Re_+^{n}:\sum_{i=1}^{n}x_{i}=1\}$.
\begin{table}[!tbh]
\begin{centering}
\begin{tabular}{>{\centering}p{0.6cm}>{\centering}p{0.1cm}|>{\centering}p{0.7cm}>{\centering}p{0.55cm}>{\centering}p{0.55cm}>{\centering}p{0.55cm}>{\centering}p{0.7cm}>{\centering}p{0.9cm}|>{\centering}p{0.9cm}>{\centering}p{0.9cm}>{\centering}p{0.9cm}>{\centering}p{0.95cm}>{\centering}p{0.9cm}>{\centering}p{1.05cm}}
\multicolumn{2}{c|}{\textbf{\scriptsize{}Parameters}} & \multicolumn{6}{c|}{\textbf{\scriptsize{}Iteration Count}} & \multicolumn{6}{c}{\textbf{\scriptsize{}Runtime (seconds)}}\tabularnewline
\hline 
{\scriptsize{}$m_f$} & {\scriptsize{}$L_{f}$} & {\scriptsize{}iALM} & {\scriptsize{}IPL} & {\scriptsize{}RQP} & {\scriptsize{}ASL} & {\scriptsize{}SPA1} & {\scriptsize{}SPA2} & {\scriptsize{}iALM} & {\scriptsize{}IPL} & {\scriptsize{}RQP} & {\scriptsize{}ASL} & {\scriptsize{}SPA1} & {\scriptsize{}SPA2} \tabularnewline
\hline 
\hline 
{\scriptsize{}$10^0$} & {\scriptsize{}$10^1$} & {\scriptsize{}176005} & {\scriptsize{}11202} & \textbf{\scriptsize{}3905} & {\scriptsize{}4762} & {\scriptsize{}*} & {\scriptsize{}*} & {\scriptsize{}4410.98} & {\scriptsize{}373.54} & \textbf{\scriptsize{}111.11} & {\scriptsize{}232.89} & {\scriptsize{}*} & {\scriptsize{}*}\tabularnewline

{\scriptsize{}$10^0$} & {\scriptsize{}$10^2$} & {\scriptsize{}109988} & {\scriptsize{}5382} & {\scriptsize{}6065} & \textbf{\scriptsize{}1884} & {\scriptsize{}*} & {\scriptsize{}*}& {\scriptsize{}1878.44} & {\scriptsize{}155.70} & {\scriptsize{}198.38} & \textbf{\scriptsize{}64.50} & {\scriptsize{}*} & {\scriptsize{}*}\tabularnewline

{\scriptsize{}$10^0$} & {\scriptsize{}$10^3$} & {\scriptsize{}57869} & {\scriptsize{}1210} & {\scriptsize{}11216} & \textbf{\scriptsize{}645}  & {\scriptsize{}*} & {\scriptsize{}*} & {\scriptsize{}1607.23} & {\scriptsize{}55.25} & {\scriptsize{}384.88} & \textbf{\scriptsize{}20.39} & {\scriptsize{}*} & {\scriptsize{}*}\tabularnewline
\hline 

{\scriptsize{}$10^1$} & {\scriptsize{}$10^1$} & {\scriptsize{}236655} & {\scriptsize{}3958} & {\scriptsize{}3171} & \textbf{\scriptsize{}1236} & {\scriptsize{}*} & {\scriptsize{}*} & {\scriptsize{}4785.86} & {\scriptsize{}144.45} & {\scriptsize{}88.10} & \textbf{\scriptsize{}39.36} & {\scriptsize{}*} & {\scriptsize{}*}\tabularnewline

{\scriptsize{}$10^1$} & {\scriptsize{}$10^2$} & {\scriptsize{}195714} & {\scriptsize{}2319} & {\scriptsize{}6701} & \textbf{\scriptsize{}1051} & {\scriptsize{}*} & {\scriptsize{}*} & {\scriptsize{}3582.34} & {\scriptsize{}84.14} & {\scriptsize{}217.43} & \textbf{\scriptsize{}34.07} & {\scriptsize{}*} & {\scriptsize{}*}\tabularnewline

{\scriptsize{}$10^1$} & {\scriptsize{}$10^3$} & {\scriptsize{}98865} & {\scriptsize{}1171}  & {\scriptsize{}7583} & \textbf{\scriptsize{}644}  & {\scriptsize{}*} & {\scriptsize{}*} & {\scriptsize{}2073.41} & {\scriptsize{}41.98} & {\scriptsize{}234.67} & \textbf{\scriptsize{}20.55} & {\scriptsize{}*} & {\scriptsize{}*}\tabularnewline

{\scriptsize{}$10^1$} & {\scriptsize{}$10^4$} & {\scriptsize{}87595} & {\scriptsize{}6506}  & {\scriptsize{}15637} & \textbf{\scriptsize{}924} & {\scriptsize{}*} & {\scriptsize{}*} & {\scriptsize{}3272.03} & {\scriptsize{}280.97} & {\scriptsize{}403.79} & \textbf{\scriptsize{}29.51} & {\scriptsize{}*} & {\scriptsize{}*}\tabularnewline
\hline 

{\scriptsize{}$10^2$} & {\scriptsize{}$10^3$} & {\scriptsize{}366178} & {\scriptsize{}*} & {\scriptsize{}7647} & \textbf{\scriptsize{}778} & {\scriptsize{}92872} & {\scriptsize{}*} & {\scriptsize{}6637.79} & {\scriptsize{}*} &  {\scriptsize{}207.66} & \textbf{\scriptsize{}25.22} & {\scriptsize{}3290.91} & {\scriptsize{}*}\tabularnewline

{\scriptsize{}$10^2$} & {\scriptsize{}$10^4$} & {\scriptsize{}248673} & {\scriptsize{}*} & {\scriptsize{}10421} & \textbf{\scriptsize{}1375} & {\scriptsize{}120882} & {\scriptsize{}257973} & {\scriptsize{}4329.35} & {\scriptsize{}*} & {\scriptsize{}283.96} & \textbf{\scriptsize{}45.27} & {\scriptsize{}5363.80} & {\scriptsize{}10644.96}\tabularnewline

{\scriptsize{}$10^2$} & {\scriptsize{}$10^5$} & {\scriptsize{}130351} & {\scriptsize{}19887} & {\scriptsize{}16250} & \textbf{\scriptsize{}2410} & {\scriptsize{}205483} & {\scriptsize{}213369} & {\scriptsize{}2310.50} & {\scriptsize{}561.16} & {\scriptsize{}447.53} & \textbf{\scriptsize{}80.98} & {\scriptsize{}9317.19} & {\scriptsize{}7548.79}\tabularnewline
\hline

{\scriptsize{}$10^3$} & {\scriptsize{}$10^3$} & {\scriptsize{}363915} & {\scriptsize{}*} & {\scriptsize{}4589} & \textbf{\scriptsize{}2001}  & {\scriptsize{}*} & {\scriptsize{}*} & {\scriptsize{}8111.85} & {\scriptsize{}*}  & {\scriptsize{}136.56} & \textbf{\scriptsize{}71.45} & {\scriptsize{}*} & {\scriptsize{}*}\tabularnewline

{\scriptsize{}$10^3$} & {\scriptsize{}$10^4$} & {\scriptsize{}344723} & {\scriptsize{}*} & {\scriptsize{}6023} & \textbf{\scriptsize{}4055} & {\scriptsize{}*} & {\scriptsize{}158622} & {\scriptsize{}6949.95} & {\scriptsize{}*} & {\scriptsize{}596.66} & \textbf{\scriptsize{}168.01} & {\scriptsize{}*} & {\scriptsize{}6136.37} \tabularnewline

{\scriptsize{}$10^3$} & {\scriptsize{}$10^5$} & {\scriptsize{}291006} & {\scriptsize{}16455} & {\scriptsize{}10067} & \textbf{\scriptsize{}3007} & {\scriptsize{}*} & {\scriptsize{}286333}  & {\scriptsize{}5714.73} & {\scriptsize{}495.64} & {\scriptsize{}279.27} & \textbf{\scriptsize{}107.44} & {\scriptsize{}*} & {\scriptsize{}10761.87} \tabularnewline

{\scriptsize{}$10^3$} & {\scriptsize{}$10^6$} & {\scriptsize{}141115} & {\scriptsize{}21586} & {\scriptsize{}15991} & \textbf{\scriptsize{}2208} & {\scriptsize{}269687} & {\scriptsize{}175752} & {\scriptsize{}2527.15} & {\scriptsize{}610.60} & {\scriptsize{}423.22} & \textbf{\scriptsize{}89.73} & {\scriptsize{}9718.92} & {\scriptsize{}6267.26} \tabularnewline
\end{tabular}
\par\end{centering}
\caption{Iteration counts and runtimes (in seconds) for the Nonconvex QP
problem in Subsection~\ref{subsec:nonconvex_qpsimplex}. The tolerances are set to $10^{-4}$. Entries marked with * did not converge in the time limit of $10800$ seconds. The ATR metric is $0.3644$. \label{tab104:qpsimplex}}
\end{table}

\begin{table}[!tbh]
\begin{centering}
\begin{tabular}{>{\centering}p{0.7cm}>{\centering}p{0.8cm}|>{\centering}p{1.4cm}>{\centering}p{1.4cm}>{\centering}p{1.2cm}|>{\centering}p{1.4cm}>{\centering}p{1.4cm}>{\centering}p{1.4cm}}
\multicolumn{2}{c|}{\textbf{\scriptsize{}Parameters}} & \multicolumn{3}{c|}{\textbf{\scriptsize{}Iteration Count}} & \multicolumn{3}{c}{\textbf{\scriptsize{}Runtime (seconds)}}\tabularnewline
\hline 
{\normalsize{}$m_f$} & {\normalsize{}$L_{f}$} & {\normalsize{}iALM} & {\normalsize{}RQP} & {\normalsize{}ASL}& {\normalsize{}iALM} & {\normalsize{}RQP} & {\normalsize{}ASL}\tabularnewline
\hline 
\hline 
{\normalsize{}$10^0$} & {\normalsize{}$10^1$} & {\normalsize{}591803} & {\normalsize{}23935} & \textbf{\normalsize{}8276} & {\normalsize{}9779.56} & {\normalsize{}599.52} & \textbf{\normalsize{}419.69}\tabularnewline

{\normalsize{}$10^0$} & {\normalsize{}$10^2$} & {\normalsize{}698270} & {\normalsize{}62409} & \textbf{\normalsize{}2474} & {\normalsize{}11336.43} & {\normalsize{}1579.43} & \textbf{\normalsize{}87.09}\tabularnewline

{\normalsize{}$10^0$} & {\normalsize{}$10^3$} & {\normalsize{}551623} & {\normalsize{}84314} & \textbf{\normalsize{}959} & {\normalsize{}9146.99} & {\normalsize{}2232.40} & \textbf{\normalsize{}31.16}\tabularnewline
\hline 

{\normalsize{}$10^1$} & {\normalsize{}$10^1$} & {\normalsize{}*} & {\normalsize{}25312} & \textbf{\normalsize{}1628} & {\normalsize{}*} & {\normalsize{}703.17} & \textbf{\normalsize{}66.27}\tabularnewline

{\normalsize{}$10^1$} & {\normalsize{}$10^2$} & {\normalsize{}*} & {\normalsize{}53161} & \textbf{\normalsize{}1793} & {\normalsize{}*} & {\normalsize{}3386.99} & \textbf{\normalsize{}77.04}\tabularnewline

{\normalsize{}$10^1$} & {\normalsize{}$10^3$} & {\normalsize{}*} & {\normalsize{}54172} & \textbf{\normalsize{}927} & {\normalsize{}*} & {\normalsize{}1438.63} & \textbf{\normalsize{}34.01}\tabularnewline

{\normalsize{}$10^1$} & {\normalsize{}$10^4$} & {\normalsize{}*} & {\normalsize{}108376} & \textbf{\normalsize{}1477} & {\normalsize{}*} & {\normalsize{}3482.75} & \textbf{\normalsize{}79.91}\tabularnewline
\hline 

{\normalsize{}$10^2$} & {\normalsize{}$10^3$} & {\normalsize{}*} & {\normalsize{}92292} & \textbf{\normalsize{}1251} & {\normalsize{}*} & {\normalsize{}2475.48} & \textbf{\normalsize{}61.80}\tabularnewline

{\normalsize{}$10^2$} & {\normalsize{}$10^4$} & {\normalsize{}*} & {\normalsize{}78775} & \textbf{\normalsize{}1992} & {\normalsize{}*} & {\normalsize{}2116.42} & \textbf{\normalsize{}110.03}\tabularnewline

{\normalsize{}$10^2$} & {\normalsize{}$10^5$} & {\normalsize{}*} & {\normalsize{}137886} & \textbf{\normalsize{}3940} & {\normalsize{}*} & {\normalsize{}3875.34} & \textbf{\normalsize{}219.18}\tabularnewline
\hline

{\normalsize{}$10^3$} & {\normalsize{}$10^3$} & {\normalsize{}*} & {\normalsize{}47491} & \textbf{\normalsize{}2238} & {\normalsize{}*} & {\normalsize{}1280.58} & \textbf{\normalsize{}130.45}\tabularnewline

{\normalsize{}$10^3$} & {\normalsize{}$10^4$} & {\normalsize{}*} & {\normalsize{}49708} & \textbf{\normalsize{}6035} & {\normalsize{}*} & {\normalsize{}596.66} & \textbf{\normalsize{}168.01}\tabularnewline

{\normalsize{}$10^3$} & {\normalsize{}$10^5$} & {\normalsize{}*} & {\normalsize{}52883} & \textbf{\normalsize{}3863} & {\normalsize{}*} & {\normalsize{}1481.81} & \textbf{\normalsize{}220.99}\tabularnewline

{\normalsize{}$10^3$} & {\normalsize{}$10^6$} & {\normalsize{}*} & {\normalsize{}108743} & \textbf{\normalsize{}3396} & {\normalsize{}*} & {\normalsize{}4083.58} & \textbf{\normalsize{}179.08}\tabularnewline
\end{tabular}
\par\end{centering}
\caption{Iteration counts and runtimes (in seconds) for the Nonconvex QP
problem in Subsection~\ref{subsec:nonconvex_qpsimplex}. The tolerances are set to $10^{-6}$. Entries marked with * did not converge in the time limit of $21600$ seconds. The ATR metric is $0.1173$.\label{tab106:qpsimplex}}
\end{table}

For our experiments in this subsection, we choose dimensions $(l,n)=(20,1000)$ and generate the matrices $A$, $B$, and $C$ to be fully dense. The entries
of $A$, $B$, $C$, and $d$ (resp. $D$) are generated by sampling
from the uniform distribution ${\cal U}[0,1]$ (resp. ${\cal U}[1,1000]$). We generate the vector $b$ as $b=A(\boldsymbol{e}/n)$ where $\boldsymbol{e}$
denotes the vector of all ones. The initial starting point $z_{0}$
is generated as $z^{*}/\sum_{i=1}^{n}z^{*}_{i}$, where the
entries of $z^{*}$ are sampled from the ${\cal U}[0,1]$ distribution. Finally, we choose $(\tau_1, \tau_2)\in\R_{++}^{2}$ so that $L_{f}=\lam_{\max}(\nabla^{2}f)$
and $m_{f}=-\lam_{\min}(\nabla^{2}f)$ are the various values given in the tables of this subsection.

We now describe the specific parameters that ASL, RQP, and iALM choose for this class of problems. Both ASL and RQP choose the initial penalty parameter, $c_1=1$. ASL allows the prox stepsize to be doubled at the end of an iteration if the number of iterations by its ACG call does not exceed $75$. ASL also takes $M^1_0$ defined in its step 1 to be 100 and the initial prox stepsize to be $20/m_f$. Finally, the auxillary parameters of iALM are given by:\[B_i=\|a_i\|, \quad  L_i=0, \quad \rho_i=0 \quad \forall i \geq 1,\]
where $a_i$ is the $i^{\rm th}$ row of $A$. 

The numerical results are presented in two tables, Table~\ref{tab104:qpsimplex} and Table~\ref{tab106:qpsimplex}. The first table, Table~\ref{tab104:qpsimplex}, compares ASL with all five benchmark algorithms namely, iALM, IPL, RQP, SPA1, and SPA2. The tolerances are set as $\hat \rho= \hat \eta=10^{-4}$ and a time limit of $10800$ seconds, or 3 hours, is imposed. Table~\ref{tab106:qpsimplex} presents the same exact instances as Table~\ref{tab104:qpsimplex} but now with tolerances set as $\hat \rho=\hat \eta=10^{-6}$ and a time limit of $21600$ seconds, or $6$ hours. Table~\ref{tab106:qpsimplex} only compares ASL with iALM and RQP since these were the only two other algorithms to converge for every instance with tolerances set at $10^{-4}$. Entries marked with * did not converge in the time limit.

\subsection{Nonconvex QP with Box Constraints}\label{subsec:nonconvex_qpbox}
\begin{table}[!tbh]
\begin{centering}
\begin{tabular}{>{\centering}p{0.1cm}>{\centering}p{0.3cm}>{\centering}p{0.3cm}|>{\centering}p{0.75cm}>{\centering}p{0.6cm}>{\centering}p{0.75cm}>{\centering}p{0.6cm}>{\centering}p{0.8cm}>{\centering}p{0.9cm}|>{\centering}p{0.75cm}>{\centering}p{0.7cm}>{\centering}p{0.8cm}>{\centering}p{0.6cm}>{\centering}p{0.7cm}>{\centering}p{1.0cm}}
\multicolumn{3}{c|}{\textbf{\scriptsize{}Parameters}} & \multicolumn{6}{c|}{\textbf{\scriptsize{}Iteration Count}} & \multicolumn{6}{c}{\textbf{\scriptsize{}Runtime (seconds)}}\tabularnewline
\hline 
{\scriptsize{}$r$} & {\scriptsize{}$m_f$} & {\scriptsize{}$L_{f}$} & {\scriptsize{}iALM} & {\scriptsize{}IPL} & {\scriptsize{}RQP} & {\scriptsize{}ASL}& {\scriptsize{}SPA1} & {\scriptsize{}SPA2} & {\scriptsize{}iALM} & {\scriptsize{}IPL} & {\scriptsize{}RQP} & {\scriptsize{}ASL} & {\scriptsize{}SPA1}& {\scriptsize{}SPA2}\tabularnewline
\hline 
\hline 
{\scriptsize{}5} & {\scriptsize{}$10^0$} & {\scriptsize{}$10^1$} & {\scriptsize{}203310} & {\scriptsize{}11274} & {\scriptsize{}49512} & \textbf{\scriptsize{}7247}& {\scriptsize{}205576} & {\scriptsize{}1943184} & {\scriptsize{}226.93} & {\scriptsize{}17.93} & {\scriptsize{}92.43} & \textbf{\scriptsize{}1.69} & {\scriptsize{}335.91}& {\scriptsize{}2879.25}\tabularnewline

{\scriptsize{}10} & {\scriptsize{}$10^0$} & {\scriptsize{}$10^1$} & {\scriptsize{}221433} & {\scriptsize{}9170} & {\scriptsize{}70736}  & \textbf{\scriptsize{}7043}& {\scriptsize{}128567} & {\scriptsize{}1176352} & {\scriptsize{}334.57} & {\scriptsize{}14.29} & {\scriptsize{}132.59} & \textbf{\scriptsize{}1.76} & {\scriptsize{}240.07}& {\scriptsize{}2139.45}\tabularnewline

{\scriptsize{}20} & {\scriptsize{}$10^0$} & {\scriptsize{}$10^1$} & {\scriptsize{}192970} &{\scriptsize{}8363} & {\scriptsize{}58980} & \textbf{\scriptsize{}5469}& {\scriptsize{}154035} & {\scriptsize{}1403641} & {\scriptsize{}307.75} & {\scriptsize{}14.59} & {\scriptsize{}144.06} & \textbf{\scriptsize{}1.31} & {\scriptsize{}374.16}& {\scriptsize{}2295.86}\tabularnewline

\hline

{\scriptsize{}1} & {\scriptsize{}$10^1$} & {\scriptsize{}$10^2$} & {\scriptsize{}465159} &{\scriptsize{}*} & {\scriptsize{}326336} & \textbf{\scriptsize{}4509}& {\scriptsize{}133522} & {\scriptsize{}303003} & {\scriptsize{}858.38} & {\scriptsize{}*} & {\scriptsize{}1156.69} & \textbf{\scriptsize{}1.17} & {\scriptsize{}213.68}& {\scriptsize{}524.57}\tabularnewline

{\scriptsize{}2} & {\scriptsize{}$10^1$} & {\scriptsize{}$10^2$} & {\scriptsize{}862136} & {\scriptsize{}*} & {\scriptsize{}399982} & \textbf{\scriptsize{}8453}& {\scriptsize{}64280} & {\scriptsize{}447451} & {\scriptsize{}1141.23} & {\scriptsize{}*} & {\scriptsize{}814.19} & \textbf{\scriptsize{}2.01} & {\scriptsize{}107.55}& {\scriptsize{}693.07}\tabularnewline

{\scriptsize{}5} & {\scriptsize{}$10^1$} & {\scriptsize{}$10^2$} & {\scriptsize{}1857919} &{\scriptsize{}*} & {\scriptsize{}174005} & \textbf{\scriptsize{}8320}& {\scriptsize{}106715} & {\scriptsize{}488965} & {\scriptsize{}2476.33} & {\scriptsize{}*} & {\scriptsize{}394.47} &  \textbf{\scriptsize{}2.11} & {\scriptsize{}238.12}& {\scriptsize{}879.75}\tabularnewline

\hline
{\scriptsize{}1} & {\scriptsize{}$10^1$} & {\scriptsize{}$10^3$} & {\scriptsize{}351468} &{\scriptsize{}*} & {\scriptsize{}47007} & \textbf{\scriptsize{}8438}& {\scriptsize{}47583} & {\scriptsize{}123195} & {\scriptsize{}510.028} & {\scriptsize{}*} & {\scriptsize{}81.74} & \textbf{\scriptsize{}2.00} & {\scriptsize{}65.33}& {\scriptsize{}166.48}\tabularnewline

{\scriptsize{}2} & {\scriptsize{}$10^1$} & {\scriptsize{}$10^3$} & {\scriptsize{}368578} & {\scriptsize{}*} & {\scriptsize{}69875} & \textbf{\scriptsize{}6200}& {\scriptsize{}96971} & {\scriptsize{}161433} & {\scriptsize{}481.14} & {\scriptsize{}*} & {\scriptsize{}129.77} & \textbf{\scriptsize{}1.58} & {\scriptsize{}123.39}& {\scriptsize{}198.84}\tabularnewline

{\scriptsize{}5} & {\scriptsize{}$10^1$} & {\scriptsize{}$10^3$} & {\scriptsize{}280346} & {\scriptsize{}*} & {\scriptsize{}116988} & \textbf{\scriptsize{}5218}& {\scriptsize{}272448} & {\scriptsize{}161327} & {\scriptsize{}329.16} & {\scriptsize{}*} & {\scriptsize{}232.13} & \textbf{\scriptsize{}1.24} & {\scriptsize{}361.41}& {\scriptsize{}216.67}\tabularnewline

\hline 

{\scriptsize{}1} & {\scriptsize{}$10^2$} & {\scriptsize{}$10^3$} & {\scriptsize{}727587} & {\scriptsize{}*} & {\scriptsize{}104411} & \textbf{\scriptsize{}4200}& {\scriptsize{}*} & {\scriptsize{}112604} & {\scriptsize{}908.05} & {\scriptsize{}*} & {\scriptsize{}205.03} & \textbf{\scriptsize{}1.15} & {\scriptsize{}*}& {\scriptsize{}154.60}\tabularnewline

{\scriptsize{}2} & {\scriptsize{}$10^2$} & {\scriptsize{}$10^3$} & {\scriptsize{}964734} & {\scriptsize{}21472} &  {\scriptsize{}130903} & \textbf{\scriptsize{}6432}& {\scriptsize{}*} & {\scriptsize{}53266} & {\scriptsize{}1225.22} & {\scriptsize{}44.19} & {\scriptsize{}253.02} & \textbf{\scriptsize{}1.56} & {\scriptsize{}*}& {\scriptsize{}74.85}\tabularnewline

{\scriptsize{}5} & {\scriptsize{}$10^2$} & {\scriptsize{}$10^3$} & {\scriptsize{}705884} & {\scriptsize{}11709} & {\scriptsize{}117945} & \textbf{\scriptsize{}5137}& {\scriptsize{}*} & {\scriptsize{}47237} & {\scriptsize{}890.93} & {\scriptsize{}25.93} & {\scriptsize{}226.21} & \textbf{\scriptsize{}1.29} & {\scriptsize{}*}& {\scriptsize{}65.84}\tabularnewline

\hline 
{\scriptsize{}1} & {\scriptsize{}$10^2$} & {\scriptsize{}$10^4$} & {\scriptsize{}576627} & {\scriptsize{}255622} & {\scriptsize{}100193} & \textbf{\scriptsize{}7796}& {\scriptsize{}155586} & {\scriptsize{}183307} & {\scriptsize{}864.79} & {\scriptsize{}575.34} & {\scriptsize{}200.17} & \textbf{\scriptsize{}1.98} & {\scriptsize{}232.28}& {\scriptsize{}274.50}\tabularnewline

{\scriptsize{}2} & {\scriptsize{}$10^2$} & {\scriptsize{}$10^4$} & {\scriptsize{}1028921} & {\scriptsize{}29123} & {\scriptsize{}165257} & \textbf{\scriptsize{}7048}& {\scriptsize{}158192} & {\scriptsize{}196930} & {\scriptsize{}1477.99} & {\scriptsize{}57.82} & {\scriptsize{}314.62} & \textbf{\scriptsize{}1.79} & {\scriptsize{}256.01}& {\scriptsize{}308.35}\tabularnewline

{\scriptsize{}5} & {\scriptsize{}$10^2$} & {\scriptsize{}$10^4$} & {\scriptsize{}652822} & {\scriptsize{}65523} & {\scriptsize{}86597} & \textbf{\scriptsize{}9471}& {\scriptsize{}144334} & {\scriptsize{}181157} & {\scriptsize{}1032.59} & {\scriptsize{}116.36} & {\scriptsize{}169.88} & \textbf{\scriptsize{}2.22} & {\scriptsize{}223.96}& {\scriptsize{}274.79}\tabularnewline

\hline 
{\scriptsize{}5} & {\scriptsize{}$10^3$} & {\scriptsize{}$10^3$} & {\scriptsize{}*} &{\scriptsize{}142961} & {\scriptsize{}225865} & \textbf{\scriptsize{}26333}& {\scriptsize{}*} & {\scriptsize{}*} & {\scriptsize{}*} & {\scriptsize{}253.35} & {\scriptsize{}439.62} & \textbf{\scriptsize{}5.93} & {\scriptsize{}*}& {\scriptsize{}*}\tabularnewline

{\scriptsize{}10} & {\scriptsize{}$10^3$} & {\scriptsize{}$10^3$} & {\scriptsize{}2474551}  & {\scriptsize{}*} & {\scriptsize{}168397} & \textbf{\scriptsize{}14213}& {\scriptsize{}*} & {\scriptsize{}*} & {\scriptsize{}3522.28} & {\scriptsize{}*} & {\scriptsize{}330.62} & \textbf{\scriptsize{}3.27} & {\scriptsize{}*}& {\scriptsize{}*}\tabularnewline

\hline

{\scriptsize{}1} & {\scriptsize{}$10^3$} & {\scriptsize{}$10^4$} & {\scriptsize{}435881}  & {\scriptsize{}71369} & {\scriptsize{}*} & \textbf{\scriptsize{}4724}& {\scriptsize{}*} & {\scriptsize{}*} & {\scriptsize{}667.19} & {\scriptsize{}154.75} & {\scriptsize{}*} & \textbf{\scriptsize{}1.21} & {\scriptsize{}*}& {\scriptsize{}*}\tabularnewline

{\scriptsize{}2} & {\scriptsize{}$10^3$} & {\scriptsize{}$10^4$} & {\scriptsize{}476462}  & {\scriptsize{}23931} & {\scriptsize{}64649} & \textbf{\scriptsize{}8971}& {\scriptsize{}*} & {\scriptsize{}*} & {\scriptsize{}584.73} & {\scriptsize{}39.52} & {\scriptsize{}100.27} & \textbf{\scriptsize{}2.21} & {\scriptsize{}*}& {\scriptsize{}*}\tabularnewline

{\scriptsize{}5} & {\scriptsize{}$10^3$} & {\scriptsize{}$10^4$} & {\scriptsize{}521072}  & {\scriptsize{}9829} & {\scriptsize{}*} & \textbf{\scriptsize{}5943}& {\scriptsize{}*} & {\scriptsize{}*} & {\scriptsize{}649.28} & {\scriptsize{}17.02} & {\scriptsize{}*} & \textbf{\scriptsize{}1.51} & {\scriptsize{}*}& {\scriptsize{}*}\tabularnewline

\hline

{\scriptsize{}1} & {\scriptsize{}$10^3$} & {\scriptsize{}$10^5$} & {\scriptsize{}*}  & {\scriptsize{}347105} & {\scriptsize{}*} & \textbf{\scriptsize{}8952}& {\scriptsize{}*} & {\scriptsize{}142702} & {\scriptsize{}*} & {\scriptsize{}696.61} & {\scriptsize{}*} & \textbf{\scriptsize{}2.18} & {\scriptsize{}*}& {\scriptsize{}231.41}\tabularnewline

{\scriptsize{}2} & {\scriptsize{}$10^3$} & {\scriptsize{}$10^5$} & {\scriptsize{}1436029}  & {\scriptsize{}*} & {\scriptsize{}*} & \textbf{\scriptsize{}9013}& {\scriptsize{}*} & {\scriptsize{}163317} & {\scriptsize{}2222.25} & {\scriptsize{}*} & {\scriptsize{}*} & \textbf{\scriptsize{}2.20} & {\scriptsize{}*}& {\scriptsize{}397.06}\tabularnewline

{\scriptsize{}5} & {\scriptsize{}$10^3$} & {\scriptsize{}$10^5$} & {\scriptsize{}*}  & {\scriptsize{}106935} & {\scriptsize{}*} & \textbf{\scriptsize{}11629}& {\scriptsize{}*} & {\scriptsize{}145047} & {\scriptsize{}*} & {\scriptsize{}276.73} & {\scriptsize{}*} & \textbf{\scriptsize{}2.81} & {\scriptsize{}*}& {\scriptsize{}192.72}\tabularnewline
\end{tabular}
\par\end{centering}
\caption{Iteration counts and runtimes (in seconds) for the Nonconvex QP
problem with box constraints in Subsection~\ref{subsec:nonconvex_qpbox}. The tolerances are set to $10^{-5}$. Entries marked with * did not converge in the time limit of $3600$ seconds. The ATR metric is $0.0102$. \label{tab105:qpbox}}
\end{table}
Given a pair of dimensions $(\ell,n)\in\mathbb{N}^{2}$, a scalar
triple $(r,\tau_1, \tau_2)\in\R_{++}^{3}$, matrices $A,C\in\R^{\ell\times n}$
and $B\in\r^{n\times n}$, positive diagonal matrix $D\in\R^{n\times n}$,
and a vector pair $(b,d)\in\R^{\ell}\times\R^{\ell}$, we consider
the problem 
\begin{align*}
\min_{z}\  & \left[f(z):=-\frac{\tau_{1}}{2}\|DBz\|^{2}+\frac{\tau_{2}}{2}\|{ C}z-d\|^{2}\right]\\
\text{s.t.}\  & Az=b,\\
& -r\leq z_i \leq r, \quad i\in\{1,...,n\}.
\end{align*}

For our experiments in this subsection, we choose dimensions $(l,n)=(20,100)$ and generate the matrices $A$, $B$, and $C$ to be fully dense. The entries
of $A$, $B$, $C$, and $d$ (resp. $D$) are generated by sampling
from the uniform distribution ${\cal U}[0,1]$ (resp. ${\cal U}[1,1000]$). We generate the vector $b$ as $b=A(u)$ where $u$
is a random vector in ${\cal U}[-r,r]^{n}$. The initial starting point $z_{0}$
is generated as a random vector in ${\cal U}[-r,r]^{n}$. We vary $r$ across the different instances. Finally, we choose $(\tau_1, \tau_2)\in\R_{++}^{2}$ so that $L_{f}=\lam_{\max}(\nabla^{2}f)$
and $m_{f}=-\lam_{\min}(\nabla^{2}f)$ are the various values given in the tables of this subsection. 

We now describe the specific parameters that ASL and RQP choose for this class of problems. Both ASL and RQP choose the initial penalty parameter, $c_1=1$. ASL also allows the prox stepsize to be doubled at the end of an iteration if the number of iterations by its ACG call does not exceed $75$. Finally, ASL takes $M^1_0$ defined in its step 1 to be 100 and the initial prox stepsize to be $20/m_f$. 

The numerical results are presented in Table~\ref{tab105:qpbox}. Table~\ref{tab105:qpbox} compares ASL with all five of the benchmark algorithms namely, iALM, IPL, RQP, SPA1, and SPA2. The tolerances are set as $\hat \rho= \hat \eta=10^{-5}$ and a time limit of $3600$ seconds, or 1 hour, is imposed. Entries marked with * did not converge in the time limit.

\subsection{Nonconvex QSDP}\label{subsec:nonconvex_sdpsimplex}
Given a pair of dimensions $(\ell,n)\in\mathbb{N}^{2}$, a scalar
pair $(\tau_1,\tau_2)\in\R_{++}^{2}$, linear operators ${\cal {\cal A}}:\mathbb{S}_{+}^{n}\mapsto\R^{\ell}$,
${\cal {\cal B}}:\mathbb{S}_{+}^{n}\mapsto\R^{n}$, and ${\cal {\cal C}}:\mathbb{S}_{+}^{n}\mapsto\R^{\ell}$
defined by 
\[
\left[{\cal A}(Z)\right]_{i}=\left\langle A_{i},Z\right\rangle ,\quad\left[{\cal B}(Z)\right]_{j}=\left\langle B_{j},Z\right\rangle ,\quad\left[{\cal C}(Z)\right]_{i}=\left\langle C_{i},Z\right\rangle ,
\]
for matrices $\{A_{i}\}_{i=1}^{\ell},\{B_{j}\}_{j=1}^{n},\{C_{i}\}_{i=1}^{\ell}\subseteq\R^{n\times n}$,
positive diagonal matrix $D\in\R^{n\times n}$, and a vector pair
$(b,d)\in\R^{\ell}\times\R^{\ell}$, we consider the
following nonconvex quadratic semidefinite programming  (QSDP) problem: 
\begin{align*}
\min_{Z}\  & \left[f(Z):=-\frac{\tau_{1}}{2}\|D{\cal B}(Z)\|^{2}+\frac{\tau_{2}}{2}\|{\cal C}(Z)-d\|^{2}\right]\\
\text{s.t.}\  & {\cal A}(Z)=b,\quad Z\in P^{n},
\end{align*}
where $P^{n}=\{Z\in\mathbb{S}_{+}^{n}:{\rm trace}\,(Z)=1\}$.
\begin{table}[!tbh]
\begin{centering}
\begin{tabular}{>{\centering}p{0.6cm}>{\centering}p{0.6cm}|>{\centering}p{1.1cm}>{\centering}p{1.1cm}>{\centering}p{1.1cm}>{\centering}p{1.1cm}|>{\centering}p{1.1cm}>{\centering}p{1.1cm}>{\centering}p{1.1cm}>{\centering}p{1.1cm}}
\multicolumn{2}{c|}{\textbf{\scriptsize{}Parameters}} & \multicolumn{4}{c|}{\textbf{\scriptsize{}Iteration Count}} & \multicolumn{4}{c}{\textbf{\scriptsize{}Runtime (seconds)}}\tabularnewline
\hline 
{\small{}$m_f$} & {\small{}$L_{f}$} & {\small{}iALM} & {\small{}IPL} & {\small{}RQP} & {\small{}ASL} & {\small{}iALM} & {\small{}IPL} & {\small{}RQP} & {\small{}ASL}\tabularnewline
\hline 
\hline 
{\small{}$10^0$} & {\small{}$10^1$} & {\small{}230272} & {\small{}27345} & {\small{}9887} & \textbf{\small{}9647} & {\small{}1772.75} & {\small{}322.82} & {\small{}91.62} & \textbf{\small{}85.38}\tabularnewline

{\small{}$10^0$} & {\small{}$10^2$} & {\small{}91421} & {\small{}4575} & {\small{}7085} & \textbf{\small{}1498} & {\small{}516.42} & {\small{}53.13} & {\small{}73.65} & \textbf{\small{}13.28}\tabularnewline

{\small{}$10^0$} & {\small{}$10^3$} & {\small{}113405} & {\small{}1403} & {\small{}9486} & \textbf{\small{}960} & {\small{}587.24} & {\small{}19.54} & {\small{}112.42} & \textbf{\small{}8.32}\tabularnewline

{\small{}$10^0$} & {\small{}$10^4$} & {\small{}393953} & {\small{}3140} & {\small{}10019} & \textbf{\small{}1824} & {\small{}1794.35} & {\small{}31.54} & {\small{}91.49} & \textbf{\small{}15.98}\tabularnewline

{\small{}$10^0$} & {\small{}$10^5$} & {\small{}1938432} & {\small{}16282} & {\small{}15719} & \textbf{\small{}9883} & {\small{}9473.18} & {\small{}166.02} & {\small{}145.12} & \textbf{\small{}85.50}\tabularnewline

\hline
{\small{}$10^1$} & {\small{}$10^2$} & {\small{}347506} & {\small{}*} & {\small{}15971} & \textbf{\small{}4417} & {\small{}1556.07} & {\small{}*} & {\small{}140.53} & \textbf{\small{}38.31}\tabularnewline

{\small{}$10^1$} & {\small{}$10^3$} & {\small{}177264} & {\small{}*} & {\small{}10945} & \textbf{\small{}2151} & {\small{}750.97} & {\small{}*} & {\small{}96.93} & \textbf{\small{}18.98}\tabularnewline

{\small{}$10^1$} & {\small{}$10^4$} & {\small{}129617} & {\small{}1296} & {\small{}9838} & \textbf{\small{}1273} & {\small{}2008.28} & {\small{}15.58} & {\small{}93.88} & \textbf{\small{}11.14}\tabularnewline

{\small{}$10^1$} & {\small{}$10^5$} & {\small{}287924} & {\small{}3410} & {\small{}8040} & \textbf{\small{}2262} & {\small{}1305.24} & {\small{}35.51} & {\small{}75.28} & \textbf{\small{}19.91}\tabularnewline

{\small{}$10^1$} & {\small{}$10^6$} & {\small{}1473676} & {\small{}15855} & {\small{}12696} & \textbf{\small{}10305} & {\small{}7865.52} & {\small{}164.07} & {\small{}120.96} & \textbf{\small{}92.97}\tabularnewline

\hline

{\small{}$10^2$} & {\small{}$10^4$} & {\small{}182388} & {\small{}*} & {\small{}10803} & \textbf{\small{}1261} & {\small{}844.88} & {\small{}*} & {\small{}99.14} & \textbf{\small{}11.55}\tabularnewline

{\small{}$10^2$} & {\small{}$10^6$} & {\small{}450561} & {\small{}4503} & {\small{}10804} & \textbf{\small{}2990} & {\small{}2612.55} & {\small{}59.18} & {\small{}128.30} & \textbf{\small{}25.63}\tabularnewline

{\small{}$10^2$} & {\small{}$10^7$} & {\small{}1034041} & {\small{}20235} & {\small{}14622} & \textbf{\small{}11893} & {\small{}4612.17} & {\small{}207.29} & {\small{}137.38} & \textbf{\small{}104.67}\tabularnewline

\hline
{\small{}$10^3$} & {\small{}$10^4$} & {\small{}552738} & {\small{}*} & {\small{}18530} & \textbf{\small{}1368} & {\small{}2435.47} & {\small{}*} & {\small{}172.45} & \textbf{\small{}12.42}\tabularnewline

{\small{}$10^3$} & {\small{}$10^5$} & {\small{}220303} & {\small{}*} & {\small{}14929} & \textbf{\small{}3543} & {\small{}937.05} & {\small{}*} & {\small{}138.71} & \textbf{\small{}31.34}\tabularnewline

{\small{}$10^3$} & {\small{}$10^7$} & {\small{}371617} & {\small{}5969} & {\small{}11230} & \textbf{\small{}5121} & {\small{}1791.77} & {\small{}56.92} & {\small{}96.31} & \textbf{\small{}44.50}\tabularnewline

{\small{}$10^3$} & {\small{}$10^8$} & {\small{}1634409} & {\small{}23075} & \textbf{\small{}13465} & {\small{}17371} & {\small{}7250.46} & {\small{}245.84} & \textbf{\small{}133.86} & {\small{}149.43}\tabularnewline

\hline
{\small{}$10^4$} & {\small{}$10^5$} & {\small{}450523} & {\small{}54984} & {\small{}18981} & \textbf{\small{}4756} & {\small{}1908.97} & {\small{}529.99} & {\small{}168.37} & \textbf{\small{}42.61}\tabularnewline

{\small{}$10^4$} & {\small{}$10^6$} & {\small{}248709} & {\small{}*} & {\small{}15876} & \textbf{\small{}6293} & {\small{}1055.40} & {\small{}*} & {\small{}143.12} & \textbf{\small{}54.94}\tabularnewline

{\small{}$10^4$} & {\small{}$10^8$} & {\small{}491118} & {\small{}7959} & {\small{}13184} & \textbf{\small{}7187} & {\small{}2230.07} & {\small{}83.00} & {\small{}125.98} & \textbf{\small{}63.00}\tabularnewline

\end{tabular}
\par\end{centering}
\caption{Iteration counts and runtimes (in seconds) for the Nonconvex QSDP
problem in Subsection~\ref{subsec:nonconvex_sdpsimplex}. The tolerances are set to $10^{-5}$. Entries marked with * did not converge in the time limit of $10800$ seconds. The ATR metric is $0.3831$. \label{tab105:sdpsimplex}}
\end{table}

\begin{table}[!tbh]
\begin{centering}
\begin{tabular}{>{\centering}p{0.7cm}>{\centering}p{0.8cm}|>{\centering}p{1.4cm}>{\centering}p{1.4cm}>{\centering}p{1.2cm}|>{\centering}p{1.4cm}>{\centering}p{1.4cm}>{\centering}p{1.4cm}}
\multicolumn{2}{c|}{\textbf{\scriptsize{}Parameters}} & \multicolumn{3}{c|}{\textbf{\scriptsize{}Iteration Count}} & \multicolumn{3}{c}{\textbf{\scriptsize{}Runtime (seconds)}}\tabularnewline
\hline 
{\normalsize{}$m_f$} & {\normalsize{}$L_{f}$} & {\normalsize{}iALM} & {\normalsize{}RQP} & {\normalsize{}ASL}& {\normalsize{}iALM} & {\normalsize{}RQP} & {\normalsize{}ASL}\tabularnewline
\hline 
\hline 
{\normalsize{}$10^0$} & {\normalsize{}$10^1$} & {\normalsize{}555086} & {\normalsize{}35311} & \textbf{\normalsize{}15699} & {\normalsize{}2257.85} & {\normalsize{}333.79} & \textbf{\normalsize{}138.34}\tabularnewline

{\normalsize{}$10^0$} & {\normalsize{}$10^2$} & {\normalsize{}268608} & {\normalsize{}27247} & \textbf{\normalsize{}2130} & {\normalsize{}1091.32} & {\normalsize{}237.73} & \textbf{\normalsize{}18.24}\tabularnewline

{\normalsize{}$10^0$} & {\normalsize{}$10^3$} & {\normalsize{}355922} & {\normalsize{}26981} & \textbf{\normalsize{}2073} & {\normalsize{}1497.22} & {\normalsize{}243.00} & \textbf{\normalsize{}17.59}\tabularnewline

{\normalsize{}$10^0$} & {\normalsize{}$10^4$} & {\normalsize{}1317510} & {\normalsize{}60908} & \textbf{\normalsize{}2453} & {\normalsize{}5523.22} & {\normalsize{}563.85} & \textbf{\normalsize{}21.21}\tabularnewline

{\normalsize{}$10^0$} & {\normalsize{}$10^5$} & {\normalsize{}*} & {\normalsize{}70646} & \textbf{\normalsize{}10479} & {\normalsize{}*} & {\normalsize{}699.03} & \textbf{\normalsize{}91.90}\tabularnewline

\hline 
{\normalsize{}$10^1$} & {\normalsize{}$10^2$} & {\normalsize{}1297322} & {\normalsize{}68257} & \textbf{\normalsize{}7114} & {\normalsize{}5529.71} & {\normalsize{}676.14} & \textbf{\normalsize{}61.71}\tabularnewline

{\normalsize{}$10^1$} & {\normalsize{}$10^3$} & {\normalsize{}526262} & {\normalsize{}41340} & \textbf{\normalsize{}24596} & {\normalsize{}2254.62} & {\normalsize{}381.20} & \textbf{\normalsize{}212.61}\tabularnewline

{\normalsize{}$10^1$} & {\normalsize{}$10^4$} & {\normalsize{}370204} & {\normalsize{}35879} & \textbf{\normalsize{}2098} & {\normalsize{}1565.84} & {\normalsize{}322.52} & \textbf{\normalsize{}18.06}\tabularnewline

{\normalsize{}$10^1$} & {\normalsize{}$10^5$} & {\normalsize{}998029} & {\normalsize{}42708} & \textbf{\normalsize{}3848} & {\normalsize{}4212.47} & {\normalsize{}387.52} & \textbf{\normalsize{}32.55}\tabularnewline

{\normalsize{}$10^1$} & {\normalsize{}$10^6$} & {\normalsize{}*} & {\normalsize{}36575} & \textbf{\normalsize{}10710} & {\normalsize{}*} & {\normalsize{}325.00} & \textbf{\normalsize{}90.22}\tabularnewline

\hline 
{\normalsize{}$10^2$} & {\normalsize{}$10^4$} & {\normalsize{}689898} & {\normalsize{}39912} & \textbf{\normalsize{}1847} & {\normalsize{}2954.75} & {\normalsize{}377.93} & \textbf{\normalsize{}15.83}\tabularnewline

{\normalsize{}$10^2$} & {\normalsize{}$10^6$} & {\normalsize{}1345701} & {\normalsize{}49506} & \textbf{\normalsize{}3658} & {\normalsize{}5725.54} & {\normalsize{}448.12} & \textbf{\normalsize{}31.57}\tabularnewline

{\normalsize{}$10^2$} & {\normalsize{}$10^7$} & {\normalsize{}*} & {\normalsize{}43571} & \textbf{\normalsize{}12300} & {\normalsize{}*} & {\normalsize{}399.37} & \textbf{\normalsize{}111.21}\tabularnewline

\hline
{\normalsize{}$10^3$} & {\normalsize{}$10^4$} & {\normalsize{}1714445} & {\normalsize{}64949} & \textbf{\normalsize{}1611} & {\normalsize{}7243.63} & {\normalsize{}594.67} & \textbf{\normalsize{}13.94}\tabularnewline

{\normalsize{}$10^3$} & {\normalsize{}$10^5$} & {\normalsize{}596094} & {\normalsize{}40706} & \textbf{\normalsize{}3769} & {\normalsize{}2740.11} & {\normalsize{}363.31} & \textbf{\normalsize{}32.95}\tabularnewline

{\normalsize{}$10^3$} & {\normalsize{}$10^7$} & {\normalsize{}1625487} & {\normalsize{}57454} & \textbf{\normalsize{}7867} & {\normalsize{}6691.03} & {\normalsize{}511.35} & \textbf{\normalsize{}68.70}\tabularnewline

{\normalsize{}$10^3$} & {\normalsize{}$10^8$} & {\normalsize{}*} & {\normalsize{}45759} & \textbf{\normalsize{}18245} & {\normalsize{}*} & {\normalsize{}399.79} & \textbf{\normalsize{}163.00}\tabularnewline

\hline
{\normalsize{}$10^4$} & {\normalsize{}$10^5$} & {\normalsize{}1376159} & {\normalsize{}*} & \textbf{\normalsize{}5030} & {\normalsize{}6145.45} & {\normalsize{}*} & \textbf{\normalsize{}43.15}\tabularnewline

{\normalsize{}$10^4$} & {\normalsize{}$10^6$} & {\normalsize{}995529} & {\normalsize{}51540} & \textbf{\normalsize{}6552} & {\normalsize{}4392.18} & {\normalsize{}489.43} & \textbf{\normalsize{}56.81}\tabularnewline

{\normalsize{}$10^4$} & {\normalsize{}$10^8$} & {\normalsize{}1309587} & {\normalsize{}72323} & \textbf{\normalsize{}8096} & {\normalsize{}5634.84} & {\normalsize{}659.91} & \textbf{\normalsize{}71.30}\tabularnewline
\end{tabular}
\par\end{centering}
\caption{Iteration counts and runtimes (in seconds) for the Nonconvex QSDP
problem in Subsection~\ref{subsec:nonconvex_sdpsimplex}. The tolerances are set to $10^{-6}$. Entries marked with * did not converge in the time limit of $14400$ seconds. The ATR metric is $0.1616$. \label{tab106:qsdpsimplex}}
\end{table}

For our experiments in this subsection, we choose dimensions $(l,n)=(30,100)$. The matrices $A_i$, $B_j$, and $C_i$ are generated so that only 5\% of their entries are nonzero. The entries
of $A_i$, $B_j$, $C_i$, and $d$ (resp. $D$) are generated by sampling
from the uniform distribution ${\cal U}[0,1]$ (resp. ${\cal U}[1,1000]$). We generate the vector $b$ as $b=\cal A(E/n)$, where $E$ is the diagonal matrix in $\Re^{n \times n}$ with all ones on the diagonal. The initial starting point $z_{0}$
is generated as a random matrix in ${\mathbb S}_n^+$. The specific procedure for generating it is described in \cite{AIDAL}. Finally, we choose $(\tau_1, \tau_2)\in\R_{++}^{2}$ so that $L_{f}=\lam_{\max}(\nabla^{2}f)$
and $m_{f}=-\lam_{\min}(\nabla^{2}f)$ are the various values given in the tables of this subsection.

We now describe the specific parameters that ASL, RQP, and iALM choose for this class of problems. Both ASL and RQP choose the initial penalty parameter, $c_1=1$. ASL allows the prox stepsize to be doubled at the end of an iteration if the number of iterations by its ACG call does not exceed $75$. ASL also takes $M^1_0$ defined in its step 1 to be 100 and the initial prox stepsize to be $1/(20m_f)$. Finally, the auxillary parameters of iALM are given by:
\[
B_i=\|A_i\|_F, \quad  L_i=0, \quad \rho_i=0 \quad \forall i \geq 1.
\]

The numerical results are presented in two tables, Table~\ref{tab105:sdpsimplex} and Table~\ref{tab106:qsdpsimplex}. The first table, Table~\ref{tab105:sdpsimplex}, compares ASL with three of the benchmark algorithms namely, iALM, IPL, and RQP. The tolerances are set as $\hat \rho= \hat \eta=10^{-5}$ and a time limit of $10800$ seconds, or 3 hours, is imposed. Table~\ref{tab106:qsdpsimplex} presents the same exact instances as Table~\ref{tab105:sdpsimplex} but now with tolerances set as $\hat \rho=\hat \eta=10^{-6}$ and a time limit of $14400$ seconds, or $4$ hours. Table~\ref{tab106:qsdpsimplex} only compares ASL with iALM and RQP since these were the only two other algorithms to converge for every instance with tolerances set at $10^{-5}$. Entries marked with * did not converge in the time limit.

\subsection{Sparse PCA}\label{subsec:SparsePCA}
We consider the sparse principal components analysis problem studied in \cite{NIPS2014_5615}. That is, given integer $k$, positive scalar pair $(\vartheta,b)\in\r_{++}^{2}$,
and matrix $\Sigma\in S_{+}^{n}$, we consider the following sparse principal component analysis (PCA) problem:
\begin{align*}
\min_{\Pi,\Phi}\  & \left\langle \Sigma,\Pi\right\rangle _{F}+\sum_{i,j=1}^{n}q_{\vartheta}(\Phi_{ij})+\vartheta\sum_{i,j=1}^{n}|\Phi_{ij}|\\
\text{s.t.}\  & \Pi-\Phi=0,\quad(\Pi,\Phi)\in{\cal F}^{k}\times\r^{n\times n}
\end{align*}
where ${\cal F}^{k}=\{M\in S_{+}^{n}:0\preceq M\preceq I,\trc M=k\}$
denotes the $k$--Fantope and $q_{\vartheta}$ is the minimax concave penalty
(MCP) function given by
\[
q_{\vartheta}(t):=\begin{cases}
-t^{2}/(2b), & \text{if }|t|\leq b\vartheta,\\
b\vartheta^{2}/2-\vartheta|t|, & \text{if }|t|>b\vartheta,
\end{cases}\quad\forall t\in\r.
\]

For our experiments in this subsection, we choose $\vartheta=100$ and allow $b$ to vary. Observe that the curvature parameters are $m_f=L_f=1/b$.
We also generate the matrix $\Sigma$ according to an eigenvalue
decomposition $\Sigma=P\Lambda P^{T}$, based on a parameter pair
$(s,k)$, where $k$ is as in the problem description and $s$ is
a positive integer. Specifically, we choose $\Lambda=(100,1,...,1)$,
the first column of $P$ to be a sparse vector whose first $s$ entries
are $1/\sqrt{s}$, and the other entries of $P$ to be sampled randomly
from the standard Gaussian distribution. For our experiments, we fix $s=5$ and allow $k$ to vary. Also, for every problem instance, the initial starting
point is chosen as $(\Pi_{0},\Phi_{0})=(D_{k},0)$ where $D_{k}$ is a diagonal
matrix whose first $k$ entries are 1 and whose remaining entries
are 0. 
\begin{table}[!tbh]
\begin{centering}
\begin{tabular}{>{\centering}p{0.6cm}>{\centering}p{0.75cm}>{\centering}p{0.75cm}|>{\centering}p{1.1cm}>{\centering}p{1.1cm}>{\centering}p{1.1cm}>{\centering}p{1.1cm}|>{\centering}p{1.1cm}>{\centering}p{1.1cm}>{\centering}p{1.1cm}>{\centering}p{1.1cm}}
\multicolumn{3}{c|}{\textbf{\scriptsize{}Parameters}} & \multicolumn{4}{c|}{\textbf{\scriptsize{}Iteration Count}} & \multicolumn{4}{c}{\textbf{\scriptsize{}Runtime (seconds)}}\tabularnewline
\hline 
{\small{}k} & {\small{}$m_f$} & {\small{}$L_{f}$} & {\small{}iALM} & {\small{}IPL} & {\small{}RQP} & {\small{}ASL} & {\small{}iALM} & {\small{}IPL} & {\small{}RQP} & {\small{}ASL}\tabularnewline
\hline 
\hline 
{\small{}5} & {\small{}125}& {\small{}125} & {\small{}*} & {\small{}1438} & {\small{}375618} & \textbf{\small{}428} & {\small{}*} & {\small{}14.84} & {\small{}2776.15} & \textbf{\small{}2.32}\tabularnewline

{\small{}10} & {\small{}125} & {\small{}125} & {\small{}*} & {\small{}1559} & {\small{}40342} & \textbf{\small{}546} & {\small{}*} & {\small{}8.33} & {\small{}216.42} & \textbf{\small{}2.87}\tabularnewline

{\small{}20} & {\small{}125} & {\small{}125} & {\small{}*} & {\small{}1400} & {\small{}13440} & \textbf{\small{}442} & {\small{}*} & {\small{}7.02} & {\small{}67.92} & \textbf{\small{}2.39}\tabularnewline

\hline

{\small{}5} & {\small{}200} & {\small{}200} & {\small{}*} & {\small{}6555} & {\small{}21868} & \textbf{\small{}773} & {\small{}*} & {\small{}43.98} & {\small{}146.79} & \textbf{\small{}4.25}\tabularnewline

{\small{}10} & {\small{}200} & {\small{}200} & {\small{}*} & {\small{}7470} & {\small{}10219} & \textbf{\small{}894} & {\small{}*} & {\small{}47.53} & {\small{}64.34} & \textbf{\small{}5.70}\tabularnewline

{\small{}20} & {\small{}200} & {\small{}200} & {\small{}*} & {\small{}20132} & {\small{}60369} & \textbf{\small{}612} & {\small{}*} & {\small{}118.99} & {\small{}312.28} & \textbf{\small{}4.25}\tabularnewline

\hline

{\small{}5} & {\small{}250} & {\small{}250} & {\small{}*} & {\small{}10391} & {\small{}19365} & \textbf{\small{}622} & {\small{}*} & {\small{}44.25} & {\small{}79.76} & \textbf{\small{}3.96}\tabularnewline

{\small{}10} & {\small{}250} & {\small{}250} & {\small{}211991} & {\small{}32566} & {\small{}143192} & \textbf{\small{}827} & {\small{}574.91} & {\small{}175.81} & {\small{}690.92} & \textbf{\small{}5.10}\tabularnewline

{\small{}20} & {\small{}250} & {\small{}250} & {\small{}236490} & {\small{}199353} & {\small{}*} & \textbf{\small{}558} & {\small{}628.55} & {\small{}985.49} & {\small{}*} & \textbf{\small{}3.22}\tabularnewline
\hline

{\small{}5} & {\small{}1000} & {\small{}1000} & {\small{}*} & {\small{}567358} & {\small{}63633} & \textbf{\small{}1818} & {\small{}*} & {\small{}2873.66} & {\small{}312.34} & \textbf{\small{}11.97}\tabularnewline

{\small{}10} & {\small{}1000} & {\small{}1000} & {\small{}*} & {\small{}*} & {\small{}*} & \textbf{\small{}932} & {\small{}*} & {\small{}*} & {\small{}*} & \textbf{\small{}14.59}\tabularnewline

{\small{}20}  & {\small{}1000} & {\small{}1000} & {\small{}*} & {\small{}*} & {\small{}*} & \textbf{\small{}1581} & {\small{}*} & {\small{}*} & {\small{}*} & \textbf{\small{}12.62}\tabularnewline

\end{tabular}
\par\end{centering}
\caption{Iteration counts and runtimes (in seconds) for the Sparse PCA
problem in Subsection~\ref{subsec:SparsePCA}. The tolerances are set to $10^{-5}$. Entries marked with * did not converge in the time limit of $3600$ seconds. The ATR metric is $0.0306$. \label{tab105:spca}}
\end{table}

We now describe the specific parameters that ASL, RQP, and iALM choose for this class of problems. Both ASL and RQP choose the initial penalty parameter, $c_1=1$. ASL allows the prox stepsize to be doubled at the end of an iteration if the number of iterations by its ACG call does not exceed $4$. ASL also takes $M^1_0$ defined in its step 1 to be 1 and the initial prox stepsize to be $1/(2m_f)$. Finally, the auxillary parameters of iALM are chosen as:
\[
B_i=1, \quad  L_i=0, \quad \rho_i=0 \quad \forall i \geq 1,
\]
based off the relaxed, but unverified assumption that its iterates lie in ${\cal F}^{k} \times {\cal F}^{k}$.

The numerical results are presented in Table~\ref{tab105:spca}. Table~\ref{tab105:spca} compares ASL with three of the benchmark algorithms namely, iALM, IPL, and RQP. The tolerances are set as $\hat \rho= \hat \eta=10^{-5}$ and a time limit of $3600$ seconds, or 1 hour, is imposed. Entries marked with * did not converge in the time limit.

\subsection{Bounded Matrix Completion (BMC)}\label{bounded matrix completion}
We consider the bounded matrix completion problem studied in \cite{yao2017efficient}. That is, given a dimension pair $(p,q)\in\n^{2}$, positive scalar triple $(\upsilon,\tau_m,\theta)\in\r_{++}^{3}$,
scalar pair $(u,l)\in\r^{2}$, matrix $Q\in\r^{p\times q}$, and indices
$\Omega$, we consider the following bounded matrix completion (BMC) problem: 
\begin{align*}
\min_{X}\  & \frac{1}{2}\|P_{\Omega}(X-Q)\|^{2}+\tau_m\sum_{i=1}^{\min\{p,q\}}\left[\kappa(\sigma_{i}(X))-\kappa_{0}\sigma_{i}(X)\right]+{\tau_m\kappa_0}\|X\|_{*}\\
\text{s.t.}\  & l\leq X_{ij}\leq u\quad\forall(i,j)\in\{1,...,p\}\times\{1,...,q\},
\end{align*}
where $\|\cdot\|_{*}$ denotes the nuclear norm, the function $P_{\Omega}$
is the linear operator that zeros out any entry not in $\Omega$,
the function $\sigma_{i}(X)$ denotes the $i^{\rm th}$ largest singular
value of $X$, and 
\[
\kappa_{0}:=\frac{\upsilon}{\theta},\quad\kappa(t):=\upsilon\log\left(1+\frac{|t|}{\theta}\right)\quad\forall t\in\r.
\]
\begin{table}[!tbh]
\begin{centering}
\begin{tabular}{>{\centering}p{0.6cm}>{\centering}p{0.75cm}>{\centering}p{0.75cm}>{\centering}p{0.75cm}|>{\centering}p{1.0cm}>{\centering}p{1.0cm}|>{\centering}p{1.4cm}>{\centering}p{1.4cm}}
\multicolumn{4}{c|}{\textbf{\scriptsize{}Parameters}} & \multicolumn{2}{c|}{\textbf{\scriptsize{}Iteration Count}} & \multicolumn{2}{c}{\textbf{\scriptsize{}Runtime (seconds)}}\tabularnewline
\hline 
{\small{}$\theta$} & {\small{}$\tau_m$} & {\small{}$m_{f}$}& {\small{}$L_{f}$} & {\small{}RQP} & {\small{}ASL} & {\small{}RQP} & {\small{}ASL}\tabularnewline
\hline 
\hline 
{\small{}1/2} & {\small{}0.5}& {\small{}2} & {\small{}2} & \textbf{\small{}}130 & \textbf{\small{}79} & {\small{}209.07}& \textbf{\small{}139.52}\tabularnewline

{\small{}1/2} & {\small{}1} & {\small{}4} & {\small{}4} & {\small{}128} & \textbf{\small{}119} & {\small{}207.21} &  \textbf{\small{}189.75}\tabularnewline

{\small{}1/2} & {\small{}2} & {\small{}8} & {\small{}8} & {\small{}1233} & \textbf{\small{}457} &{\small{}2075.16}& \textbf{\small{}931.91}\tabularnewline

\hline

{\small{}1/3} & {\small{}0.5} & {\small{}4.5} & {\small{}4.5} & {\small{}384} & \textbf{\small{}51} & {\small{}1229.60} & \textbf{\small{}61.22}\tabularnewline

{\small{}1/3} & {\small{}1} & {\small{}9} & {\small{}9} & {\small{}513} & \textbf{\small{}76} & {\small{}1360.69} & \textbf{\small{}101.04}\tabularnewline

{\small{}1/3} & {\small{}2} & {\small{}18} & {\small{}18} & {\small{}*} & \textbf{\small{}494} & {\small{}*} & \textbf{\small{}1001.82}\tabularnewline

\hline

{\small{}1/4} & {\small{}0.5} & {\small{}8} & {\small{}8} & {\small{}601} & \textbf{\small{}66} & {\small{}928.01} & \textbf{\small{}85.28}\tabularnewline

{\small{}1/4} & {\small{}1} & {\small{}16} & {\small{}16} & {\small{}680} & \textbf{\small{}90} & {\small{}1077.89} &  \textbf{\small{}147.76}\tabularnewline
\hline 

{\small{}1/5} & {\small{}0.5} & {\small{}12.5} & {\small{}12.5} & {\small{}488} & \textbf{\small{}193} & {\small{}1653.75} & \textbf{\small{}313.51}\tabularnewline

{\small{}1/5} & {\small{}1} & {\small{}25} & {\small{}25} & {\small{}859} & \textbf{\small{}227} & {\small{}1494.94} & \textbf{\small{}475.09}\tabularnewline
\hline 

{\small{}1/6} & {\small{}0.5} & {\small{}18} & {\small{}18} & {\small{}838} & \textbf{\small{}96} & {\small{}1359.45} &  \textbf{\small{}137.72}\tabularnewline

{\small{}1/6}  & {\small{}1} & {\small{}36} & {\small{}36} & {\small{}617} & \textbf{\small{}221} & {\small{}962.52} & \textbf{\small{}358.25}\tabularnewline
\hline 

{\small{}1/7} & {\small{}0.5} & {\small{}24.5} & {\small{}24.5} & {\small{}770} & \textbf{\small{}142} & {\small{}1232.90} & \textbf{\small{}195.66}\tabularnewline

{\small{}1/7}  & {\small{}1} & {\small{}49} & {\small{}49} & {\small{}789} & \textbf{\small{}355} & {\small{}1213.75} & \textbf{\small{}580.04}\tabularnewline

\end{tabular}
\par\end{centering}
\caption{Iteration counts and runtimes (in seconds) for the BMC
problem in Subsection~\ref{bounded matrix completion}. The tolerances are set to $10^{-3}$. Entries marked with * did not converge in the time limit of $7200$ seconds. The ATR metric is $0.3079$. \label{tab103:bmc}}
\end{table}

We first describe the parameters considered for the above problem and some of its properties. First, the matrix $Q$ is the user-movie ratings data matrix of
the MovieLens 100K dataset\footnote{See the MovieLens 100K dataset containing 610 users and 9724 movies which can be found in https://grouplens.org/datasets/movielens/}. Second, $\upsilon$ is chosen to be $0.5$ and $\tau_m$ and $\theta$ are allowed to vary. Third, the curvature parameters are $m_f=2\upsilon\tau_m/\theta^{2}$ and $L_f=\max\left\{1,m_f\right\}$. Fourth, the bounds are set to $(l,u)=(0,5)$ and the initial starting point is chosen as $X_0=0$. Finally, the above optimization problem can be written in the form:
\begin{align*}
\min_{X}\  & f(X)+h(X)\\
\text{s.t.}\  & {\cal A}(X)\in S,
\end{align*}
where 
\begin{gather*}
f(X)=\frac{1}{2}\|P_{\Omega}(X-Q)\|^{2}+\tau_m\sum_{i=1}^{\min\{p,q\}}\left[\kappa(\sigma_{i}(X))-\kappa_{0}\sigma_{i}(X)\right],\quad h(X)={\tau_m\kappa_0}\|X\|_{*},\\
\cal A(X)=X,\quad S=\left\{ Z\in\r^{p\times q}:l\leq Z_{ij}\leq u,\:(i,j)\in\{1,...,p\}\times\{1,...,q\}\right\}.\\
\end{gather*}

To deal with the more generalized constraints $\cal A(X) \in S$, our method, ASL, considers the following augmented Lagrangian function and Lagrange multiplier update:
\begin{align*}
&\mathcal L_c(z,p):=f(z)+h(z)-\frac{\|p\|^2}{2c}+\frac{c}{2}\|(Az+\frac{p}{c})-\Pi_S(Az+\frac{p}{c})\|^2;\\
&p_k:=p_{k-1}+c_{k}(Az_{k}-\Pi_S(Az_k+\frac{p_{k-1}}{c_k})),
\end{align*}
where $\Pi_S$ denotes the projection onto the set S. We only compare ASL to the RQP method since RQP was the only other method developed and coded to deal with these generalized constraints. 

We now describe the specific parameters that ASL and RQP choose for this class of problems. Both ASL and RQP choose the initial penalty parameter, $c_1=500$. ASL allows the prox stepsize to be doubled at the end of an iteration if the number of iterations by its ACG call does not exceed $4$. Finally, ASL also takes $M^1_0$ defined in its step 1 to be 1 and the initial prox stepsize to be $10/(m_f)$.

The numerical results are presented in Table~\ref{tab103:bmc}. Table~\ref{tab103:bmc} compares ASL with RQP. The tolerances are set as $\hat \rho= \hat \eta=10^{-3}$ and a time limit of $7200$ seconds, or 2 hours, is imposed. Entries marked with * did not converge in the time limit. 
\subsection{Comments about the numerical results}
Overall, the two adaptive methods ASL and RQP were the most reliable and consistent, converging in almost every instance. ASL was clearly the most efficient, often converging much faster than RQP particularly when the
required accuracy was high. As demonstrated by the results in Tables \ref{tab104:qpsimplex} and \ref{tab106:qpsimplex}, and the ones in  Tables \ref{tab105:sdpsimplex} and \ref{tab106:qsdpsimplex}, the ATR metric improves (decreases) as the required accuracy increases. Finally, ASL
worked extremely fast on the problem classes of Subsections \ref{subsec:nonconvex_qpbox} and \ref{subsec:SparsePCA}
as demonstrated by the results in Tables \ref{tab105:qpbox} and \ref{tab105:spca},
respectively.
\footnotesize
\begin{appendices}
\section{ADAP-FISTA algorithm} \label{sec:acg}
\subsection{ADAP-FISTA method} 
This subsection presents an adaptive ACG variant, called ADAP-FISTA, which is an important tool in the development of the AS-PAL method. We first introduce the assumptions on the problem  it solves. 
ADAP-FISTA considers the following problem
\begin{equation}\label{OptProb1}
\min\{ \psi(x):= \psi_s(x) + \psi_n(x) : x \in \R^n\}
\end{equation}
where $\psi_s$ and $\psi_n$ are assumed to satisfy the following assumptions:

\begin{itemize}
\item[\textbf{(I)}] $\psi_n:\R^n \to \R\cup\{+\infty\}$ is a possibly nonsmooth convex function;

\item[\textbf{(II)}]
$\psi_s: \R^n\to \R$ is a differentiable function and
there exists $\bar L \geq 0$ such that
\begin{equation}\label{ineq:uppercurvature1}
\|\nabla \psi_s(z') -  \nabla \psi_s(z)\|\le \bar L \|z'-z\| \quad \forall z,z' \in \R^n.
\end{equation}
\end{itemize}

We now describe the type of approximate solution that ADAP-FISTA aims to find. 

\vgap

\noindent{\bf Problem A:} Given $\psi$ satisfying the above assumptions,
a point $x_0 \in \dom \psi_n$, a parameter $\sigma\in(0,\infty)$,
the problem is to find a pair $(y,u) \in \dom \psi_n \times \R^n$ such that
\begin{gather}
    \|u\|\leq \sigma \|y-x_0\|, \quad u \in \nabla \psi_s(y)+\partial \psi_n(y).\label{acg problem}
\end{gather}

We are now ready to present the ADAP-FISTA algorithm below. 

\noindent\begin{minipage}[t]{1\columnwidth}%
\rule[0.5ex]{1\columnwidth}{1pt}

\noindent \textbf{ADAP-FISTA Method}

\noindent \rule[0.5ex]{1\columnwidth}{1pt}%
\end{minipage}

\begin{itemize}
\item[{\bf 0.}] Let initial point $ x_0 \in \dom \psi_n$ and scalars $\mu >0$, $L_0>\mu$, $\chi \in (0,1)$, $\beta>1$, and $\sigma>0$ be given, and set $ y_0=x_0 $, $ A_0=0 $, $\tau_0=1$, and $ j=0$;

\item[{\bf 1.}] Set $L_{j+1}=L_j$;
\item[{\bf 2.}]	Compute
		\begin{equation}\label{def:ak-sfista1}
		a_j=\frac{\tau_{j}+\sqrt{\tau_{j}^2+4\tau_{j} A_{j}(L_{j+1}-\mu)}}{2(L_{j+1}-\mu)}, \quad \tx_{j}=\frac{A_{j}y_{j}+a_j x_j}{A_j+a_j},
		\end{equation}
		\begin{equation}
		y_{j+1}:=\underset{u\in \dom \psi_n}\argmin\left\lbrace q_j (u;\tx_{j},L_{j+1}) 
		:= \ell_{\psi_s}(u;\tilde x_{j}) + \psi_n(u) + \frac{L_{j+1}}{2}\|u-\tx_{j}\|^2\right\rbrace,
		\label{eq:ynext-sfista1}
		\end{equation}
		If  the inequality
		\begin{equation}\label{ineq check}
		\ell_{\psi_s}(y_{j+1};\tilde x_{j})+\frac{(1-\chi) L_{j+1}}{2}\|y_{j+1}-\tilde x_{j}\|^2\geq \psi_s(y_{j+1})
		\end{equation}
		holds go to step~3; else set $L_{j+1} \leftarrow \beta L_{j+1} $ and repeat step~2;
\item[{\bf 3.}] Compute
\begin{align}
A_{j+1}&=A_j+a_j, \quad \tau_{j+1}= \tau_j + a_j\mu,  \label{eq:taunext-sfista1} \\
s_{j+1}&=(L_{j+1}-\mu)(\tilde x_j-y_{j+1}),\label{eq:sk}\\
\quad x_{j+1}&= \frac{1}{\tau_{j+1}} \left[\mu a_j y_{j+1} + \tau_j x_j-a_js_{j+1} \right];\label{eq:xnext-sfista1}
\end{align}
	
\item[{\bf 4.}]  If the inequality
\begin{align}
&\|y_{j+1}-x_{0}\|^{2} \geq \chi A_{j+1}L_{j+1} \|y_{j+1}-\tilde x_{j}\|^2, \label{ineq: ineq 1}
\end{align}
holds, then  go to step 5; otherwise, stop with {\bf failure};

\item[{\bf 5.}] Compute 
\begin{equation}\label{def:uk}
u_{j+1}=\nabla \psi_s(y_{j+1})-\nabla \psi_s(\tilde x_{j})+L_{j+1}(\tilde x_j-y_{j+1}).   
\end{equation} If the inequality
		\begin{equation}\label{u sigma criteria}
		\|u_{j+1}\| \leq \sigma \|y_{j+1}-x_0\|
		\end{equation}
		holds then stop with {\bf success} and output $(y,u):=(y_{j+1},u_{j+1})$; otherwise,
		 $ j \leftarrow j+1 $ and go to step~1.
\end{itemize}
\noindent \rule[0.5ex]{1\columnwidth}{1pt}




We now make some remarks about ADAP-FISTA. First, usual FISTA methods for solving the strongly convex version of \eqref{OptProb1} consist of repeatedly invoking only steps 2 and 3 of ADAP-FISTA either
with a static Lipschitz constant,
namely, $L_{j+1}=L$ for all $j \ge0$ for some $L\geq \bar L$, or by adaptively searching for a suitable Lipschitz $L_{j+1}$ (as in step 2 of ADAP-FISTA) satisfying
a condition similar to \eqref{ineq check}.
Second, the pair $(y_{j+1},u_{j+1})$ always satisfies the inclusion in \eqref{acg problem} (see Lemma~\ref{lem:gamma-sfista0} below) so if ADAP-FISTA stops successfully in step 5, or equivalently \eqref{u sigma criteria} holds, the pair solves Problem A above. Finally, if condition \eqref{ineq: ineq 1} in step 4 is never violated, ADAP-FISTA must stop successfully in step 5 (see Proposition~\ref{prop:nest_complex1} below).

We now discuss how ADAP-FISTA compares with existing ACG variants for solving \eqref{OptProb1} under the assumption that $\psi_s$ is $\mu$-strongly convex.
Under this assumption,
FISTA variants have
been studied, for example, in \cite{CurvatureFree, florea2018accelerated, nesterov1983method, fistaReport2021, MontSvaiter_fista}, while
other ACG variants have been studied, for example, in \cite{nesterov2012gradient, YHe1, YHe2}.
A crucial difference
between ADAP-FISTA and these
variants is that:
i) ADAP-FISTA stops based on a different relative criterion, namely, \eqref{u sigma criteria} (see Problem A above) and
attempts to approximately
solve
\eqref{OptProb1}
in this sense
even when
$\psi_s$ is not $\mu$-strongly convex, and
ii)
ADAP-FISTA provides a key
and easy to check inequality whose validity at every iteration guarantees its successful termination.
On the other hand, ADAP-FISTA
shares similar features with these other methods in that:
i) it has a reasonable iteration complexity guarantee
regardless of whether
it succeeds or fails,
and
ii) it successfully
terminates when
$\psi_s$ is $\mu$-strongly convex (see Propositions~\ref{prop:nest_complex1}-\ref{strongly convex fista} below). Moreover, like the method in \cite{florea2018accelerated}, ADAP-FISTA
adaptively searches for a suitable Lipschitz estimate
$L_{j+1}$ that is used in \eqref{eq:ynext-sfista1}.

We now present the main convergence results of ADAP-FISTA, which is invoked by AS-PAL for solving the sequence of subproblems \eqref{eq:approx_primal_update}. The first result, namely
Proposition~\ref{prop:nest_complex1} below, gives an
iteration complexity bound regardless if ADAP-FISTA terminates with success or failure and shows that if ADAP-FISTA successfully stops, then it obtains a stationary solution of \eqref{OptProb1} with respect to a relative error criterion. The second result, namely Proposition~\ref{strongly convex fista} below, shows that ADAP-FISTA always stops successfully
whenever $\psi_s$ is $\mu$-strongly convex.

\begin{proposition}\label{prop:nest_complex1} 
The following statements about ADAP-FISTA hold:
\begin{itemize}
\item[(a)] if
$L_0 = {\cal O}(\bar L)$, it
always stops (with either success or failure) in at most 
\[\cal O_1\left(\sqrt{\frac{\bar L}{\mu}}\log^+_0 (\bar L) \right)\] 
iterations/resolvent evaluations;
\item[(b)] if it stops successfully, it terminates with a pair
$(y,u) \in \dom \psi_{n} \times \Re^{n}$ satisfying 
\begin{align}
&u \in \nabla \psi_s(y)+\partial \psi_n(y) \label{u inclusion 1};\\
&\|u\|\leq \sigma\|y-x_0\|. \label{u termination 1}
\end{align}
\end{itemize}
\end{proposition}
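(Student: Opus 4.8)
For part (b), I would dispatch it first, as it is immediate from the structure of the algorithm. The point $y_{j+1}$ minimizes the strongly convex objective in \eqref{eq:ynext-sfista1}, so its first-order optimality condition reads $0 \in \nabla\psi_s(\tilde{x}_j) + \partial\psi_n(y_{j+1}) + L_{j+1}(y_{j+1}-\tilde{x}_j)$; rearranging and adding $\nabla\psi_s(y_{j+1})$ to both sides produces $\nabla\psi_s(y_{j+1}) - \nabla\psi_s(\tilde{x}_j) + L_{j+1}(\tilde{x}_j - y_{j+1}) \in \nabla\psi_s(y_{j+1}) + \partial\psi_n(y_{j+1})$, whose left-hand side is exactly $u_{j+1}$ by \eqref{def:uk}. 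This yields \eqref{u inclusion 1} (the inclusion in \eqref{acg problem}), valid at every iteration regardless of termination. The bound \eqref{u termination 1} is then immediate, since ADAP-FISTA stops successfully in step 5 precisely when the test \eqref{u sigma criteria} holds.

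For part (a), I would first control the inner line search. By assumption (II) and the descent inequality, $\psi_s(y_{j+1}) \leq \ell_{\psi_s}(y_{j+1};\tilde{x}_j) + (\bar L/2)\|y_{j+1}-\tilde{x}_j\|^2$, so the acceptance test \eqref{ineq check} is guaranteed once $(1-\chi)L_{j+1} \geq \bar L$. Since $L_{j+1}$ is only inflated by the factor $\beta$ inside step 2 and is carried over between outer iterations in step 1, an easy induction shows it never exceeds $L_{\max} := \max\{L_0, \beta\bar L/(1-\chi)\}$, which is $\mathcal{O}(\bar L)$ under the hypothesis $L_0 = \mathcal{O}(\bar L)$; moreover the total number of extra resolvent evaluations spent across all line searches is at most the number of outer iterations plus the single logarithmic term $\log_\beta(L_{\max}/L_0)$.

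The heart of the matter is bounding the number of outer iterations. From the formula for $a_j$ in \eqref{def:ak-sfista1} I would extract the two identities $(L_{j+1}-\mu)a_j^2 = \tau_j A_{j+1}$ and $\tau_j = 1 + \mu A_j$ (the latter by telescoping $\tau_{j+1}=\tau_j+a_j\mu$ with $A_0=0$, $\tau_0=1$). Combining these with $\tau_j \geq \mu A_j$ and $L_{j+1}\leq L_{\max}$ gives $a_j \geq A_j\sqrt{\mu/L_{\max}}$, hence the geometric growth $A_{j+1}\geq (1+\sqrt{\mu/L_{\max}})A_j$ and thus $A_j \geq A_1(1+\sqrt{\mu/L_{\max}})^{j-1}$ with $A_1 = 1/(L_1-\mu)$. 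On the other hand, $\|u_{j+1}\| \leq (\bar L+L_{j+1})\|y_{j+1}-\tilde{x}_j\|$ by the Lipschitz bound, while passing the step-4 test \eqref{ineq: ineq 1} forces $\|y_{j+1}-\tilde{x}_j\|\leq \|y_{j+1}-x_0\|/\sqrt{\chi A_{j+1}L_{j+1}}$; chaining these shows that whenever the run reaches step 5 with $A_{j+1} \geq \Theta := (\bar L+L_{j+1})^2/(\chi\sigma^2 L_{j+1})$, the success test \eqref{u sigma criteria} automatically holds.

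To finish, I would invert the geometric growth. The threshold $\Theta$ is $\mathcal{O}(\bar L^2)$ (using $L_0 < L_{j+1}\leq L_{\max}$), and since $\log(1+\sqrt{\mu/L_{\max}}) \geq \tfrac12\sqrt{\mu/L_{\max}}$, the growth bound shows $A_{j+1}\geq\Theta$ is reached within $\mathcal{O}(\sqrt{L_{\max}/\mu}\,\log(\Theta/A_1)) = \mathcal{O}_1(\sqrt{\bar L/\mu}\,\log_0^+\bar L)$ outer iterations, the log factor staying clean because $\Theta/A_1 = \mathcal{O}(\bar L^3)$. At the first iteration where this threshold is met, the algorithm has either already halted with failure in step 4 at an earlier iteration or reaches step 5 and halts with success; in every case it terminates within that many outer iterations, and adding the line-search overhead yields the claimed $\mathcal{O}_1(\sqrt{\bar L/\mu}\,\log_0^+\bar L)$ total. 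I expect the main obstacle to be precisely this growth-and-inversion step: deriving the two $a_j$ identities, wringing the $\sqrt{\mu/L_{\max}}$ ratio out of them, and bounding $\log(\Theta/A_1)$ so that only a single $\log_0^+\bar L$ factor survives, while verifying that an early failure in step 4 can never prolong the run past the success threshold.
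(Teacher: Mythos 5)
Your proposal is correct and follows essentially the same route as the paper: part (b) via the optimality condition of \eqref{eq:ynext-sfista1} plus the stopping test \eqref{u sigma criteria}, and part (a) by bounding $L_j$ by $\max\{L_0,\beta\bar L/(1-\chi)\}$, extracting the identities \eqref{tauproperty} to get geometric growth of $A_j$ at rate $1+\Theta(\sqrt{\mu/\bar L})$, showing that once $A_{j+1}L_{j+1}$ clears an $\mathcal{O}(\bar L^2/(\chi\sigma^2))$ threshold the step-4 inequality forces \eqref{u sigma criteria} to hold, and charging the line-search overhead to a single $\log_\beta$ term. The only cosmetic difference is that the paper runs the growth argument on $\sqrt{A_j}$ (Lemma~\ref{lm:Ak-est-fista}) and states the threshold on the product $A_jL_j$ directly, whereas you phrase it on $A_{j+1}$ alone; the two are interchangeable here.
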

\begin{proposition}\label{strongly convex fista}
If $\psi_s$ is $\mu$-convex, then ADAP-FISTA always terminates with success and
its output
$(y,u)$, in addition
to satisfying
\eqref{u inclusion 1} and \eqref{u termination 1},
also  satisfies
the inclusion $u \in \partial (\psi_s+\psi_n)(y)$.
\end{proposition}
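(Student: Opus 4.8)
The plan is to separate the two assertions: successful termination, and the refined inclusion $u\in\partial(\psi_s+\psi_n)(y)$. The inclusion is the easy part. If ADAP-FISTA stops successfully, then Proposition~\ref{prop:nest_complex1}(b) already delivers an output pair $(y,u)$ satisfying \eqref{u inclusion 1} and \eqref{u termination 1}, i.e.\ $u\in\nabla\psi_s(y)+\partial\psi_n(y)$. Since $\mu$-convexity of $\psi_s$ in particular makes $\psi_s$ convex, and assumption (II) makes it differentiable, we have $\partial\psi_s(y)=\{\nabla\psi_s(y)\}$, whence $u\in\partial\psi_s(y)+\partial\psi_n(y)\subseteq\partial(\psi_s+\psi_n)(y)$ by the (always valid) direction of the subdifferential sum rule. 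Thus the inclusion follows the moment success is established, and essentially all the work lies in proving that ADAP-FISTA terminates successfully.

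For successful termination I would invoke the observation, recorded in the third remark after the algorithm, that if the test \eqref{ineq: ineq 1} in step~4 is never violated then ADAP-FISTA must stop successfully in step~5 (this is part of Proposition~\ref{prop:nest_complex1}, and reflects that $A_{j+1}\to\infty$ eventually forces the relative criterion \eqref{u sigma criteria} to hold). Hence it suffices to show that, when $\psi_s$ is $\mu$-strongly convex, \eqref{ineq: ineq 1} holds at \emph{every} iteration $j$ the method executes, so that step~4 never reports failure.

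The core is a per-iteration descent inequality assembled from three ingredients: (i) optimality of $y_{j+1}$ for the $L_{j+1}$-strongly convex model $q_j(\cdot;\tilde x_j,L_{j+1})$ in \eqref{eq:ynext-sfista1}, giving $q_j(u)\ge q_j(y_{j+1})+\tfrac{L_{j+1}}{2}\|u-y_{j+1}\|^2$ for all $u$; (ii) the line-search inequality \eqref{ineq check}, which rearranges to $\psi(y_{j+1})\le q_j(y_{j+1})-\tfrac{\chi L_{j+1}}{2}\|y_{j+1}-\tilde x_j\|^2$; and (iii) the $\mu$-strong convexity of $\psi_s$, which yields the upper model bound $q_j(u)\le\psi(u)+\tfrac{L_{j+1}-\mu}{2}\|u-\tilde x_j\|^2$. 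Chaining these gives, for every $u$,
\[
\psi(u)-\psi(y_{j+1})\ \ge\ \frac{\chi L_{j+1}}{2}\|y_{j+1}-\tilde x_j\|^2+\frac{L_{j+1}}{2}\|u-y_{j+1}\|^2-\frac{L_{j+1}-\mu}{2}\|u-\tilde x_j\|^2.
\]
I would then insert the test point $u=(A_j y_j+a_j x)/A_{j+1}$ for arbitrary $x$, use convexity of $\psi$ and the convex-combination identity for $\tilde x_j$ in \eqref{def:ak-sfista1}, and exploit the algebraic relations $(L_{j+1}-\mu)a_j^2=\tau_j A_{j+1}$ and $\tau_{j+1}=\tau_j+\mu a_j$ (built into \eqref{def:ak-sfista1}--\eqref{eq:taunext-sfista1}) together with the momentum update \eqref{eq:xnext-sfista1}. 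This collapses into a telescoping potential of the form
\[
A_{j+1}\psi(y_{j+1})+\frac{\tau_{j+1}}{2}\|x_{j+1}-x\|^2+\frac{\chi}{2}\sum_{i\le j}L_{i+1}A_{i+1}\|y_{i+1}-\tilde x_i\|^2\ \le\ A_{j+1}\psi(x)+\frac{1}{2}\|x_0-x\|^2.
\]

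Finally, I would use the $\mu$-strong convexity of $\psi=\psi_s+\psi_n$ itself (it has a unique minimizer $x^*$, with $\psi(y_{j+1})-\psi(x^*)\ge\tfrac{\mu}{2}\|y_{j+1}-x^*\|^2$) to turn the potential bound into the required lower bound on $\|y_{j+1}-x_0\|^2$, thereby verifying \eqref{ineq: ineq 1}. \textbf{The main obstacle} is precisely this last conversion: the telescoped potential most naturally bounds $A_{j+1}L_{j+1}\|y_{j+1}-\tilde x_j\|^2$ in terms of $\|x_0-x^*\|^2$, the distance to the \emph{minimizer}, whereas step~4 compares against $\|y_{j+1}-x_0\|^2$, the distance to the \emph{prox center}. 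Reconciling the two requires retaining (rather than discarding) the $\tfrac{\tau_{j+1}}{2}\|x_{j+1}-x\|^2$ and strong-convexity terms and carefully controlling the cross terms relating $x_{j+1}$, $y_{j+1}$, $x^*$, and $x_0$; getting the direction of this inequality right is the crux of the proof.
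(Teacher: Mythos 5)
Your overall architecture is the right one and matches the paper's: reduce everything to showing that the step-4 test \eqref{ineq: ineq 1} holds at every iteration (so the method never fails and must eventually stop successfully via \eqref{u sigma criteria}), and obtain the refined inclusion $u\in\partial(\psi_s+\psi_n)(y)$ from \eqref{u inclusion 1} by the sum rule for convex subdifferentials. Your telescoped potential inequality is also exactly the one the paper establishes (its Lemma~\ref{lm:hysub-5-ac}, inequality \eqref{key convergence inequality}), valid for every $x\in\dom\psi_n$:
\begin{equation*}
A_j\bigl[\psi(y_j)-\psi(x)\bigr]+\frac{\tau_j}{2}\|x-x_j\|^2\ \le\ \frac{1}{2}\|x-x_0\|^2-\frac{\chi}{2}\sum_{i=0}^{j-1}A_{i+1}L_{i+1}\|y_{i+1}-\tilde x_i\|^2.
\end{equation*}

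However, there is a genuine gap at precisely the point you flag as ``the main obstacle,'' and your proposed way around it (introducing the minimizer $x^*$ of $\psi$ and using strong convexity of $\psi$ to relate $\|x_0-x^*\|$ to $\|y_{j+1}-x_0\|$) points in an unpromising direction: with $x=x^*$ the potential gives an \emph{upper} bound on the accumulated $\sum A_{i+1}L_{i+1}\|y_{i+1}-\tilde x_i\|^2$ in terms of $\|x^*-x_0\|^2$, whereas \eqref{ineq: ineq 1} demands a \emph{lower} bound on $\|y_{j+1}-x_0\|^2$ by that same quantity, and there is no general inequality forcing $\|y_{j+1}-x_0\|\ge\|x^*-x_0\|$. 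The resolution is much simpler than you anticipate and requires no minimizer and no cross-term bookkeeping: substitute $x=y_j$ (the current iterate, which lies in $\dom\psi_n$ by \eqref{eq:ynext-sfista1}) into the potential inequality. Then $A_j[\psi(y_j)-\psi(y_j)]=0$ and the nonnegative term $\tfrac{\tau_j}{2}\|y_j-x_j\|^2$ can be discarded, yielding directly
\begin{equation*}
\|y_j-x_0\|^2\ \ge\ \chi\sum_{i=1}^{j}A_iL_i\|y_i-\tilde x_{i-1}\|^2\ \ge\ \chi A_jL_j\|y_j-\tilde x_{j-1}\|^2,
\end{equation*}
which is exactly \eqref{ineq: ineq 1}. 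This is the paper's argument; with that one substitution your proof closes.
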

The rest of this section is broken up into two subsections which are dedicated to proving Proposition~\ref{prop:nest_complex1} and Proposition~\ref{strongly convex fista}, respectively.
\subsection{Proof of Proposition~\ref{prop:nest_complex1}}  
This subsection is dedicated to proving Proposition~\ref{prop:nest_complex1}. The first lemma below presents key definitions and inequalities used in the convergence analysis of ADAP-FISTA.

\begin{lemma}\label{lem:gamma-sfista0}
Define
\begin{equation}\label{omega zeta}
\omega=\beta/(1-\chi), \quad \zeta:=\bar L+\max\{L_0,\omega \bar L\}.
\end{equation}
Then, the following statements hold:
\begin{itemize}
\item[(a)] $\{L_j\}$ is nondecreasing;
\item[(b)] for every $j \ge 0$, we have
\begin{align}
&\tau_j = 1+A_j\mu, \quad \frac{\tau_j  A_{j+1}}{ a_j^2}=L_{j+1}-\mu;\label{tauproperty}\\[0.5em]
&L_0\leq L_j\leq \max\{L_0,\omega \bar L\}\label{upper bound};\\[0.8em]
& u_{j+1} \in \nabla \psi_s(y_{j+1}) + \partial \psi_n(y_{j+1}), \quad \|u_{j+1}\|\leq \zeta \|y_{j+1}-\tilde x_{j}\|\label{ubound-1}.
\end{align}
\end{itemize}
\end{lemma}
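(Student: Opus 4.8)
The plan is to establish the four assertions in the order listed, since each relies on the previous ones. Statement (a) is immediate from the structure of the algorithm: step~1 initializes $L_{j+1}=L_j$ and step~2 only ever replaces $L_{j+1}$ by $\beta L_{j+1}$ with $\beta>1$, so upon exiting step~2 the accepted value satisfies $L_{j+1}\ge L_j$; hence $\{L_j\}$ is nondecreasing. The first identity in \eqref{tauproperty}, namely $\tau_j=1+A_j\mu$, I would prove by induction on $j$: the base case holds since $\tau_0=1$ and $A_0=0$, and the inductive step follows directly from the updates $\tau_{j+1}=\tau_j+a_j\mu$ and $A_{j+1}=A_j+a_j$ in \eqref{eq:taunext-sfista1}. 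For the second identity in \eqref{tauproperty}, I would observe that the defining formula \eqref{def:ak-sfista1} exhibits $a_j$ as the positive root of the quadratic $(L_{j+1}-\mu)x^2-\tau_j x-\tau_j A_j=0$; rearranging this equation gives $(L_{j+1}-\mu)a_j^2=\tau_j(A_j+a_j)=\tau_j A_{j+1}$, which is exactly the claimed relation.

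The main obstacle is the upper bound $L_j\le\max\{L_0,\omega\bar L\}$ in \eqref{upper bound}, which controls the line search in step~2. The key observation is that, by the descent lemma applied to $\psi_s$ (whose gradient is $\bar L$-Lipschitz by \eqref{ineq:uppercurvature1}), one has $\psi_s(y_{j+1})\le\ell_{\psi_s}(y_{j+1};\tilde x_j)+\tfrac{\bar L}{2}\|y_{j+1}-\tilde x_j\|^2$, so the acceptance test \eqref{ineq check} is automatically satisfied whenever $(1-\chi)L_{j+1}\ge\bar L$, i.e.\ whenever the trial value is at least $\bar L/(1-\chi)$. I would then argue that within the inner loop of step~2, either the initial trial value $L_j$ already passes the test, in which case $L_{j+1}=L_j$, or the final accepted value equals $\beta$ times a rejected value; a rejected value must be strictly below $\bar L/(1-\chi)$ by the previous observation, whence $L_{j+1}<\beta\bar L/(1-\chi)=\omega\bar L$. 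In either case $L_{j+1}\le\max\{L_j,\omega\bar L\}$, and a short induction using $L_0\le\max\{L_0,\omega\bar L\}$ then yields the global bound. The lower bound $L_0\le L_j$ is already contained in (a).

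Finally, for the statement \eqref{ubound-1} about $u_{j+1}$, I would read off the first-order optimality condition for the minimization \eqref{eq:ynext-sfista1}: since $\ell_{\psi_s}(\cdot;\tilde x_j)$ is affine with gradient $\nabla\psi_s(\tilde x_j)$, optimality of $y_{j+1}$ gives $L_{j+1}(\tilde x_j-y_{j+1})-\nabla\psi_s(\tilde x_j)\in\partial\psi_n(y_{j+1})$. Comparing this with the definition of $u_{j+1}$ in \eqref{def:uk} shows that $u_{j+1}-\nabla\psi_s(y_{j+1})$ equals the left-hand side of that inclusion, which proves $u_{j+1}\in\nabla\psi_s(y_{j+1})+\partial\psi_n(y_{j+1})$. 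For the norm estimate, I would apply the triangle inequality to \eqref{def:uk}, bound $\|\nabla\psi_s(y_{j+1})-\nabla\psi_s(\tilde x_j)\|$ by $\bar L\|y_{j+1}-\tilde x_j\|$ via \eqref{ineq:uppercurvature1}, and combine with the bound $L_{j+1}\le\max\{L_0,\omega\bar L\}$ from the previous paragraph, obtaining $\|u_{j+1}\|\le(\bar L+\max\{L_0,\omega\bar L\})\|y_{j+1}-\tilde x_j\|=\zeta\|y_{j+1}-\tilde x_j\|$ with $\zeta$ as in \eqref{omega zeta}.
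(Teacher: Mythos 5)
Your proposal is correct and follows essentially the same route as the paper's proof: monotonicity from the update rule, the quadratic-root identity for $a_j$ combined with \eqref{eq:taunext-sfista1}, the descent-lemma argument showing a rejected trial value must lie below $\bar L/(1-\chi)$ (hence the accepted one below $\omega\bar L$), and the optimality condition of \eqref{eq:ynext-sfista1} plus the triangle inequality for \eqref{ubound-1}. The only differences are cosmetic (you spell out the induction for $\tau_j=1+A_j\mu$ and the quadratic formula explicitly, which the paper leaves implicit).
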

\begin{proof} (a) It is clear from the update rule in the beginning of Step 1 that $\{L_j\}$ is nondecreasing.

(b) The first equality in \eqref{tauproperty} follows directly from both of the relations in \eqref{eq:taunext-sfista1}. The second equality in \eqref{tauproperty} follows immediately from the definition of $a_j$ in \eqref{def:ak-sfista1} and the first relation in  \eqref{eq:taunext-sfista1}. 

We prove \eqref{upper bound} by induction. It clearly holds for $j=0$. Suppose now $\eqref{upper bound}$ holds for
$j \ge 0$ and let us show that it holds for $j+1$. Note
that if $L_{j+1}=L_j$, then relation
\eqref{upper bound} immediately holds.
Assume then that
$L_{j+1}>L_j$. It then follows from
the way $L_{j+1}$ is chosen in step 1 that \eqref{ineq check} is not satisfied with $L_{j+1}/\beta$. This fact together with the inequality \eqref{ineq:uppercurvature1} at the points $(y_{j+1},\tilde x_j)$ imply that 
\begin{equation}\label{Lipschitz ineq}
    \ell_{\psi_s}(y_{j+1};\tilde x_j)+\frac{(1-\chi) L_{j+1}}{2\beta}\|y_{j+1}-\tilde x_j\|^2 < \psi_s(y_{j+1}) \overset{\eqref{ineq:uppercurvature1}}{\le}
    \ell_{\psi_s}(y_{j+1};\tilde x_j)+\frac{\bar L}{2}\|y_{j+1}-\tilde x_j\|^2.
\end{equation}
The relation in \eqref{upper bound} then immediately follows from the definition of $\omega$ in \eqref{omega zeta}.

Now, by the definition of $u_{j+1}$ in \eqref{def:uk}, triangle inequality, \eqref{ineq:uppercurvature1}, the bound \eqref{upper bound} on $L_{j+1}$, and the definition of $\zeta$ we have
\[\frac{\|u_{j+1}\|}{\|y_{j+1}- \tx_{j}\| } \overset{\eqref{def:uk}}{\le} 
\frac{ \| \nabla \psi_s(y_{j+1}) - \nabla \psi_s(\tx_{j}) \| }{\|y_{j+1}- \tx_{j}\| }+ L_{j+1} \overset{\eqref{ineq:uppercurvature1}}{\le} \bar L+L_{j+1}
\overset{\eqref{upper bound}}{\le} \zeta\]
which immediately implies the inequality in \eqref{ubound-1}.
It follows from \eqref{eq:ynext-sfista1} and its associated optimality condition that
$0 \in \nabla \psi_s(\tx_{j}) + \partial \psi_n(y_{j+1})-L_{j+1}(\tilde x_j-y_{j+1})$, which in view of the definition of $u_{j+1}$ in \eqref{def:uk} implies the inclusion in \eqref{ubound-1}. 
\end{proof}

The result below gives some estimates on the sequence $\{A_j\}$, which will be important for the convergence analysis of the method.
\begin{lemma}\label{lm:Ak-est-fista}
Define 
\begin{equation}\label{Q def}
Q:= 2 \sqrt{ \frac{\max\{L_0,\omega {\bar L}\}}{\mu}}
\end{equation}
where $\omega$ is as in \eqref{omega zeta}.
Then, for every $j \ge 1$, we have
\beq
A_jL_j\ge\label{eq:Akest-sfista1}
\max \left\{ 
\frac{ j^2}{4} ,  \left(1+Q^{-1}\right)^{2(j-1)}
 \right\}.
\eeq
\end{lemma}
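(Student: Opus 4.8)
The plan is to establish the two lower bounds comprising the maximum in \eqref{eq:Akest-sfista1} separately, both driven by the identities in \eqref{tauproperty}. The common starting point is the recursion obtained by rewriting the second identity in \eqref{tauproperty} as $a_j^2 = \tau_j A_{j+1}/(L_{j+1}-\mu)$, together with $a_j = A_{j+1}-A_j$ (from \eqref{eq:taunext-sfista1}) and $\tau_j = 1 + A_j\mu \ge 1$ (the first identity in \eqref{tauproperty}). Throughout I would abbreviate $L^\ast := \max\{L_0,\omega\bar L\}$, so that $L_0 \le L_j \le L^\ast$ by Lemma~\ref{lem:gamma-sfista0}(a) and \eqref{upper bound}, and $Q = 2\sqrt{L^\ast/\mu}$ by \eqref{Q def}.

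For the bound $A_jL_j \ge j^2/4$, I would run the classical telescoping argument. Dropping $\tau_j \ge 1$ and using $L_{j+1}-\mu \le L_{j+1}$ gives $a_j^2 \ge A_{j+1}/L_{j+1}$, hence $A_{j+1}-A_j \ge \sqrt{A_{j+1}/L_{j+1}}$. Factoring the difference of square roots then yields $\sqrt{A_{j+1}}-\sqrt{A_j} \ge 1/(2\sqrt{L_{j+1}})$. Summing from $i=0$ to $j-1$, using $A_0=0$ and the monotonicity of $\{L_j\}$ (so $L_{i+1}\le L_j$ for $i \le j-1$), produces $\sqrt{A_j} \ge j/(2\sqrt{L_j})$, which squares to the claim.

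For the geometric bound $A_jL_j \ge (1+Q^{-1})^{2(j-1)}$, I would pass to the ratio $r := A_{j+1}/A_j$, which is well-defined and strictly larger than $1$ for $j \ge 1$ since $A_1 = a_0 = 1/(L_1-\mu) > 0$. Substituting $A_{j+1}=rA_j$ into the reformulation $(A_{j+1}-A_j)^2(L_{j+1}-\mu) = \tau_j A_{j+1} = (1+A_j\mu)A_{j+1}$ and discarding the harmless constant (i.e. using $1+A_j\mu \ge A_j\mu$) leaves, after dividing by $A_j>0$, the scalar inequality $(r-1)^2 \ge \mu r/(L_{j+1}-\mu) \ge 4r/Q^2$. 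Solving the quadratic $Q^2(r-1)^2 \ge 4r$ for $\sqrt{r}\ge 1$ gives $\sqrt{r} \ge (1+\sqrt{1+Q^2})/Q \ge (1+Q)/Q = 1+Q^{-1}$, hence $A_{j+1} \ge (1+Q^{-1})^2 A_j$. Iterating from $j=1$ and invoking monotonicity of $\{L_j\}$ (so $L_j \ge L_1$) yields $A_jL_j \ge A_1L_1(1+Q^{-1})^{2(j-1)}$; since $A_1L_1 = L_1/(L_1-\mu) > 1$, this is at least $(1+Q^{-1})^{2(j-1)}$.

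The main obstacle is the second (geometric) bound: one must correctly extract the multiplicative growth factor from the coupled recursion for $(A_j,\tau_j)$. This hinges on the clean reformulation via \eqref{tauproperty}, on the step that replaces $L_{j+1}-\mu$ by $L^\ast$ to surface the constant $Q$, and on the quadratic-solving step together with the elementary estimate $\sqrt{1+Q^2}\ge Q$ that converts the exact root into the advertised factor $1+Q^{-1}$. Everything else, including the base case $j=1$ and the combination of the two estimates into the maximum in \eqref{eq:Akest-sfista1}, is routine once these inequalities are in place.
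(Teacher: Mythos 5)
Your proof is correct and follows essentially the same strategy as the paper's: both bounds come from the defining quadratic relation for $a_j$ (equivalently the second identity in \eqref{tauproperty}), with the additive bound obtained by telescoping $\sqrt{A_{i+1}}-\sqrt{A_i}\gtrsim 1/(2\sqrt{L_{i+1}})$ using $\tau_i\ge 1$, and the geometric bound by telescoping the ratio $\sqrt{A_{i+1}/A_i}\ge 1+Q^{-1}$ using $\tau_i\ge \mu A_i$. The only difference is cosmetic: the paper lower-bounds $a_j$ directly from its closed-form expression \eqref{def:ak-sfista1} and completes a square, whereas you solve the quadratic in $\sqrt{r}$; your handling of the base case via $A_1L_1=L_1/(L_1-\mu)>1$ is a valid substitute for the paper's use of $A_1=\xi_1\ge\xi_j$.
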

\begin{proof}
Let integer $j \ge 1$ be given. Define $\xi_{j}=1/(L_{j}-\mu)$.
Using the first equality in \eqref{eq:taunext-sfista1} and the definition of $a_j$ in \eqref{def:ak-sfista1},
we have that for every $i \le j$,
\[
A_{i} \overset{\eqref{eq:taunext-sfista1}}{=} A_{i-1}+ a_{i-1} \overset{\eqref{def:ak-sfista1}}{\ge} A_{i-1} +  \left( \frac{\tau_{i-1} \xi_{i}}2 + \sqrt{\tau_{i-1} \xi_{i} A_{i-1}} \right) \ge 
\left(  \sqrt{A_{i-1}} + \frac12 \sqrt{\tau_{i-1} \xi_{i}} \right)^2.
\]
Passing the above inequality
to its square root and
using Lemma~\ref{lem:gamma-sfista0}(a) and the
fact that 
\eqref{tauproperty} implies that $\tau_{i-1} \ge \max\{1,\mu A_{i-1}\}$, we then conclude that for every $i \le j$,

\begin{align}
\sqrt{A_{i}} - \sqrt{A_{i-1}} &\ge \frac12 \sqrt{ \xi_{i}} \ge \frac12 \sqrt{ \xi_{j} }\label{first inequality}\\
\sqrt{\frac{A_{i}}{A_{i-1}}} &\ge   1 + \frac12 \sqrt{\mu \xi_{i}}
 \ge  1 + \frac12 \sqrt{\mu \xi_{j}}\ge 1 + Q^{-1} \label{second inequality}
\end{align}
where the last inequality in \eqref{second inequality} follows from the definition of $\xi_j$, the relation in \eqref{upper bound}, and the definition of $Q$ in \eqref{Q def}.
Adding the inequality in \eqref{first inequality} from $i=1$ to $i=j$ and
using the fact that $A_0=0$, we conclude that $
\sqrt{A_j} \ge j \sqrt{\xi_j} /2$
and hence that the
first bound in \eqref{eq:Akest-sfista1} holds in view of the fact that $\xi_j \geq 1/L_j$.
Now, multiplying the inequality in \eqref{second inequality} from $i=2$ to $i=j$ and
using Lemma~\ref{lem:gamma-sfista0}(a) and
the fact that $A_1= \xi_1$, we conclude that $
\sqrt{A_j} \ge \sqrt{\xi_1} (1+Q^{-1})^{j-1} \ge \sqrt{\xi_j} (1+Q^{-1})^{j-1}$,
and hence that the
second bound in \eqref{eq:Akest-sfista1} holds in view of the fact that $\xi_j \geq 1/L_j$.
\end{proof}
\begin{proposition}\label{exact ACG complexity}
Let $\zeta$ and $Q$ be as in \eqref{omega zeta} and \eqref{Q def}, respectively. ADAP-FISTA
always stops (with either success or failure)
and does so by performing
at most 
\begin{equation}\label{eq:eq1}
\left\lceil\left(1+Q\right)\log^+_0\left(\frac{\zeta^2}{\chi \sigma^{2}}\right)+1\right\rceil+\left\lceil \frac{\log^+_0(\bar L/((1-\chi)L_0))}{\log \beta}\right\rceil
\end{equation}
iterations/resolvent evaluations.
\end{proposition}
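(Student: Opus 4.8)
The plan is to bound separately the two contributions to the total count of resolvent evaluations, namely (i) the number of outer iterations, i.e.\ the number of distinct indices $j$ for which step~2 is entered afresh from step~1, and (ii) the total number of times the inner line-search loop of step~2 multiplies $L_{j+1}$ by $\beta$. Since each outer iteration performs exactly one resolvent evaluation plus one extra evaluation per line-search repetition, the grand total equals the sum of these two quantities, and I will show that they are bounded by the first and second ceilings in \eqref{eq:eq1}, respectively.

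For the outer-iteration count, I would first argue that the method necessarily stops (with success or failure) once the product $A_{j+1}L_{j+1}$ is large enough. Suppose the method reaches step~4 at an iteration $j$ without having stopped; if it does not fail there, then \eqref{ineq: ineq 1} holds, which rearranges to $\|y_{j+1}-\tilde x_j\|^2 \le \|y_{j+1}-x_0\|^2/(\chi A_{j+1}L_{j+1})$. Combining this with the bound $\|u_{j+1}\| \le \zeta\|y_{j+1}-\tilde x_j\|$ from \eqref{ubound-1} gives $\|u_{j+1}\| \le \zeta\,\|y_{j+1}-x_0\|/\sqrt{\chi A_{j+1}L_{j+1}}$, so the success test \eqref{u sigma criteria} is forced to hold as soon as $A_{j+1}L_{j+1} \ge \zeta^2/(\chi\sigma^2)$. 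Hence the method cannot pass step~5 without stopping once this threshold is reached. I would then invoke the second lower bound in \eqref{eq:Akest-sfista1}, namely $A_{j+1}L_{j+1} \ge (1+Q^{-1})^{2j}$, together with the elementary inequality $\log(1+Q^{-1}) \ge 1/(1+Q)$, to conclude that the threshold is met, and thus the method halts, within the first ceiling of \eqref{eq:eq1}.

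For the line-search count, the key observation is that the descent inequality implied by \eqref{ineq:uppercurvature1} forces the test \eqref{ineq check} to hold whenever $(1-\chi)L_{j+1} \ge \bar L$, i.e.\ whenever $L_{j+1} \ge \bar L/(1-\chi)$; consequently a line-search failure can occur only at a value $L_{j+1} < \bar L/(1-\chi)$. Since $\{L_j\}$ is nondecreasing by Lemma~\ref{lem:gamma-sfista0}(a) and $L_{j+1}$ persists across outer iterations (step~1 resets $L_{j+1}=L_j$), every multiplication by $\beta$ occurs at one of the geometrically increasing values $L_0, L_0\beta, L_0\beta^2, \dots$, and the largest of these at which a failure is possible is below $\bar L/(1-\chi)$. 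Counting the number $m$ of multiplications then gives $L_0\beta^{m-1} < \bar L/(1-\chi)$, which after taking logarithms and using the $\log^+_0$ convention yields $m \le \lceil \log^+_0(\bar L/((1-\chi)L_0))/\log\beta\rceil$, exactly the second ceiling of \eqref{eq:eq1}.

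The main obstacle I anticipate is the bookkeeping of the global, rather than per-iteration, behaviour of the Lipschitz estimate: because step~1 carries $L_{j+1}=L_j$ forward, the line-search multiplications accumulate across all outer iterations and must be counted once, globally, rather than re-counted at each $j$. A secondary point requiring care is confirming that termination is genuinely forced, namely that the method either fails in step~4 or succeeds in step~5 but in either case halts, so that the outer-iteration bound applies regardless of which branch is taken. Adding the two ceilings then yields \eqref{eq:eq1}.
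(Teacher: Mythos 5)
Your proposal is correct and follows essentially the same route as the paper's proof: the outer-iteration count is bounded by chaining \eqref{ineq: ineq 1}, \eqref{ubound-1}, the lower bound \eqref{eq:Akest-sfista1} on $A_jL_j$, and the inequality $\log(1+Q^{-1})\ge 1/(1+Q)$ to force the success test \eqref{u sigma criteria} within the first ceiling, while the line-search multiplications are counted globally using the monotonicity of $\{L_j\}$ and the fact (underlying \eqref{upper bound}) that a failure of \eqref{ineq check} can only occur below $\bar L/(1-\chi)$. Your explicit bookkeeping of the two contributions to the resolvent-evaluation count matches the paper's observation that exactly one resolvent evaluation occurs per execution of step 2.
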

\begin{proof}
Let $l$ denote the first quantity in \eqref{eq:eq1}. Using 
this definition and the
inequality $\log (1+ \alpha) \ge \alpha/(1+\alpha)$ for
any $\alpha>-1$, it is easy to verify that
\begin{equation}\label{Q bound}
\left(1+Q^{-1}\right)^{2(l-1)} \ge 
\frac{\zeta^2}{\chi \sigma^2}.
\end{equation}
We claim that ADAP-FISTA terminates with success or failure in at most $l$ iterations.
Indeed, it suffices to show that
if ADAP-FISTA has not stopped with failure
up to (and including) the
$l$-th iteration, then
it must stop successfully
at the $l$-th iteration.
So, assume that
ADAP-FISTA has not stopped with failure
up to the
$l$-th iteration.
In view of step~4 of ADAP-FISTA,
it follows that
\eqref{ineq: ineq 1} holds with $j=l-1$.

This observation together with  
the inequality in \eqref{ubound-1} with $j=l-1$, \eqref{eq:Akest-sfista1} with $j=l$,
and \eqref{Q bound},
then imply that
\begin{equation}\label{bound diff-1}
\|y_{l}-x_{0}\|^{2} \overset{\eqref{ineq: ineq 1}}{\geq} \chi A_{l}L_{l} \|y_{l}-\tilde x_{l-1}\|^2
\overset{\eqref{ubound-1}}{\ge}
\frac{\chi}{\zeta^2}A_lL_l \|u_{l}\|^2\overset{\eqref{eq:Akest-sfista1}}{\geq}\frac{\chi}{\zeta^2}
\left(1+Q^{-1} \right)^{2(l-1)}
\|u_l\|^2 \overset{\eqref{Q bound}}{\geq} \frac{1}{\sigma^2}\|u_l\|^2,
\end{equation}
and hence that
\eqref{u sigma criteria} is satisfied.
In view of Step 5 of
ADAP-FISTA, the method
must
successfully stop at the
end of the $l$-th iteration.
We have thus shown that
the above claim holds.
Moreover, in view of \eqref{upper bound},
it follows that
the second term in 
\eqref{eq:eq1} is a bound
on the total number of times $L_j$ is multiplied by $\beta$ and step 2 is repeated. Since exactly one resolvent evaluation occurs every time step 2 is executed, the desired conclusion follows.
\end{proof}

We are now ready to give the proof of Proposition~\ref{prop:nest_complex1}.
\begin{proof}[of Proposition~\ref{prop:nest_complex1}]
(a) The result immediately follows from Proposition~\ref{exact ACG complexity} and the assumption that $L_0 = {\cal O}(\bar L)$.

(b) This is immediate from the termination criterion \eqref{u sigma criteria} in step 5 of ADAP-FISTA and the inclusion in \eqref{ubound-1}.

\end{proof}

\subsection{Proof of Proposition~\ref{strongly convex fista}}
This subsection is dedicated to proving Proposition~\ref{strongly convex fista}. Thus, for the remainder of this subsection, assume that $\psi_s$ is $\mu$-strongly convex. The first lemma below presents important properties of the iterates generated by ADAP-FISTA.
\begin{lemma}\label{lem:gamma-sfista}
For every $ j\ge 0 $ and $x \in \R^n$, define 
	\begin{align}
		\gamma_j(x)&:=\ell_{\psi_s}(y_{j+1},\tilde x_j) + \psi_n(y_{j+1})+\inner{s_{j+1}}{x - y_{j+1}}+\frac{\mu}{2}\|y_{j+1}-\tilde x_j\|^2
		+ \frac{\mu}2 \|x-y_{j+1}\|^2, \label{def:gamma-sfista}
	\end{align}
where $\psi:=\psi_s+\psi_n$ and $s_{j+1}$ are as in \eqref{OptProb1} and \eqref{eq:sk}, respectively.
Then, for every $j \ge 0$, we have:
	\begin{align}
		y_{j+1} &= \argmin _{x}\left\{\gamma_{j}(x)+\frac{L_{j+1}-\mu}{2}\left\|x-\tilde{x}_{j}\right\|^{2}\right\};\label{eq:min-sfista}\\
		x_{j+1}&=\underset{x \in \R^n}\argmin\left\{a_{j} \gamma_{j}(x)+\tau_j \left\|x-x_{j}\right\|^{2} /2 \right\}.\label{xgamma}
		\end{align}
\end{lemma}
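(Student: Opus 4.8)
The plan is to prove both identities by direct verification: each right-hand objective is a smooth strongly convex quadratic in $x$, hence has a unique minimizer pinned down by its first-order optimality condition, and I will simply check that this condition holds at the claimed point. The key preliminary observation is that in the definition \eqref{def:gamma-sfista} of $\gamma_j$, the terms $\ell_{\psi_s}(y_{j+1};\tilde x_j)$, $\psi_n(y_{j+1})$, and $\frac{\mu}{2}\|y_{j+1}-\tilde x_j\|^2$ are constants in $x$; thus $\gamma_j(\cdot)$ is a smooth quadratic with gradient $\nabla\gamma_j(x) = s_{j+1} + \mu(x-y_{j+1})$. Since $L_0>\mu$ and $\{L_j\}$ is nondecreasing by Lemma~\ref{lem:gamma-sfista0}(a), we have $L_{j+1}>\mu$, so $a_j$ in \eqref{def:ak-sfista1} is well defined and positive and both auxiliary objectives are genuinely strongly convex with unique minimizers.

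For \eqref{eq:min-sfista}, I would set the gradient of $\gamma_j(x) + \frac{L_{j+1}-\mu}{2}\|x-\tilde x_j\|^2$ to zero, which gives the linear equation $s_{j+1} + \mu(x-y_{j+1}) + (L_{j+1}-\mu)(x-\tilde x_j) = 0$. Substituting the definition $s_{j+1}=(L_{j+1}-\mu)(\tilde x_j - y_{j+1})$ from \eqref{eq:sk} makes the two terms proportional to $\tilde x_j$ cancel and collapses the equation to $L_{j+1}\,x = L_{j+1}\,y_{j+1}$, i.e. $x=y_{j+1}$. This identifies $y_{j+1}$ as the unique minimizer and establishes \eqref{eq:min-sfista}.

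For \eqref{xgamma}, I would set the gradient of $a_j\gamma_j(x) + \frac{\tau_j}{2}\|x-x_j\|^2$ to zero, obtaining $a_j[s_{j+1}+\mu(x-y_{j+1})] + \tau_j(x-x_j)=0$, which rearranges to $(\tau_j + a_j\mu)\,x = \mu a_j y_{j+1} + \tau_j x_j - a_j s_{j+1}$. Replacing the left-hand coefficient using the update $\tau_{j+1}=\tau_j + a_j\mu$ from \eqref{eq:taunext-sfista1} gives $x = \tau_{j+1}^{-1}[\mu a_j y_{j+1} + \tau_j x_j - a_j s_{j+1}]$, which is exactly the definition of $x_{j+1}$ in \eqref{eq:xnext-sfista1}. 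Hence $x_{j+1}$ is the unique minimizer, proving \eqref{xgamma}.

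There is no substantive obstacle here; the lemma is a bookkeeping verification that the algorithm's updates \eqref{eq:ynext-sfista1} and \eqref{eq:xnext-sfista1} coincide with the minimizers of the auxiliary quadratics built from $\gamma_j$. The only points requiring care are discarding the $x$-independent terms of $\gamma_j$ when differentiating and correctly tracking the cancellations through the definitions of $s_{j+1}$ and $\tau_{j+1}$. It is worth noting, although not needed for the computation, that the algebraic coincidence in \eqref{eq:min-sfista} is precisely what makes $\gamma_j$ a legitimate quadratic lower model of $\psi$ agreeing with the proximal definition \eqref{eq:ynext-sfista1} of $y_{j+1}$, since $s_{j+1}$ encodes subgradient information of $\psi_n$ at $y_{j+1}$.
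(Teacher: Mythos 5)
Your proof is correct and follows essentially the same route as the paper's: both identities are established by checking the first-order optimality condition of a strongly convex quadratic, using $\nabla\gamma_j(x)=s_{j+1}+\mu(x-y_{j+1})$ together with the definitions of $s_{j+1}$ in \eqref{eq:sk} and of $x_{j+1}$, $\tau_{j+1}$ in \eqref{eq:xnext-sfista1} and \eqref{eq:taunext-sfista1}. The only cosmetic difference is that you solve the stationarity equation forward for $x$ while the paper verifies it at the claimed point; the computation is identical.
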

\begin{proof}
Since $\nabla \gamma_j(y_{j+1})=s_{j+1}$, it follows from
\eqref{eq:sk} that $y_{j+1}$ satisfies the optimality condition for \eqref{eq:min-sfista},
and thus the relation in \eqref{eq:min-sfista} follows. Furthermore, we have that:
\begin{align*}
 a_j \nabla \gamma_j(x_{j+1})+\tau_j(x_{j+1}-x_j)&=a_js_{j+1}+a_j\mu(x_{j+1}-y_{j+1})+\tau_j(x_{j+1}-x_{j})\\
 &\overset{\eqref{eq:taunext-sfista1}}{=} a_j s_{j+1} -\mu a_jy_{j+1}-\tau_jx_j+ \tau_{j+1}x_{j+1}
 \overset{\eqref{eq:xnext-sfista1}}{=}0
\end{align*}
and thus \eqref{xgamma} follows. 
\end{proof}

Before stating the next lemma, recall that if a closed function $\Psi:\R^n\to\R\cup\{+\infty\}$ is $\nu$-convex with modulus $\nu>0$, then it has an unique global minimum $z^*$ and
\begin{equation}\label{ineq:nu-convex}
\Psi(z^*) +\frac{\nu}{2}\|\cdot - z^*\|^2\leq \Psi(\cdot). 
\end{equation}

\begin{lemma} \label{lm:hysub-3-sfista}
	For every $ j\ge 0 $ and $x \in \R^n$, we have
	\begin{align}\label{ineq:recur}
	A_j\gamma_j(y_j) &+ a_j\gamma_j(x) + \frac{\tau_j}2 \|x_j - x\|^2 - \frac{\tau_{j+1}}2 \|x_{j+1} - x\|^2 \nonumber \\
	&\ge A_{j+1}\psi(y_{j+1})+\frac{\chi A_{j+1} L_{j+1}}{2}\|y_{j+1}-\tilde x_{j}\|^2.  
	\end{align}
\end{lemma}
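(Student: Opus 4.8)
The plan is to prove the recursion \eqref{ineq:recur} by treating its two sides separately and then reducing the surviving inequality to a perfect square. The key preliminary observation is that $\gamma_j$ is an explicit $\mu$-strongly convex quadratic centered at $y_{j+1}$: directly from \eqref{def:gamma-sfista} one rewrites
\[
\gamma_j(x) = \gamma_j(y_{j+1}) + \inner{s_{j+1}}{x - y_{j+1}} + \frac{\mu}{2}\|x - y_{j+1}\|^2,
\]
with $\gamma_j(y_{j+1}) = \ell_{\psi_s}(y_{j+1};\tilde x_j) + \psi_n(y_{j+1}) + \frac{\mu}{2}\|y_{j+1}-\tilde x_j\|^2$, since the affine and quadratic parts vanish at $x=y_{j+1}$. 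First I would keep this identity and the relation $s_{j+1} = (L_{j+1}-\mu)(\tilde x_j - y_{j+1})$ from \eqref{eq:sk} in hand throughout.

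Then I would dispose of the right-hand side. Bounding $\psi_s(y_{j+1})$ by the acceptance inequality \eqref{ineq check}, adding $\psi_n(y_{j+1})$, and substituting the identity for $\gamma_j(y_{j+1})$, the curvature coefficients combine as $(1-\chi)+\chi=1$ and yield
\[
A_{j+1}\psi(y_{j+1}) + \frac{\chi A_{j+1} L_{j+1}}{2}\|y_{j+1}-\tilde x_j\|^2 \le A_{j+1}\gamma_j(y_{j+1}) + \frac{A_{j+1}(L_{j+1}-\mu)}{2}\|y_{j+1}-\tilde x_j\|^2.
\]
For the left-hand side, the function $a_j\gamma_j(\cdot) + \frac{\tau_j}{2}\|\cdot - x_j\|^2$ is $\tau_{j+1}$-strongly convex, because $\tau_{j+1}=\tau_j+a_j\mu$ by \eqref{eq:taunext-sfista1}, and is minimized at $x_{j+1}$ by \eqref{xgamma}; hence the strong-convexity bound \eqref{ineq:nu-convex} gives
\[
a_j\gamma_j(x) + \frac{\tau_j}{2}\|x_j - x\|^2 - \frac{\tau_{j+1}}{2}\|x_{j+1} - x\|^2 \ge a_j\gamma_j(x_{j+1}) + \frac{\tau_j}{2}\|x_j - x_{j+1}\|^2.
\]
This removes all $x$-dependence, so it remains to prove
\[
A_j\gamma_j(y_j) + a_j\gamma_j(x_{j+1}) + \frac{\tau_j}{2}\|x_j - x_{j+1}\|^2 \ge A_{j+1}\gamma_j(y_{j+1}) + \frac{A_{j+1}(L_{j+1}-\mu)}{2}\|y_{j+1}-\tilde x_j\|^2.
\]

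Finally, I would collapse this to a nonnegative quantity. Substituting the explicit quadratic form of $\gamma_j$ at $y_j$ and $x_{j+1}$, cancelling the common $A_{j+1}\gamma_j(y_{j+1})$, and rewriting $A_j y_j + a_j x_{j+1} - A_{j+1}y_{j+1} = A_{j+1}(\tilde x_j - y_{j+1}) + a_j(x_{j+1}-x_j)$ via $A_{j+1}\tilde x_j = A_j y_j + a_j x_j$ from \eqref{def:ak-sfista1}, the inner product $\inner{s_{j+1}}{A_{j+1}(\tilde x_j - y_{j+1})}$ equals exactly $A_{j+1}(L_{j+1}-\mu)\|\tilde x_j - y_{j+1}\|^2$, that is, twice the curvature term on the right. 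Trading the residual curvature term for $\frac{a_j^2}{2\tau_j}\|s_{j+1}\|^2$ through the identity $\tau_j A_{j+1}/a_j^2 = L_{j+1}-\mu$ in \eqref{tauproperty}, the three terms carrying $\|s_{j+1}\|^2$, $\inner{s_{j+1}}{x_{j+1}-x_j}$, and $\frac{\tau_j}{2}\|x_{j+1}-x_j\|^2$ assemble into $\frac{1}{2\tau_j}\|a_j s_{j+1} + \tau_j(x_{j+1}-x_j)\|^2 \ge 0$, while the leftover $\frac{\mu}{2}\bigl[A_j\|y_j - y_{j+1}\|^2 + a_j\|x_{j+1}-y_{j+1}\|^2\bigr]$ is manifestly nonnegative. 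I expect the only real obstacle to be the bookkeeping in this last step, specifically using \eqref{tauproperty} to convert the curvature term into $\|s_{j+1}\|^2$ and recognizing that the cross terms complete a single square; every other step is a direct invocation of strong convexity or of the acceptance test \eqref{ineq check}.
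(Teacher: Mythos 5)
Your proof is correct, and it follows the same overall architecture as the paper's: both bound the $x$-dependent part of the left-hand side via the $\tau_{j+1}$-strong convexity of $a_j\gamma_j(\cdot)+\tfrac{\tau_j}{2}\|\cdot-x_j\|^2$ and its minimizer $x_{j+1}$ from \eqref{xgamma}, both invoke the acceptance test \eqref{ineq check} to pass between $\psi(y_{j+1})$ and $\gamma_j(y_{j+1})$, and both reduce the claim to the same $x$-free intermediate inequality. Where you genuinely diverge is in how that intermediate inequality is established. The paper applies Jensen's inequality to the convex function $\gamma_j$ to merge $A_j\gamma_j(y_j)+a_j\gamma_j(x_{j+1})$ into $A_{j+1}\gamma_j$ evaluated at a convex combination, relaxes to a minimum over all $x$, and then invokes the variational characterization \eqref{eq:min-sfista} of $y_{j+1}$ from Lemma~\ref{lem:gamma-sfista} together with \eqref{tauproperty}. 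You instead bypass \eqref{eq:min-sfista} entirely: you exploit that $\gamma_j$ is an explicit quadratic centered at $y_{j+1}$ with gradient $s_{j+1}=(L_{j+1}-\mu)(\tilde x_j-y_{j+1})$, expand everything, and complete the square $\tfrac{1}{2\tau_j}\|a_j s_{j+1}+\tau_j(x_{j+1}-x_j)\|^2\ge 0$ (which, via \eqref{eq:xnext-sfista1}, is in fact $\tfrac{\mu^2 a_j^2}{2\tau_j}\|y_{j+1}-x_{j+1}\|^2$), with the leftover $\mu$-terms manifestly nonnegative. Your route is more computational but more self-contained, needing only \eqref{xgamma} and \eqref{tauproperty} from Lemma~\ref{lem:gamma-sfista}; the paper's route is shorter on algebra and makes the "estimate-sequence" structure of the argument more visible. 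I verified the bookkeeping in your final step — the conversion of the curvature term into $\tfrac{a_j^2}{2\tau_j}\|s_{j+1}\|^2$ via $\tau_j A_{j+1}/a_j^2=L_{j+1}-\mu$ and the identity $A_jy_j+a_jx_{j+1}-A_{j+1}y_{j+1}=A_{j+1}(\tilde x_j-y_{j+1})+a_j(x_{j+1}-x_j)$ — and it all checks out.
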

\begin{proof}
Using \eqref{xgamma}, the second identity in \eqref{eq:taunext-sfista1}, and the fact that $\Psi_j:=a_j\gamma_j(\cdot)+\tau_j \|\cdot-x_j\|^2/2$ is  $(\tau_j+\mu a_j)$-convex, it follows from 
\eqref{ineq:nu-convex} with $\Psi=\Psi_j$ and $\nu=\tau_{j+1}$ that 
\begin{align*}
a_j\gamma_j(x) + \frac{\tau_j}2 \|x-x_j\|^2 - \frac{\tau_{j+1}}2 \|x-x_{j+1}\|^2 \ge a_j\gamma_j(x_{j+1}) + \frac{\tau_j}2 \|x_{j+1}-x_j\|^2 \quad
\forall x \in \R^n.
\end{align*}
	Using the convexity of $ \gamma_j $, the definitions of $A_{j+1}$ and  $ \tx_j $ in \eqref{eq:taunext-sfista1} and \eqref{def:ak-sfista1}, respectively, and the second equality in \eqref{tauproperty}, we have
	\begin{align*}
		A_j\gamma_j(y_j) &+ a_j\gamma_j(x_{j+1}) + \frac{\tau_j}2 \|x_{j+1}-x_j\|^2 \\
		&\ge A_{j+1} \gamma_j\left( \frac{A_jy_j+a_jx_{j+1}}{A_{j+1}} \right) + \frac{\tau_jA^2_{j+1}}{2a_j^2}\left\| \frac{A_jy_j+a_jx_{j+1}}{A_{j+1}}- \frac{A_jy_j+a_jx_{j}}{A_{j+1}} \right\| ^2\\
	&\overset{\eqref{def:ak-sfista1}}{\geq} A_{j+1} \min_{x}\left[ \gamma_j\left( x \right) + \frac{\tau_jA_{j+1}}{2a_j^2} \left\| x-\tx_j\right\| ^2\right] \\
		&\overset{\eqref{tauproperty}}{=} A_{j+1}\min_{x}\left\lbrace \gamma_j(x) + \frac{L_{j+1}-\mu}{2}\|x-\tx_j\|^2\right\rbrace \\
		&\overset{\eqref{eq:min-sfista}}{=} A_{j+1}\left[ \gamma_j(y_{j+1}) + \frac{L_{j+1}-\mu}{2}\|y_{j+1}-\tx_j\|^2\right]\\
		&\overset{\eqref{def:gamma-sfista}}{=} A_{j+1}\left[\ell_{\psi_s}(y_{j+1};\tilde x_j) +\psi_n(y_{j+1})+
	\frac{L_{j+1}}{2}\|y_{j+1}-\tx_j\|^2 \right]  \\
		&\overset{\eqref{ineq check}}{\geq} A_{j+1}\left[ \psi(y_{j+1})  +
		\frac{\chi L_{j+1}}{2}\|y_{j+1}-\tx_j\|^2 \right].
	\end{align*}
	The conclusion of the lemma now follows by combining the above two relations.
	\end{proof}

\begin{lemma} \label{lm:gammakphi}
For
every $j \ge 0$, we have $\gamma_j \leq \psi$.
\end{lemma}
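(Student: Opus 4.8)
The plan is to prove $\gamma_j \le \psi$ pointwise by combining two convexity estimates—one for the smooth part $\psi_s$ and one for the nonsmooth part $\psi_n$—and then checking that these two lower bounds add up \emph{exactly} to $\gamma_j(x)$, so that the only inequalities invoked are the convexity estimates themselves.

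First I would use the two first-order lower bounds available at $\tilde x_j$ and $y_{j+1}$. Since $\psi_s$ is $\mu$-strongly convex (the standing assumption of this subsection), its tangent bound at $\tilde x_j$ reads, for every $x \in \R^n$,
\begin{equation}\label{plan:sc}
\psi_s(x) \ge \ell_{\psi_s}(x;\tilde x_j) + \frac{\mu}{2}\|x - \tilde x_j\|^2 .
\end{equation}
Next I would extract a subgradient of $\psi_n$ at $y_{j+1}$ from the optimality condition of the subproblem \eqref{eq:ynext-sfista1}. As already recorded in the proof of Lemma~\ref{lem:gamma-sfista0}, this condition gives $0 \in \nabla \psi_s(\tilde x_j) + \partial \psi_n(y_{j+1}) - L_{j+1}(\tilde x_j - y_{j+1})$, so that $L_{j+1}(\tilde x_j - y_{j+1}) - \nabla \psi_s(\tilde x_j) \in \partial \psi_n(y_{j+1})$; convexity of $\psi_n$ then yields
\begin{equation}\label{plan:conv}
\psi_n(x) \ge \psi_n(y_{j+1}) + \inner{L_{j+1}(\tilde x_j - y_{j+1}) - \nabla \psi_s(\tilde x_j)}{x - y_{j+1}} .
\end{equation}

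Adding \eqref{plan:sc} and \eqref{plan:conv}, the two occurrences of $\nabla \psi_s(\tilde x_j)$ combine into $\inner{\nabla\psi_s(\tilde x_j)}{y_{j+1} - \tilde x_j}$, which together with $\psi_s(\tilde x_j)$ reproduces the affine head $\ell_{\psi_s}(y_{j+1};\tilde x_j)$ of $\gamma_j$. It then remains to match the leftover quadratic and cross terms with those in \eqref{def:gamma-sfista}. Using $s_{j+1} = (L_{j+1}-\mu)(\tilde x_j - y_{j+1})$ from \eqref{eq:sk}, the inner product $\inner{L_{j+1}(\tilde x_j - y_{j+1})}{x - y_{j+1}}$ splits into the $s_{j+1}$-term plus $\mu\inner{\tilde x_j - y_{j+1}}{x - y_{j+1}}$, and the polarization identity
\begin{equation}\label{plan:polar}
\tfrac{1}{2}\|x-\tilde x_j\|^2 + \inner{\tilde x_j - y_{j+1}}{x - y_{j+1}} = \tfrac{1}{2}\|x - y_{j+1}\|^2 + \tfrac{1}{2}\|\tilde x_j - y_{j+1}\|^2
\end{equation}
converts $\tfrac{\mu}{2}\|x-\tilde x_j\|^2$ plus that $\mu$-weighted inner product into exactly the pair $\tfrac{\mu}{2}\|x - y_{j+1}\|^2 + \tfrac{\mu}{2}\|\tilde x_j - y_{j+1}\|^2$ appearing in $\gamma_j$. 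Hence the sum of the right-hand sides of \eqref{plan:sc} and \eqref{plan:conv} equals $\gamma_j(x)$, giving $\psi(x) \ge \gamma_j(x)$.

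The main (and essentially only) obstacle is bookkeeping: one must track the factor $\mu$ carefully so that the $L_{j+1}$-weighted inner product decomposes cleanly into the $s_{j+1}$-contribution and a $\mu$-weighted inner product, and then apply \eqref{plan:polar} to absorb the latter into squared-norm terms. Because \eqref{plan:polar} is an exact identity, the combined estimate is tight in its quadratic part; there is no slack to lose, so the argument closes with $\gamma_j \le \psi$ holding for every $x \in \R^n$, as claimed.
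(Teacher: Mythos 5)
Your proof is correct and uses the same two essential ingredients as the paper's: the $\mu$-strong convexity lower bound for $\psi_s$ at $\tilde x_j$ and the optimality condition of the prox subproblem \eqref{eq:ynext-sfista1} giving a subgradient of $\psi_n$ at $y_{j+1}$. The paper merely packages these through the auxiliary function $\tilde\gamma_j(x)=\ell_{\psi_s}(x;\tilde x_j)+\psi_n(x)+\tfrac{\mu}{2}\|x-\tilde x_j\|^2$ and the strong-convexity subgradient inequality, whereas you expand everything and close the identity with polarization; the computations are the same.
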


\begin{proof}
Define: 
    \begin{equation}\tilde \gamma_j(x):= \ell_{\psi_s}(x;\tilde x_j)  + \psi_n(x)+\frac{\mu}{2}\|x-\tilde x_j\|^2 \label{def:tgamma-sfista}.
    \end{equation}
It follows immediately from the fact that $\psi_s$ is $\mu$-convex that $\tilde \gamma_j \leq \psi$. Furthermore, immediately from the definition of $y_{j+1}$ in \eqref{eq:ynext-sfista1}, we can write:
    \begin{equation}
		y_{j+1}= \argmin_{x}\left\{\tilde{\gamma}_{j}(x)+\frac{L_{j+1}-\mu}{2}\left\|x-\tilde{x}_{j}\right\|^{2}\right\} \label{eq:min-sfista'}.
		\end{equation}
Now, clearly from \eqref{eq:min-sfista'} and the definition of $s_{j+1}$ in \eqref{eq:sk}, we see that $s_{j+1} \in \partial \tilde \gamma_j(y_{j+1})$. Furthermore, since $\tilde \gamma_j$ is $\mu$-convex, it follows from the subgradient rule for the sum of convex functions that the above inclusion is equivalent to $s_{j+1} \in \partial\left(\tilde\gamma_j(\cdot)-\frac{\mu}{2}\|\cdot-y_{j+1}\|^2\right)(y_{j+1}).$
Hence, the subgradient inequality and the fact that $\tilde \gamma_j(x) \leq \psi(x)$ imply that for all $x\in \Re^{n}$:
    \begin{align*}
    \psi(x)\geq \tilde \gamma_j(x)&\geq\tilde \gamma_j(y_{j+1})+\inner{s_{j+1}}{x-y_{j+1}}+\frac{\mu}{2}\|x-y_{j+1}\|^2=\gamma_j(x)
    \end{align*}
and thus the statement of the lemma follows.     
\end{proof}

\begin{lemma} \label{lm:hysub-4-fista}
For every $j \ge 0$ and $x \in \dom \psi_n$, we have
\[
\eta_j(x) - \eta_{j+1} (x) \ge   \frac{\chi A_{j+1}L_{j+1}}{2} \|y_{j+1} - \tx_j \|^2
\]
where
\[
\eta_j(x) := A_j [ \psi(y_j) - \psi(x) ] + \frac{\tau_j}{2} \|x-x_j\|^2.
\]
\end{lemma}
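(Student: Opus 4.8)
The plan is to derive this telescoping-type inequality directly from the two immediately preceding lemmas, namely Lemma~\ref{lm:hysub-3-sfista} and Lemma~\ref{lm:gammakphi}, after expanding the difference $\eta_j(x)-\eta_{j+1}(x)$. First I would substitute the definition of $\eta_j$ and $\eta_{j+1}$ and collect the $\psi(x)$ terms, using the relation $A_{j+1}=A_j+a_j$ from \eqref{eq:taunext-sfista1} to rewrite the coefficient $A_{j+1}-A_j$ as $a_j$. This yields the clean expression
\[
\eta_j(x)-\eta_{j+1}(x) = A_j\psi(y_j) - A_{j+1}\psi(y_{j+1}) + a_j\psi(x) + \frac{\tau_j}{2}\|x-x_j\|^2 - \frac{\tau_{j+1}}{2}\|x-x_{j+1}\|^2.
\]

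The next step is to lower-bound the two terms $A_j\psi(y_j)$ and $a_j\psi(x)$ that carry $\psi$ with positive coefficients. Since $A_j\ge 0$ and $a_j\ge 0$ (recall $A_0=0$, $A_{j+1}=A_j+a_j$, and $a_j>0$ because $L_{j+1}>\mu$ by Lemma~\ref{lem:gamma-sfista0}), Lemma~\ref{lm:gammakphi}'s inequality $\gamma_j\le\psi$ gives $A_j\psi(y_j)\ge A_j\gamma_j(y_j)$ and $a_j\psi(x)\ge a_j\gamma_j(x)$. Replacing these in the displayed identity produces
\[
\eta_j(x)-\eta_{j+1}(x) \ge A_j\gamma_j(y_j) + a_j\gamma_j(x) + \frac{\tau_j}{2}\|x-x_j\|^2 - \frac{\tau_{j+1}}{2}\|x-x_{j+1}\|^2 - A_{j+1}\psi(y_{j+1}).
\]

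Finally I would recognize that the right-hand side above is precisely the left-hand side of Lemma~\ref{lm:hysub-3-sfista} with the term $A_{j+1}\psi(y_{j+1})$ moved across, so that lemma asserts exactly that this quantity is at least $\tfrac{\chi A_{j+1}L_{j+1}}{2}\|y_{j+1}-\tilde x_j\|^2$. Chaining the two inequalities gives the claim. I do not anticipate a genuine obstacle here: the argument is a two-line combination of established results, and the only point requiring any care is the bookkeeping of signs, specifically confirming that $\gamma_j\le\psi$ may be applied to both the $A_j\psi(y_j)$ and $a_j\psi(x)$ terms because their coefficients are nonnegative, and matching the quadratic terms $\tfrac{\tau_j}{2}\|x-x_j\|^2-\tfrac{\tau_{j+1}}{2}\|x-x_{j+1}\|^2$ verbatim between the expansion and Lemma~\ref{lm:hysub-3-sfista}.
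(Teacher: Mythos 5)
Your proposal is correct and is essentially the paper's own proof read in the opposite direction: the paper starts from Lemma~\ref{lm:hysub-3-sfista}, subtracts $A_{j+1}\psi(x)$ from both sides, and invokes Lemma~\ref{lm:gammakphi} together with $A_{j+1}=A_j+a_j$, which is exactly your combination of steps. The sign bookkeeping you flag (nonnegativity of $A_j$ and $a_j$ so that $\gamma_j\le\psi$ can be applied termwise) is valid and the argument goes through as written.
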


\begin{proof}
Subtracting  $A_{j+1} \psi(x)$ from both sides of the inequality in \eqref{ineq:recur} and using Lemma~\ref{lm:gammakphi} we have
\begin{align*}
A_j \psi (y_j) + & a_j\psi (x)-A_{j+1}\psi(x) + \frac{\tau_j}2 \|x_j - x\|^2 - \frac{\tau_{j+1}}2 \|x_{j+1} - x\|^2 \\
	& \ge  A_{j+1} \psi(y_{j+1})-A_{j+1}\psi(x) + \frac{\chi A_{j+1}L_{j+1}}{2} \| y_{j+1} - \tx_j \|^2 .
\end{align*}
The result now follows from the first equality in \eqref{eq:taunext-sfista1} and the definition of $\eta_j(x)$.
\end{proof}

We now state a result that will be important for deriving complexity bounds for ADAP-FISTA.
\begin{lemma} \label{lm:hysub-5-ac}
For every $j \ge 0$ and $x \in \dom \psi_n$, we have
\begin{align}\label{key convergence inequality}
 A_j [ \psi(y_j) - \psi(x) ] + \frac{\tau_j}2 \|x-x_j\|^2 \le
 \frac12 \|x-x_0\|^2 - \frac{\chi}{2} \sum_{i=0}^{j-1} A_{i+1}L_{i+1} \| y_{i+1} - \tx_i \|^2.
\end{align}
\end{lemma}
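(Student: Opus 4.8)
The plan is to recognize that the left-hand side of \eqref{key convergence inequality} is precisely the quantity $\eta_j(x)$ introduced in Lemma~\ref{lm:hysub-4-fista}, and then to obtain the bound by telescoping the per-iteration decrease estimate established there. Concretely, Lemma~\ref{lm:hysub-4-fista} gives, for every index $i \ge 0$ and every $x \in \dom \psi_n$,
\[
\eta_i(x) - \eta_{i+1}(x) \ge \frac{\chi A_{i+1} L_{i+1}}{2}\|y_{i+1} - \tx_i\|^2 .
\]
First I would sum this inequality over $i = 0, 1, \ldots, j-1$. The left-hand side telescopes to $\eta_0(x) - \eta_j(x)$, yielding
\[
\eta_0(x) - \eta_j(x) \ge \frac{\chi}{2}\sum_{i=0}^{j-1} A_{i+1} L_{i+1}\|y_{i+1} - \tx_i\|^2 .
\]

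Next I would evaluate $\eta_0(x)$ using the initialization of ADAP-FISTA in step 0, namely $A_0 = 0$, $\tau_0 = 1$, and $y_0 = x_0$. From the definition $\eta_0(x) = A_0[\psi(y_0) - \psi(x)] + (\tau_0/2)\|x - x_0\|^2$, the first term vanishes (as $A_0 = 0$) and the second reduces to $\frac{1}{2}\|x - x_0\|^2$. Substituting this value into the telescoped inequality and rearranging to isolate $\eta_j(x)$ gives exactly \eqref{key convergence inequality}, since $\eta_j(x) = A_j[\psi(y_j) - \psi(x)] + (\tau_j/2)\|x - x_j\|^2$ matches the left-hand side of the target.

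There is no genuine obstacle here: all of the analytic work — in particular the descent estimate that produces the error terms $\chi A_{i+1}L_{i+1}\|y_{i+1}-\tx_i\|^2$ and relies on the adaptive acceptance test \eqref{ineq check} — has already been carried out in Lemmas~\ref{lm:hysub-3-sfista} and \ref{lm:hysub-4-fista}, so this statement is purely an accumulation step. The only points requiring care are the bookkeeping of the telescoping sum, ensuring that the summation range $i=0$ to $j-1$ matches the index on the error terms, and the correct use of the initial values $A_0=0$ and $\tau_0=1$ to simplify $\eta_0(x)$ to $\frac{1}{2}\|x-x_0\|^2$.
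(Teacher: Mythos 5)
Your proposal is correct and follows exactly the paper's argument: sum the inequality of Lemma~\ref{lm:hysub-4-fista} over $i=0,\dots,j-1$, telescope, and use $A_0=0$ and $\tau_0=1$ to evaluate $\eta_0(x)=\frac12\|x-x_0\|^2$. The bookkeeping is handled properly, so nothing further is needed.
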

\begin{proof}
Summing the inequality of Lemma \ref{lm:hysub-4-fista} from $j=0$ to $j=j-1$, using the facts that $A_0=0$ and $\tau_0=1$, and using the definition of $\eta_j(\cdot)$ in Lemma \ref{lm:hysub-4-fista} gives us the inequality of the lemma.
\end{proof}

We are now ready to give the proof of Proposition~\ref{strongly convex fista}.
\begin{proof}[of Proposition~\ref{strongly convex fista}]
Since $\psi_s$ is $\mu$-convex, Lemma~\ref{lm:hysub-5-ac} holds. Thus, using \eqref{key convergence inequality} with $x=y_j$, it follows that for all $j\geq 0$:
\begin{equation}\label{bound diff-3}
\|y_j-x_0\|^2 \overset{\eqref{key convergence inequality}}{\geq}\chi \sum_{i=1}^{j}A_{i}L_{i} \|y_{i}-\tilde x_{i-1}\|^2 \geq \chi A_jL_j\|y_{j}-\tilde x_{j-1}\|^2.
\end{equation}
Hence, for all $j\geq 0$, relation \eqref{ineq: ineq 1} in step~4 of ADAP-FISTA is always satisfied and thus ADAP-FISTA never fails. In view of this observation and Proposition~\ref{prop:nest_complex1}, it follows that if $\psi_s$ is $\mu$-convex then ADAP-FISTA always terminates successfully with a $(y,u)$ satisfying relations \eqref{u inclusion 1} and \eqref{u termination 1} in a finite number of iterations. The inclusion $u\in (\psi_s+\psi_n)(y)$ then follows immediately from the inclusion in \eqref{u inclusion 1} and the subgradient rule for the sum of convex functions.
\end{proof}

\section{Technical Results for Proof of Lagrange Multipliers}\label{technical lagrange multiplier}
The  following basic result is used in Lemma~\ref{lem:qbounds-2}. Its proof can be found, for instance, in \cite[Lemma~A.4]{MaxJeffRen-admm}. Recall that $\nu^+_A$ denotes  the smallest positive singular value of a nonzero linear operator $A$ . 

\begin{lemma}\label{lem:linalg} 
Let $A:\Re^n \to \Re^l$ be a  nonzero linear operator. Then,
\[
\nu^+_A\|u\|\leq \|A^*u\|,   \quad \forall u \in A(\Re^n).
\]
\end{lemma}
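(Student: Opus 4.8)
The plan is to reduce the inequality to a spectral statement about the self-adjoint, positive semidefinite operator $AA^*$. First I would observe that the range $A(\Re^n)$ coincides with the orthogonal complement $(\ker A^*)^\perp$, and that $\ker(AA^*)=\ker A^*$; the latter holds because $AA^*u=0$ forces $\|A^*u\|^2=\inner{AA^*u}{u}=0$ and hence $A^*u=0$. Consequently $A(\Re^n)=\mathrm{range}(AA^*)$, which is exactly the span of the eigenvectors of $AA^*$ associated with its nonzero eigenvalues.

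Next I would invoke the spectral theorem for $AA^*$ to fix an orthonormal eigenbasis $\{e_i\}$ with $AA^*e_i=\lambda_i e_i$ and $\lambda_i\ge 0$. The nonzero $\lambda_i$ are precisely the squares of the positive singular values of $A$, so the smallest nonzero eigenvalue equals $(\nu^+_A)^2$. Given $u\in A(\Re^n)$, its expansion in this basis has no component along the kernel of $A^*$, so that $u=\sum_{i:\lambda_i>0}\inner{u}{e_i}e_i$.

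The final step is the direct computation
\[
\|A^*u\|^2=\inner{AA^*u}{u}=\sum_{i:\lambda_i>0}\lambda_i\,\inner{u}{e_i}^2\ge (\nu^+_A)^2\sum_{i:\lambda_i>0}\inner{u}{e_i}^2=(\nu^+_A)^2\|u\|^2,
\]
after which taking square roots yields the claim.

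Since this is essentially routine linear algebra, I do not expect a serious obstacle. The only points requiring care are the bookkeeping that ties the definition of $\nu^+_A$ as the smallest positive singular value of $A$ to the smallest nonzero eigenvalue of $AA^*$, and the observation that restricting to $u\in A(\Re^n)$ removes exactly the directions in $\ker A^*$ on which the inequality would otherwise fail (e.g.\ a nonzero $u\in\ker A^*$ would give $\|A^*u\|=0<\nu^+_A\|u\|$).
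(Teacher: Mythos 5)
Your argument is correct. Note that the paper does not actually prove this lemma — it simply cites it from an external reference (\cite[Lemma~A.4]{MaxJeffRen-admm}) — so there is no internal proof to compare against. Your spectral argument is the standard self-contained one and is sound: the key identities $A(\Re^n)=(\ker A^*)^\perp=\mathrm{range}(AA^*)$ and $\|A^*u\|^2=\inner{AA^*u}{u}$ are correctly justified, the identification of the smallest nonzero eigenvalue of $AA^*$ with $(\nu^+_A)^2$ is right, and the restriction to $u\in A(\Re^n)$ is exactly what removes the kernel directions on which the bound would fail. No gaps.
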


The following technical result, whose proof can be found in Lemma 3.10 of \cite{RenWilmelo2020iteration}, plays an important role in the proof of Lemma~\ref{lem:qbounds-2} below.

\begin{lemma}\label{lem:bound_xiN}
Let $h$ be a function as in (A1).
Then, for every $\delta \ge 0$, $z\in {\mathcal H}$,   and $\xi \in \partial_{\delta} h(z)$, we have
\begin{equation}\label{bound xi}
\|\xi\|{\rm dist}(u,\partial {\mathcal H}) \le \left[{\rm dist}(u,\partial {\mathcal H})+\|z-u\|\right]M_h + \inner{\xi}{z-u}+\delta \quad \forall u \in {\mathcal H}
\end{equation}
where $\partial {\cal H}$ denotes the boundary of ${\cal H}$.
\end{lemma}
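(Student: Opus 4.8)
The plan is to extract the bound from the defining inequality of the $\delta$-subdifferential evaluated at a single cleverly chosen test point, combined with the Lipschitz continuity of $h$ on ${\mathcal H}$ and one elementary geometric fact about convex sets. Throughout, fix $\delta \ge 0$, $z \in {\mathcal H}$, $\xi \in \partial_\delta h(z)$, and $u \in {\mathcal H}$, and abbreviate $d := {\rm dist}(u, \partial {\mathcal H})$. If $\xi = 0$, then the left-hand side of \eqref{bound xi} is zero while its right-hand side equals $[d + \norm{z-u}]M_h + \delta \ge 0$, so the inequality holds trivially; hence I would assume $\xi \ne 0$ from now on.

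First I would introduce the point $u' := u + d\,\xi/\norm{\xi}$, which lies on the sphere of radius $d$ about $u$. The key geometric claim is that $u' \in {\mathcal H}$; more precisely, that $\cball{u}{d} \subseteq {\mathcal H}$. This is where convexity and (compactness, hence) closedness of ${\mathcal H}$ from (A1) are essential: if some $v$ with $\norm{v-u} \le d$ were outside ${\mathcal H}$, then (since $d>0$ forces $u \in \interior {\mathcal H}$) the segment $[u,v]$ would leave ${\mathcal H}$ at a point $w$ strictly between $u$ and $v$, with $w \in \partial {\mathcal H}$, giving the contradiction ${\rm dist}(u,\partial{\mathcal H}) \le \norm{u-w} < \norm{u-v} \le d = {\rm dist}(u,\partial{\mathcal H})$. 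I expect this geometric step to be the only real obstacle; everything after it is routine algebra.

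Granting $u' \in {\mathcal H}$, so that $h(u') < +\infty$, I would substitute $z' = u'$ into the defining inequality \eqref{def:epsSubdiff} of $\partial_\delta h(z)$ to obtain $h(u') \ge h(z) + \inner{\xi}{u'-z} - \delta$. By the choice of $u'$ one has $\inner{\xi}{u'-z} = \inner{\xi}{u'-u} + \inner{\xi}{u-z} = d\norm{\xi} + \inner{\xi}{u-z}$, so rearranging gives $d\norm{\xi} \le h(u') - h(z) + \inner{\xi}{z-u} + \delta$. Finally, applying the $M_h$-Lipschitz continuity of $h$ on ${\mathcal H}$ to the pair $u', z \in {\mathcal H}$ together with the triangle inequality yields $h(u') - h(z) \le M_h\norm{u'-z} \le M_h(\norm{u'-u} + \norm{u-z}) = M_h(d + \norm{z-u})$. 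Substituting this and recalling $d = {\rm dist}(u,\partial{\mathcal H})$ produces exactly \eqref{bound xi}. The degenerate case $d = 0$ (that is, $u \in \partial{\mathcal H}$) is handled automatically, since then $u' = u$ and the same chain of inequalities applies verbatim.
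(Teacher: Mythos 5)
Your proof is correct, and it is the standard argument for this type of bound: the paper itself does not prove the lemma but defers to Lemma 3.10 of \cite{RenWilmelo2020iteration}, whose proof proceeds exactly as you do — observing that the closed ball of radius $d={\rm dist}(u,\partial{\mathcal H})$ about $u\in{\mathcal H}$ lies in the closed convex set ${\mathcal H}$, testing the $\delta$-subgradient inequality at $u'=u+d\,\xi/\|\xi\|$ to produce the term $d\|\xi\|$, and controlling $h(u')-h(z)$ by $M_h$-Lipschitz continuity. All the details you supply (the boundary-crossing argument for the ball containment, the $\xi=0$ and $d=0$ degeneracies) are sound.
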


\begin{lemma}\label{lem:qbounds-2}
Assume that $h$ is a function as in condition (A1) and $A:\Re^n \to \Re^l$ is a linear operator satisfying condition (A2).
Assume also that
the triple $(z,q,r) \in \Re^{n} \times A(\Re^{n}) \times \Re^{n}$ satisfy
$r \in \partial h(z)+A^{*}q$.
Then:
\begin{itemize}
\item[(a)] there holds
\begin{equation}\label{ineq:aux9001-2}
 \bar{d}\nu_A^{+}\|q\|
\leq  2 D_h\left(M_h + \|r\| \right)  - \inner{q}{Az-b};
\end{equation}
\item[(b)] if, in addition, 
\begin{equation}\label{q form}
q=q^-+\chi(Az-b)   
\end{equation}
for some $q^-\in \Re^l$ and $\chi>0$,
then we have 
\begin{equation}\label{q bound-2}
 \|q\|\leq \max\left\{\|q^-\|,\frac{2D_h(M_h+\|r\|)}{\bar d \nu^{+}_A} \right\}.
\end{equation}
\end{itemize}
\end{lemma}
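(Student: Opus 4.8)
The plan is to reduce both parts to the subgradient estimate of Lemma~\ref{lem:bound_xiN}, evaluated at the Slater point $\bar z$ of (A2). First I would set $\xi := r - A^*q$; since $r \in \partial h(z)+A^*q$, we have $\xi \in \partial h(z)$, and in particular $z \in \mathcal H = \dom h$. I would then invoke Lemma~\ref{lem:bound_xiN} with $\delta=0$ and $u=\bar z$, which gives $\|\xi\|\,\mathrm{dist}(\bar z,\partial\mathcal H) \le [\mathrm{dist}(\bar z,\partial\mathcal H)+\|z-\bar z\|]M_h + \inner{\xi}{z-\bar z}$. Recognizing $\mathrm{dist}(\bar z,\partial\mathcal H)=\bar d$ and bounding every diameter-type quantity by $D_h$ (using $\bar d \le D_h$ and $\|z-\bar z\|\le D_h$), the bracketed factor is at most $2D_hM_h$.

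For part (a), the crucial algebraic step is to expand $\inner{\xi}{z-\bar z} = \inner{r}{z-\bar z} - \inner{q}{A(z-\bar z)}$ and use $A\bar z=b$ to rewrite $\inner{q}{A(z-\bar z)} = \inner{q}{Az-b}$; Cauchy--Schwarz bounds $\inner{r}{z-\bar z}\le \|r\|D_h$. This yields $\bar d\|\xi\| \le 2D_hM_h + \|r\|D_h - \inner{q}{Az-b}$. To replace $\|\xi\|$ by $\|q\|$, I would use $\|\xi\| = \|A^*q - r\| \ge \|A^*q\| - \|r\| \ge \nu^+_A\|q\| - \|r\|$, the last inequality being Lemma~\ref{lem:linalg} (applicable since $q\in A(\Re^n)$). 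Multiplying by $\bar d>0$ and again using $\bar d\le D_h$ to absorb the term $\bar d\|r\|$ into $D_h\|r\|$ produces exactly \eqref{ineq:aux9001-2}.

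For part (b), I would substitute the update \eqref{q form} in the form $Az-b=(q-q^-)/\chi$ into the inner-product term of \eqref{ineq:aux9001-2}, so that $\inner{q}{Az-b} = \frac{1}{\chi}\bigl(\|q\|^2 - \inner{q}{q^-}\bigr)$. The argument then splits into two cases. If $\|q\|\le\|q^-\|$, the claimed bound holds trivially. Otherwise $\|q\|>\|q^-\|$, and Cauchy--Schwarz gives $\inner{q}{q^-}\le\|q\|\,\|q^-\|<\|q\|^2$, so the subtracted term $\frac{1}{\chi}(\|q\|^2-\inner{q}{q^-})$ is nonnegative and may be dropped from \eqref{ineq:aux9001-2}; dividing by $\bar d\nu^+_A$ then gives $\|q\|\le 2D_h(M_h+\|r\|)/(\bar d\nu^+_A)$. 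Combining the two cases yields \eqref{q bound-2}.

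I expect the main obstacle to be the bookkeeping in part (a) that converts the bound on $\|\xi\|$ produced by Lemma~\ref{lem:bound_xiN} into a bound on $\nu^+_A\|q\|$: because $\xi=r-A^*q$ entangles $r$ and $A^*q$, one must pass through the reverse triangle inequality and then rely on the crude but sufficient estimate $\bar d\le D_h$ to keep all $\|r\|$-contributions collected into the single factor $2D_h\|r\|$. The sign of $\inner{q}{Az-b}$ is the only delicate point in part (b), and it is handled cleanly by the case split above.
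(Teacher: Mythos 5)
Your proposal is correct and follows essentially the same route as the paper: part (a) applies Lemma~\ref{lem:bound_xiN} with $\xi=r-A^*q$, $u=\bar z$, $\delta=0$, rewrites $\inner{q}{A(z-\bar z)}=\inner{q}{Az-b}$ via $A\bar z=b$, and passes from $\|r-A^*q\|$ to $\nu_A^+\|q\|$ through the triangle inequality and Lemma~\ref{lem:linalg}, exactly as in the paper. Part (b) uses the same identity $\inner{q}{Az-b}=\tfrac1\chi(\|q\|^2-\inner{q}{q^-})$ and Cauchy--Schwarz; your case split on whether $\|q\|\le\|q^-\|$ is just a cosmetic repackaging of the paper's single inequality chain involving the quantity $K=\max\{\|q^-\|,\,2D_h(M_h+\|r\|)/(\bar d\nu_A^+)\}$.
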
	
\begin{proof}
(a) The assumption on $(z,q,r)$ implies that $r-A^{*}q \in \partial h(z)$. Hence, using the Cauchy-Schwarz inequality, the definitions of $\bar d$ and $\bar z$ in \eqref{phi* and lambda bound and bar d} and (A2), respectively, and
Lemma~\ref{lem:bound_xiN} with $\xi=r-A^{*}q$, $u=\bar z$, and $\delta=0$, we have:
 \begin{align}\label{first inequality p-2}
   \bar d\|r-A^{*}q\|-\left[\bar d+\|z-\bar z\|\right]M_h &\overset{\eqref{bound xi}}{\leq}  \inner{r-A^{*}q}{z-\bar z}\leq \|r\| \|z-\bar z\| - \inner{q}{Az-b}.
 \end{align}
 Now, using the above inequality,
 the triangle inequality, the definition of $D_h$ in (A1), and the facts that $\bar d \leq D_h$ and $\|z-\bar z\|\leq D_h$, we conclude that:
 \begin{align}\label{second inequality p-2}
 \bar d \|A^*q\| + \inner{q}{Az-b}
 &\overset{\eqref{first inequality p-2}}{\leq} \left[\bar d+\|z-\bar z\|\right]M_h + \|r\| \left(D_h + \bar d\right) \leq 2 D_h\left(M_h + \|r\| \right).
 \end{align}
 Noting the assumption that
$q \in A(\Re^n)$,
inequality
 \eqref{ineq:aux9001-2} now follows
from the above inequality
and
Lemma~\ref{lem:linalg}.

(b) Relation \eqref{q form} implies that $\inner{q}{Az-b}=\|q\|^2/\chi-\inner{q^-}{q}/\chi$, and hence that
\begin{equation}\label{q relation-2}
    \bar d \nu^{+}_A\|q\|+\frac{\|q\|^2}{\chi}\leq 2D_h(M_h+\|r\|)+\frac{\inner{q^-}{q}}{\chi}\leq 2D_h(M_h+\|r\|)+\frac{\|q\| }{\chi}\|q^-\|,
\end{equation}
where the last inequality is due to the Cauchy-Schwarz inequality.
Now, letting
$K$ denote the right hand side of \eqref{q bound-2}
and using \eqref{q relation-2},
we conclude that
\begin{equation}\label{prelim q bound-2}
\left(\bar d \nu^+_A+\frac{\|q\|}{\chi}    \right)\|q\|\overset{\eqref{q relation-2}}{\leq} \left(\frac{2D_h(M_h+\|r\|)}{K}+\frac{\|q\| }{\chi}\right)K\leq \left(\bar d \nu^+_A+\frac{\|q\|}{\chi}    \right) K,
\end{equation}
and hence that \eqref{q bound-2} holds.
\end{proof}

\end{appendices}
\scriptsize
\typeout{}
\bibliographystyle{plain}
\bibliography{Proxacc_ref}

\end{document}